\let\cal\mathcal
\newtheorem{theorem}[equation]{Theorem}
 \newtheorem{lemma}[equation]{Lemma}
 \newtheorem{proposition}[equation]{Proposition}
 \newtheorem{corollary}[equation]{Corollary}
\theoremstyle{definition}
\newtheorem{definition}[equation]{Definition}
\newtheorem{remark}[equation]{Remark}
\newtheorem{example}[equation]{Example}
\theoremstyle{remark}
\newtheorem*{acknowledgments}{Acknowledgments}
\def\invlim{\mathop{\vtop{\ialign{##\crcr$\hfill{\lim}\hfil$\crcr
\noalign{\kern1pt\nointerlineskip}\leftarrowfill\crcr\noalign
{\kern -3pt}}}}\limits}
\def\dirlim{\mathop{\vtop{\ialign{##\crcr$\hfill{\lim}\hfil$\crcr
\noalign{\kern1pt\nointerlineskip}\rightarrowfill\crcr\noalign
{\kern -3pt}}}}\limits} 
\def\lomapr#1{\smash{\mathop{\relbar\joinrel\longrightarrow}\limits^{#1}}}
\def\phi{\varphi} 
\def\epsilon{\varepsilon}
\newcommand{\ovk}{\overline{K} }
\newcommand{\dr}{\operatorname{dR} }
 \newcommand{\colim}{\operatorname{colim} }
 \newcommand{\proeet}{\operatorname{pro\acute{e}t} } 
 \newcommand{\eet}{\operatorname{\acute{e}t} }
 \newcommand{\Hom}{{\rm{Hom}} }
  \newcommand{\uHom}{\underline{{\rm{Hom}} }}
  \newcommand{\Hhom}{{\cal Hom}}
\newcommand{\st}{\operatorname{st} }
 \newcommand{\crr}{\operatorname{cr} }
   \newcommand{\LL}{\operatorname{L} }
     \newcommand{\FF}{\operatorname{FF} }
 \newcommand{\sff}{{\mathcal{F}}}
 \newcommand{\sh}{{\mathcal{H}}}
 \newcommand{\sg}{{\mathcal{G}}}
 \newcommand{\sv}{{\mathcal{V}}}
 \newcommand{\scc}{{\mathcal{C}}}
 \newcommand{\sll}{{\mathcal{L}}}
 \newcommand{\so}{{\mathcal O}}
 \newcommand{\se}{{\mathcal{E}}}
 \newcommand{\sa}{{\mathcal{A}}}
 \newcommand{\sx}{{\mathcal{X}}}
 \newcommand{\sss}{{\mathcal{S}}}
\newcommand{\sd}{{\mathcal{D}}} 
\newcommand{\sm}{{\mathcal{M}}} 
\newcommand{\spp}{{\mathcal{P}}}
 \newcommand{\wh}{\widehat}
   \numberwithin{equation}{section}
\def\R{{\mathrm R}}
  \def\B{{\bf B}}
\def\Q{{\bf Q}} \def\Z{{\bf Z}}
\def\epsilon{\varepsilon}
\numberwithin{equation}{section}
\begin{document}
\title[Topological Vector Spaces]
 {Topological Vector Spaces}
 \author{Pierre Colmez} 
\address{CNRS, IMJ-PRG, Sorbonne Universit\'e, 4 place Jussieu, 75005 Paris, France}
\email{pierre.colmez@imj-prg.fr} 
\author{Wies{\l}awa Nizio{\l}}
\address{CNRS, IMJ-PRG, Sorbonne Universit\'e, 4 place Jussieu, 75005 Paris, France}
\email{wieslawa.niziol@imj-prg.fr}
 \date{\today}
\thanks{The authors's research was supported in part by the  Simons Collaboration on Perfection in Algebra, Geometry, and Topology and by the grant ANR-19-CE40-0015-02 COLOSS}
\maketitle
 \begin{abstract}
 Motivated by applications to duality theorems 
for $p$-adic pro-\'etale cohomology of rigid analytic spaces, we study the category of Topological Vector Spaces in the setting of condensed mathematics.
 We prove that it contains, as  full subcategories, both  the category of (topologically) bounded algebraic Vector Spaces and   the category  of perfect complexes on the Fargues-Fontaine curve. Vector Spaces coming from $p$-adic pro-\'etale cohomology of smooth partially proper rigid analytic varieties are examples of sheaves belonging to the former category. 
 \end{abstract}
\tableofcontents
\section{Introduction} Let $p$ be a prime and let $K$ be a complete discrete valuation field with a perfect residue field, of mixed characteristic. Let $\ovk$ be an algebraic closure of $K$ and let $C=\wh{\ovk}$ be the $p$-adic completion of $\ovk$.  

 Topological Vector Spaces ({\rm TVS}, for short) were introduced in \cite{CN5} to geometrize $p$-adic period morphisms for rigid analytic varieties, i.e., to make them vary over the category of perfectoid affinoids over $C$.
 Once geometrized these period morphisms acquire rigidity that could be exploited to extract 
actual comparison theorems from the long exact sequence provided by the basic comparison theorems of~\cite{CN4}.
 Topological Vector Spaces were defined as enriched presheaves on sympathetic spaces over $C$ with values in locally convex vector spaces over $\Q_p$ (abelianized appropriately).
 The enrichment condition meant that we consider only presheaves with restriction maps sensitive to the topology on the mapping spaces between sympathetic spaces. 

    In this paper we study the condensed version of this category.
 This was mostly motivated  by the application to duality results in \cite{CGN2}, where the main examples of Topological Vector Spaces come from cohomology of  quasi-coherent sheaves on the Fargues-Fontaine curve hence live in the condensed universe.
 In the condensed version, locally convex topological vector spaces are replaced by condensed  (or solid) ${\Q}_p$-modules and the sympathetic spaces are replaced by strictly totally disconnected spaces.
 Roughly, this means that instead of  imposing that the \'etale cohomology of these spaces with values in  $\Q_p$   is trivial in degrees~$\geq 1$  we impose this triviality 
 for  any  \'etale sheaf.
  The enrichment condition puts us in the setting of  topologically enriched presheaves once we equip mapping spaces with a natural condensed structure.
 The key point of enrichment in this language is the enriched Yoneda Lemma (see Section 3.1.5), which allows us to work homologically with topologically enriched  presheaves in a way akin to algebraic presheaves.
 We note that this  fails for  non-enriched  topological presheaves. 
   
   Topological Vector Spaces are targets of two natural functors: one from Vector Spaces\footnote{That is, $\underline{\Q}_p$-sheaves on ${\rm Perf}_{C, \proeet}$, 
   the pro-\'etale site of perfectoid affinoids over $C$. } ({\rm VS}, for short) and the other one from quasi-coherent sheaves on the Fargues-Fontaine curve.
 The main result  of the paper is the following theorem (see Section \ref{lato25S} for the notation):
\begin{theorem}   Let $S\in {\rm sPerf}_C$. 
\begin{enumerate}
\item {\rm(Enriched fully-faithfulness)}  The canonical functor from Vector Spaces to Topological Vector Spaces
$$
\R\pi_*: \sd(S_{\proeet},\Q_p)\to \underline{\sd}(S,\Q_p)
$$
tends to be fully faithful. More precisely,  let  $\sff\in \sd^b(S_{\proeet},\Q_p)$ be such that $\R\pi_*\sff\in \underline{\sd}^b(S,\Q_p)$ and 
let $\sg\in \sd^+(S_{\proeet},\Q_p)$.  Then the canonical   morphism in $\underline{\sd}(S,\Q_p)$
$$
\R\pi_*\R\Hhom_{S,\Q_p}(\sff,\sg)\to \R\Hhom_{S^{\rm top},\Q_p}(\R\pi_*\sff,\R\pi_*\sg)
$$
is a quasi-isomorphism. 
\item  {\rm(Fargues-Fontaine fully-faithfulness)}  The functor from quasi-coherent sheaves on the Fargues-Fontaine curve\footnote{In the sense of Anreychev \cite{And21}.} to Topological Vector Spaces
$$
\R\tau_*: {\rm QCoh}(X_{\FF, S^{\flat}})\to \underline{\sd}(S,\Q_{p})
$$
is fully faithful when restricted to perfect complexes. That is, for $\sff,\sg\in {\rm Perf}(X_{\FF, S^{\flat}})$, the natural morphism in $\underline{\sd}(S,\Q_p)$
$$\R\tau_*\R\Hhom_{{\rm QCoh}(X_{\FF,S^{\flat}})}(\sff,\sg)\to \R\Hhom_{S^{\rm top},\Q_{p}}(\R\tau_*\sff,\R\tau_*\sg)
$$
is a quasi-isomorphism. 
\item  {\rm (Compatibility of the algebraic and topological projections)} The functor from quasi-coherent sheaves on the Fargues-Fontaine curve  to  Vector Spaces
$$
\R\tau^{\prime}_*: {\rm QCoh}(X_{\FF, S^{\flat}})\to {\sd}(S_{\proeet},\Q_{p})
$$
is compatible with the functor $\R\tau_*$ when restricted to nuclear sheaves. That is, the
 following diagram commutes
$$
\xymatrix{
 {\rm Nuc}(X_{\FF,S^{\flat}})\ar[r]^-{\R\tau^{\prime}_*}\ar[rd]^{\R\tau_*} &  \sd(S_{\proeet},{\Q}_p)\ar[d]^{\R\pi_*}\\
  & \underline{\sd}(S,{\Q}_p).
  }
  $$
\end{enumerate}
\end{theorem}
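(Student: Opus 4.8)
\emph{Proof sketch.} The three assertions share a common shape --- a canonical map between a ``source'' derived Hom complex and a ``target'' derived Hom complex is to be shown an equivalence (or, in part (3), a commutative square) --- and I would attack all of them by one mechanism: reduce the first argument to (a twist of) the monoidal unit, then identify the target derived Hom with evaluation at $S$ via the enriched Yoneda Lemma of Section~3.1.5, which gives $\R\Hhom_{S^{\rm top}}(\underline{\Q}_p,\sm)\simeq\sm$ and $\R\uHom_{S,\Q_p}(\underline{\Q}_p,\sm)\simeq\sm(S)$ for a Topological Vector Space $\sm$. The reductions are of three flavours: a pro-\'etale-local dévissage in part (1), duality in part (2), and an unwinding of definitions in part (3).

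\emph{Part (1).} Being right adjoint to the symmetric monoidal $\pi^*$, the functor $\R\pi_*$ is lax symmetric monoidal, hence transports evaluation and coevaluation of internal Homs and so produces the asserted natural map. I would first treat $\sff=\underline{\Q}_p$ (and its shifts): here $\R\Hhom_S(\underline{\Q}_p,\sg)\simeq\sg$, so the left side is $\R\pi_*\sg$; since a strictly totally disconnected space has no higher pro-\'etale cohomology with $\underline{\Q}_p$-coefficients, $\R\pi_*\underline{\Q}_p\simeq\underline{\Q}_p$ is the unit, and the enriched Yoneda Lemma gives $\R\Hhom_{S^{\rm top}}(\underline{\Q}_p,\R\pi_*\sg)\simeq\R\pi_*\sg$ as well, with the map the identity. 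For a general $\sff\in\sd^b(S_{\proeet},\Q_p)$ with $\R\pi_*\sff\in\underline{\sd}^b(S,\Q_p)$ I would run a dévissage along the finitely many cohomology sheaves of $\sff$, which are pro-\'etale-locally on $S$ constant, concluding by the five lemma since $\R\pi_*$, $\R\Hhom_S(-,\sg)$ and $\R\Hhom_{S^{\rm top}}(-,\R\pi_*\sg)$ are all exact and the natural transformation is compatible with the triangle structure. The point that forces the hedge ``tends to be fully faithful'' is that reaching the constant case requires pro-\'etale descent while the class of complexes with bounded pushforward is not stable under the relevant homotopy limits, so the dévissage must be carried out entirely within the stated hypotheses, controlling boundedness at each stage; this bookkeeping is the main work of part (1).

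\emph{Part (2).} A perfect complex on the relative Fargues--Fontaine curve is dualizable, so $\R\Hom_{{\rm QCoh}(X_{\FF,S^{\flat}})}(\sff,\sg)\simeq\R\Gamma(X_{\FF,S^{\flat}},\sff^\vee\otimes\sg)$. I would use that $\R\tau_*$ is symmetric monoidal (see Section~\ref{lato25S} for its construction and basic properties) and sends $\so$ to the unit $\underline{\Q}_p$ --- the latter because over a strictly totally disconnected base $\R\Gamma$ of the structure sheaf of the curve is $\underline{\Q}_p(T)$ concentrated in degree $0$, by Fargues--Fontaine and Kedlaya--Liu --- so that $\R\tau_*$ carries dualizable objects to dualizable objects with $(\R\tau_*\sff)^\vee\simeq\R\tau_*(\sff^\vee)$. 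Then
\begin{align*}
\R\uHom_{S,\Q_p}(\R\tau_*\sff,\R\tau_*\sg) & \simeq \R\uHom_{S,\Q_p}(\underline{\Q}_p,(\R\tau_*\sff)^\vee\otimes\R\tau_*\sg) \\
& \simeq \R\uHom_{S,\Q_p}(\underline{\Q}_p,\R\tau_*(\sff^\vee\otimes\sg)) \\
& \simeq (\R\tau_*(\sff^\vee\otimes\sg))(S) \simeq \R\Gamma(X_{\FF,S^{\flat}},\sff^\vee\otimes\sg),
\end{align*}
using dualizability, monoidality, the enriched Yoneda Lemma, and the identification of evaluation at $S$ of $\R\tau_*$ with quasi-coherent cohomology of the curve; it remains only to check that the canonical map is precisely this chain of identifications. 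If one prefers not to invoke monoidality of $\R\tau_*$ wholesale, the same computation can be run after reducing, by exactness of both sides and the ampleness of $\so(1)$, to $\sff=\so(n)$ and $\sg=\so(m)$, where one needs only compatibility of $\R\tau_*$ with tensoring by the line bundle $\so(1)$ together with an explicit description of the relative Banach--Colmez space $\R\tau_*\so(k)$ and of $\R\uHom$ between such objects; proving that compatibility --- a K\"unneth statement for the relative curve --- would then be the crux.

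\emph{Part (3).} Here I would unwind the two constructions. For $T\in{\rm sPerf}_S$ one has $(\R\pi_*\R\tau^{\prime}_*\sff)(T)=\R\Gamma(T_{\proeet},\R\tau^{\prime}_*\sff)$ and $(\R\tau_*\sff)(T)=\R\Gamma(X_{\FF,T^{\flat}},\sff)$, while $\R\tau^{\prime}_*\sff$ is by construction the pro-\'etale sheaf attached to $U\mapsto\R\Gamma(X_{\FF,U^{\flat}},\sff)$. Since a strictly totally disconnected $T$ is a point for the $v$-topology, the pro-\'etale cohomology of a $v$-sheaf on $T$ has no higher terms; for $\sff$ nuclear the cohomology sheaves of $\R\tau^{\prime}_*\sff$ have this property and the formation of $\R\tau^{\prime}_*\sff$ commutes with the base change $T^{\flat}\to S^{\flat}$, so $\R\Gamma(T_{\proeet},\R\tau^{\prime}_*\sff)\simeq\R\Gamma(X_{\FF,T^{\flat}},\sff)$ naturally in $T$, which yields the asserted commutative diagram up to comparison of condensed structures. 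That comparison --- that the condensed enrichment produced directly by $\R\tau_*$ agrees with the one transported through $\R\pi_*$ --- is the one genuine point, and it is exactly where nuclearity is indispensable: nuclear objects are pinned down by their ``discrete'' data, leaving no room for the two condensed structures to disagree. This comparison, together with the boundedness bookkeeping of part (1), is where I expect the real difficulty to lie.
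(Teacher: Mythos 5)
Your part (1) has a genuine gap at its core reduction. You propose a d\'evissage to the constant sheaf via the claim that the cohomology sheaves of $\sff$ are ``pro-\'etale-locally on $S$ constant''. That is false in the stated generality: the hypothesis is only $\sff\in \sd^b(S_{\proeet},\Q_p)$ with $\R\pi_*\sff$ bounded, and the intended examples ($ {\mathbb G}_a$, period sheaves, the complexes ${\mathbb R}^{\rm alg}_{\proeet,*}(X_C,\Q_p)$ of Corollary \ref{duck3}) are nowhere locally constant; $\underline{\Q}_p$ is not a generator of this category, so treating the unit case does not bootstrap. Moreover, even for genuinely locally constant coefficients you would still need a descent mechanism for $\R\Hhom_{S^{\rm top}}$ inside the enriched presheaf category, which is not available off the shelf (TVS's are presheaves). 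The paper's proof of Theorem \ref{duck1} goes another way: reduce $\sg$ to a single injective, pass to presheaves on ${\rm sPerf}_S$, resolve $\sff$ by the representable generators $\underline{\Q}_p[h_Z]$ (resp.\ $\Z[h_Z]$) for strictly totally disconnected $Z$, and conclude from the classical Yoneda Lemma on the algebraic side together with the \emph{enriched} Yoneda Lemma on the topological side, using $\pi^{\rm psh}_*\underline{\Q}_p[h_Z]\simeq \underline{\Q}_p[h^{\rm top}_Z]$; your Yoneda step survives only as the evaluation of the target on representables, not as the engine of the reduction.

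Part (2) also fails as written: $\R\tau_*$ is only \emph{lax} symmetric monoidal (via the relative cup product), not strong monoidal, and it does not preserve dualizability. For instance $\R\tau_*\so(1)\otimes^{\LL_{\Box}}_{\Q_p}\R\tau_*\so(1)\to \R\tau_*\so(2)$ is the map ${\mathbb B}^{+,\phi=p}_{\crr}\otimes{\mathbb B}^{+,\phi=p}_{\crr}\to{\mathbb B}^{+,\phi=p^2}_{\crr}$, which is far from an equivalence, and the images of perfect complexes (Banach--Colmez spaces, ${\mathbb G}_a$) are not dualizable in ${\rm TVS}$; so $(\R\tau_*\sff)^{\vee}\simeq\R\tau_*(\sff^{\vee})$ cannot be invoked --- if it could, fully-faithfulness would be formal, and the failure of strong monoidality is precisely the difficulty. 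Your fallback (computing $\R\uHom$ between relative Banach--Colmez spaces directly) is circular, since those computations are the content of the statement. The paper instead factors the comparison map through the \emph{algebraic} fully-faithfulness of Ansch\"utz--Le Bras, uses part (3) to identify $\R\tau_*$ with $\R\pi_*\R\tau'_*$ on the relevant nuclear objects, d\'evisses via the Euler sequence and $0\to\so\to\so(1)\to i_{\infty,*}R_S\to0$ down to $\{\so,\,i_{\infty,*}R_S\}$, whose pushforwards are $\underline{\Q}_p$ and ${\mathbb G}_a$, and finishes with part (1) in the form of Corollary \ref{duck2}; note that (2) is thus a consequence of (1), (3) and \cite{AnLB}, not an independent duality argument. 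Your part (3) is closest in outline, but the decisive input is misplaced: nuclearity is used, via \cite[Prop.\ 5.35]{And21}, to make the structure maps comparing $\R\Gamma(X_{\FF,(Y\times T)^{\flat}},\sff)$ with the cotensor $\R\uHom_{\Q_{p,\Box}}(\Q_p[T]^{\Box},\R\Gamma(X_{\FF,Y^{\flat}},\sff))$ into equivalences, and the enrichment comparison is an explicit cocycle identity $\delta_{Y,T}\delta_{Y\times T,T'}=\delta_{Y,T\times T'}$ on projective resolutions --- not a general principle that nuclear objects are determined by their discrete data.
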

The first claim of the theorem  states that the canonical functor  from Vector Spaces to Topological Vector Spaces is fully faithful (in the enriched sense) once one imposes certain finiteness  conditions. This follows from the fact that algebraic Yoneda sheaves associated to strictly totally disconnected spaces map to the topological ones and we have Yoneda Lemmas in both settings. The second claim is a topological analog of a result of Ansch\"utz-Le Bras from \cite{AnLB} and follows from the latter, the third claim of the theorem,  and the following immediate corollary of the first claim of the theorem: 
\begin{corollary}
Let $\sff,\sg\in \{{\mathbb G}_a, \underline{\Q}_p\}$.  Then we have a natural quasi-isomorphism
$$
\R\pi_*\R\Hhom_{S,\Q_p}(\sff,\sg)\stackrel{\sim}{\to}\R\Hhom_{S^{\rm top},\Q_p}(\sff,\sg).
$$
\end{corollary}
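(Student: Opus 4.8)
The plan is to deduce the Corollary formally from part (1) of the Theorem, applied with both arguments ranging over $\{{\mathbb G}_a,\underline{\Q}_p\}$. Since ${\mathbb G}_a$ and $\underline{\Q}_p$ are sheaves concentrated in degree $0$, they lie in $\sd^b(S_{\proeet},\Q_p)\subset \sd^+(S_{\proeet},\Q_p)$, so the only hypothesis of part (1) that is not automatic is the condition $\R\pi_*\sff\in\underline{\sd}^b(S,\Q_p)$ imposed on the first argument. Hence the first step is to check this boundedness for $\sff={\mathbb G}_a$ and for $\sff=\underline{\Q}_p$.

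For this I would appeal to acyclicity on the pro-\'etale site of strictly totally disconnected spaces. Since $S\in{\rm sPerf}_C$ and every object \'etale over $S$ is again strictly totally disconnected, each \'etale (hence pro-\'etale) cover of such an object splits, which makes the higher pro-\'etale cohomology of $\underline{\Q}_p$ vanish; and the (almost) acyclicity of the structure sheaf on affinoid perfectoids, after inverting $p$, makes the higher pro-\'etale cohomology of ${\mathbb G}_a$ vanish as well. Therefore $\R\pi_*\underline{\Q}_p\simeq\underline{\Q}_p$ and $\R\pi_*{\mathbb G}_a\simeq{\mathbb G}_a$, each concentrated in degree $0$ and in particular lying in $\underline{\sd}^b(S,\Q_p)$; moreover these identifications say precisely that $\R\pi_*$ sends ${\mathbb G}_a$ and $\underline{\Q}_p$ to their topological avatars over $S^{\rm top}$.

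Granting this, part (1) of the Theorem applies to any pair $\sff,\sg\in\{{\mathbb G}_a,\underline{\Q}_p\}$ and produces a quasi-isomorphism $\R\pi_*\R\Hhom_{S}(\sff,\sg)\stackrel{\sim}{\to}\R\Hhom_{S^{\rm top}}(\R\pi_*\sff,\R\pi_*\sg)$ in $\underline{\sd}(S,\Q_p)$. Substituting $\R\pi_*{\mathbb G}_a\simeq{\mathbb G}_a$ and $\R\pi_*\underline{\Q}_p\simeq\underline{\Q}_p$ rewrites the target as $\R\Hhom_{S^{\rm top},\Q_p}(\sff,\sg)$, which is the stated map; a final bookkeeping point I would verify is that this map indeed coincides with the canonical comparison morphism named in the statement under these identifications.

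The only genuinely delicate point is the boundedness of $\R\pi_*{\mathbb G}_a$: one must be sure the acyclicity holds uniformly enough over the relevant strictly totally disconnected spaces to give vanishing as a complex of condensed $\underline{\Q}_p(S)$-modules, not merely sectionwise. This is exactly where the hypothesis $S\in{\rm sPerf}_C$ is used; everything else is formal, which is why the statement is labeled an immediate corollary of part (1).
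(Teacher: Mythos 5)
Your proposal is correct and is essentially the paper's own argument: the corollary is deduced from part (1) of the Theorem by invoking the $\pi_*$-acyclicity of ${\mathbb G}_a$ and $\underline{\Q}_p$ (Lemma \ref{acyclic1}) together with the identifications $\pi_*{\mathbb G}_a\simeq{\mathbb G}_a^{\rm top}$ and $\pi_*\underline{\Q}_p\simeq\underline{\Q}_p$ from Example \ref{ex1}, exactly as you do. One minor imprecision worth fixing: strict total disconnectedness only makes \emph{\'etale} covers split (splitting of pro-\'etale covers is w-contractibility), so the vanishing of higher pro-\'etale cohomology of $\underline{\Q}_p$ on $Y\times T$ should be justified, as in the proof of Lemma \ref{acyclic1}, by d\'evissage to $\Z/p^n$-coefficients, comparison of pro-\'etale with \'etale cohomology, and a limit/quasi-compactness argument, rather than directly from splitting of pro-\'etale covers.
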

\begin{remark} (1) The above theorem and corollary mean that we can compute extensions of Banach-Colmez spaces or on the Fargues-Fontaine curve, or in {\rm VS}'s, or in {\rm TVS}'s. Hence, in $\underline{\sd}(S,\Q_{p})$:
\begin{align*}
& \R\Hhom_{S^{\rm top},\Q_p}(\Q_p,\Q_p)  \stackrel{\sim}{\leftarrow}\Q_p,\quad \R\Hhom_{S^{\rm top},\Q_p}(\Q_p,{\mathbb G}_a)\stackrel{\sim}{\leftarrow}{\mathbb G}_a,\\
&  \R\Hhom_{S^{\rm top},\Q_p}({\mathbb G}_a,{\mathbb G}_a)  \simeq{\mathbb G}_a\oplus{\mathbb G}_a[-1],\quad 
  \R\Hhom_{S^{\rm top},\Q_p}({\mathbb G}_a,\Q_p)  \simeq{\mathbb G}_a(-1)[-1].
\end{align*}

More generally, the first claim of the theorem can be applied to complexes of {\rm VS}'s representing pro-\'etale cohomology of smooth partially proper rigid analytic varieties (see Corollary \ref{duck3}). 

(2) The second claim of the theorem holds in greater generality than stated  though we did not find a clean general statement. 

 (3) In the applications to geometric dualities in \cite{CGN2} we work mostly with the solid version of {\rm TVS}'s, which allows us to do computations 
objectwise in the category of solid $\Q_p$-vector spaces. 
 
 (4) There is also a solid version of {\rm VS}'s introduced by Fargues-Scholze  in \cite{FS} (see also \cite{ALBM}). Certainly, there is a close relationship between these versions of {\rm VS}'s and {\rm TVS}'s but we do not study it here. 
\end{remark}
 \subsubsection*{Organization of the paper.}  
We discuss three categories of (pre)sheaves on the category of perfectoid affinoids over $C$ that expand the
category of Banach-Colmez spaces in various directions, by removing finiteness conditions:

-- The {\rm VS}--category (where {\rm VS} stands for Vector Spaces), whose objects are sheaves of 
$\underline{\Q}_p$-vector spaces (thus without topology) on perfectoid affinoids over $C$ equipped with pro-\'etale topology. This topology "encodes" a topology on the {\rm VS}'s.

-- The {\rm NTVS}--category (where {\rm NTVS} stands for Naive Topological Vector Spaces), 
whose objects are (pre)sheaves of topological $\underline{\Q}_p$-vector spaces; ("topological" in the sense of Clausen-Scholze) on the category of perfectoid affinoids over $C$ equipped with pro-\'etale topology. There is a canonical projection from the {\rm VS}-category to the {\rm NTVS}-category: the objectwise topology is induced from the topology 
encoded by  the pro-\'etale site.

-- The {\rm TVS}--category (where {\rm TVS} stands for Topological Vector Spaces), which is the  category of topologically enriched 
 presheaves in  {\rm NTVS}'s: we use  the natural topology on the $\Hom$-spaces between perfectoid spaces to impose a continuity property on the presheaf functors. To make this work we need to restrict the presheaf functors from perfectoid affinoids to strictly totally disconnected ones. There is a canonical projection from the {\rm VS}-category to the {\rm TVS}-category: the objectwise topology is induced from the topology 
encoded by  the pro-\'etale site and such objects are canonically enriched.

  The key point to keep in mind is that the categories {\rm VS}  and {\rm TVS} have  Yoneda Lemmas and suitable  generating (projective) Yoneda objects correspond under the canonical projection from {\rm VS} to {\rm TVS}. 
  
  The category {\rm VS} was studied before; here we introduce the categories {\rm NTVS} and {\rm TVS}. This is done at a rather pedestrian pace allowing us to get familiar with these new objects. Section 2 is devoted to {\rm VS}'s and {NTVS}'s, Section 3 to {\rm TVS}'s, and Section 4 to fully-faitfulness.

\begin{acknowledgments}  We are  immensely  grateful to Akhil Mathew for many hours of discussions on the  constructions presented in this paper. 
 We would also like to thank 
Johannes Ansch\"utz for helpful comments on a draft of this paper and  Ko Aoki, Guido Bosco, Juan Esteban Rodriguez Camargo,  Dustin Clausen, David Hansen, Shizhang Li, Zhenghui Li,  Lucas Mann,  Emanuel Reinecke, Peter Scholze, and Bogdan Zavyalov for  discussions concerning the content of this paper. 

Parts of this paper were written during the  authors' stay at the Hausdorff Research Institute for Mathematics in Bonn, in the Summer of 2023, and at IAS at Princeton in the Spring 2024. We would like to thank these Institutes for their support and hospitality. 
\end{acknowledgments}

 \subsubsection*{Notation and conventions.}\label{Notation} Let $p$ be a fixed prime, $\Q_p$ -- the $p$-adic rational numbers. 
 Let $K$ be a complete discrete valuation field with a perfect residue field, of mixed characteristic. Let $\ovk$ be an algebraic closure of $K$ and let $C=\wh{\ovk}$ be the $p$-adic completion of $\ovk$.  
We will denote by $ \B_{\crr}, \B_{\st},\B_{\dr}$ the crystalline, semistable, and  de Rham period rings of Fontaine. 

 The category of affinoid perfectoid spaces over an affinoid perfectoid space $S$ over $C$ will be denoted by  ${\rm Perf}_S$. The pro-\'etale site of $S$ will be  the category  ${\rm Perf}_S$ endowed with the quasi-pro-\'etale topology and will be denoted by $S_{\proeet}$.   

   We denote by  ${\rm Cond}$, ${\rm CondAb}$, ${\rm Solid}$ the condensed sets, condensed abelian groups, and solid abelian groups, respectively. To simplify arguments, we fix a cut-off cardinal $\kappa$ and assume that  perfectoid spaces and condensed sets are $\kappa$-small.

\section{Naive Topological Vector Spaces}\label{naive1}  We introduce and study here the category of Naive Topological Vector Spaces. This is a category of topological sheaves, a simpler version of the category of Topological Vector Spaces that we are mostly interested in retaining however many of its properties hence allowing us a gentler introduction to our main object of study. 

   We start with the  introduction of topological pro-\'etale site and examples of sheaves on it. Then we introduce the {\rm NTVS}-category of sheaves with values in condensed and solid abelian groups. We prove that they form Grothendieck abelian categories, study their $\infty$-derived categories, the solidification functors, and monoidal structures. Finally, we extend all of this to the categories of $\underline{\Q}_p$-modules.
\subsection{Topological pro-\'etale site}\label{defnaive} In this section, we introduce  pro-\'etale topological sheaves. 
\subsubsection{Totally disconnected spaces} This brief review is based on \cite[Sec. 7]{SchD} and \cite{MW}. 
\begin{itemize}
\item A perfectoid space $X$ is {\em  totally disconnected}  (see \cite[Def. 7.1, Lemma 7.2, Lemma 7.3]{SchD}) if it is qcqs and every  open cover splits. Equivalently, if all connected components of $X$ are of the form
${\rm Spa}(K,K^+)$ for some perfectoid field $K$ with an open and bounded valuation subring $ K^+\subset K$.
 Also equivalently, 
if for all sheaves of abelian groups $\sff$ on $X$ , one has $H^i(X,\sff)=0$, for $i>0$. 
 By \cite[Lemma 7.5]{SchD}, a totally disconnected space $X$ is always an affinoid. 

  A perfectoid space $X$ is {\em w-local} (see \cite[Def. 7.4]{SchD}) if it is  totally disconnected  and the subset $X^c\subset X$ of closed points is closed.
  
 \item  A perfectoid space $X$ is {\em strictly totally disconnected} (see \cite[Def. 7.15]{SchD}) if it is qcqs and every \'etale cover splits. Equivalently, if every connected component is of the form ${\rm Spa}(K,K^+)$, where $K$ is algebraically closed. We note that every profinite set $T$ is strictly totally disconnected and that, by \cite[Lemma 7.19]{SchD}, the product $X\times T$ of a strictly totally disconnected space and a profinite set is strictly totally disconnected. 
 
   More generally (see \cite[Def. 7.17]{SchD},  a {\em w-strictly local } perfectoid space is a w-local perfectoid space $X$ such that for
all $x \in X$, the completed residue field $K(x)$ is algebraically closed.

 \item  A  {\em w-contractible space} (see \cite[Def. 1.1]{MW}) is a w-strictly local perfectoid space $X$ such that $\pi_0(X)$
is extremally disconnected. By \cite[Lemma 1.2]{MW}, if $X$ is a diamond,  the w-contractible spaces in  $X_{\proeet}$ form a basis of
$X_{\proeet}$. Moreover, if $U \in X_{\proeet}$ is w-contractible then  every pro-\'etale cover of $ U$ admits a section. In particular, for any sheaf $\sff$ of abelian groups
on $X_{\proeet}$ we have $H^i(U_{\proeet}, \sff) = 0$, for $i >0$, and $X_{\proeet}$ is replete.
\end{itemize}
\subsubsection{Topological pro-\'etale site} \label{naive-521} Let $S$  be an affinoid perfectoid  over $C$.  Let $S_{\proeet}^{\rm top}:=S_{\proeet}\times *_{\proeet}$ be the topological pro-\'etale site of $S$: its objects are pairs $(Y,T),$ where $Y\in{\rm Perf}_S,$ and $T$ is  a profinite set, and pro-\'etale coverings are given by maps $(Y^{\prime}, T^{\prime})\to (Y,T)$, where $Y^{\prime}\to Y$ is a  pro-\'etale covering and $T^{\prime}\to T$ is a surjective map.  We note that sheaves of sets on $S^{\rm top}_{\proeet}$ form the category ${\rm Sh}(S_{\proeet},{\rm Cond})$ of sheaves on $S_{\proeet}$ with values in ${\rm Cond}$. Similarly for sheaves of abelian groups: we get the category ${\rm Sh}(S_{\proeet},{\rm CondAb})$. 

  We have maps of sites 
$$
S_{\proeet}\lomapr{\pi} S_{\proeet}^{\rm top}\lomapr{\eta} S_{\proeet}
$$
given  by $Y\times \wh{T}\leftmapsto (Y,T)$,
$(Y,*)\leftmapsto Y$, respectively. We note that $\eta\pi={\rm Id}$.  Here $\wh{T}={\rm Spa}(\scc(T,C),\scc(T,\so_C))$ is the adic space (an affinoid perfectoid) associated to $T$. (We will drop the hat from $T$ if this does not cause confusion). 
We define the following pushforward functors $\pi_*: {\rm Sh}(S_{\proeet})\to {\rm Sh}(S^{\rm top}_{\proeet}), \eta_*: {\rm Sh}(S^{\rm top}_{\proeet})\to {\rm Sh}(S_{\proeet})$: 
\begin{align*}
& \pi_*\sff: \{(Y,  {T})\mapsto \sff(Y\times {T})\},\quad \sff\in{\rm Sh}(S_{\proeet});\\
& \eta_*\sff: \{Y\mapsto  \sff(Y,*)\},\quad  \sff\in{\rm Sh}(S^{\rm top}_{\proeet}).
\end{align*}
We have $\eta_*\pi_*={\rm Id}_*$. Thus the functor $\pi_*$ is faithful. 

If $X$ is a topological space, we use the standard notation $\underline{X}$ 
to denote the associated condensed set, i.e., the sheaf
$T\mapsto{\cal C}(T,X)$ on $*_{\proeet}$. 

    \begin{example} \label{ex1} We present some 
  examples of topological sheaves on $S\in {\rm Perf}_C$.
  
 (1)  {\em The constant sheaf $\underline{\Q}_p$.} Recall that the constant sheaf $\underline{\Q}_p$ on $S_{\proeet}$ is defined via 
 $$\underline{\Q}_p: \{Y\mapsto {\scc(|Y|,\Q_p)}\}.$$ One can see that, a priori just a presheaf,  it is actually a sheaf (see  \cite[Lemma 3.14]{MW}). We define the presheaf on $S_{\proeet}^{\rm top}$
 $$\underline{\Q}_p: \{Y\mapsto \underline{\scc(|Y|,\Q_p)}\},$$
 where  $\scc(|Y|,\Q_p)$ is equipped with the compact open topology.
 We claim that  $\pi_*\underline{\Q}_p\simeq\underline{\Q}_p$ (as presheaves for now). Indeed, we compute:
\begin{align*}
& \pi_*\underline{\Q}_p: \{(Y,T)\mapsto \underline{\Q}_p(Y\times {T})=\scc(|Y\times {T}|,\Q_p)\},\\
& \underline{\Q}_p: \{(Y,T)\mapsto \scc(T, {{\scc(|Y|,\Q_p)}})\}.
\end{align*}
But, since $|Y|\times {T}\stackrel{\sim}{\to}|Y\times {T}|$,  the natural map 
$$
\scc(|Y\times {T}|,\Q_p)\to \scc(T, { {\scc(|Y|,\Q_p)}})
$$
is a bijection\footnote{Since $\Q_p$ is Hausdorff, the exponential law of mapping space with compact-open topology holds for locally compact (not necessary
Hausdorff spaces).}.   It follows that $\underline{\Q}_p$ is a sheaf (since so is $\pi_*\underline{\Q}_p$). 

We note that the value $\underline{\Q}_p(Y)\in{\rm Solid}$. Indeed, if $Y$ is a strictly totally disconnected\footnote{Recall that this means that any {\em \'etale} covering has a section.} perfectoid space then we have (see the proof of \cite[Cor. 3.15]{MW} for an argument)
 $$\underline{\Q}_p(Y)=\underline{\scc(|Y|,\Q_p)}\stackrel{\sim}{\leftarrow} \underline{\scc(\pi_0(Y),\Q_p)},
 $$ a Banach space since $\pi_0(Y)$ is profinite. Our claim now follows because solid abelian groups are closed under taking kernels.

  If $W$ is  a Hausdorff  locally convex topological vector space  over $\Q_p$, we can analogously 
  define constant sheaves $\underline{W}$ on $S_{\proeet}^{\rm top}$. 
  We have that $\underline{W}(Y)\in {\rm Solid}$.   In what follows, if this does not cause confusion, we simply write $W$ in place of $\underline{W}$. 
  Similarly, on $S_{\proeet}$ and $S_{\proeet}^{\rm top}$ , if $W$ is profinite, we have the constant sheaf $\underline{W}$; on $S_{\proeet}$ it  is represented by the affinoid perfectoid space $\wh{W}$. 
  
 (2) {\em The sheaf ${\mathbb G}_a$.}  On $S_{\proeet}$ and $S_{\proeet}^{\rm top}$ we have the presheaves 
 $${\mathbb G}_a: \{Y={\rm Spa}(R_Y,R^+_Y)\mapsto {R_Y}\},\quad {\mathbb G}^{\rm top}_a: \{Y={\rm Spa}(R_Y,R^+_Y)\mapsto \underline{R}_Y\},
 $$ respectively,  where $R_Y$ is equipped with its natural  Banach topology.  The first presheaf is a sheaf by \cite[Th. 8.7]{SchD}. 
 We claim that  $\pi_*({\mathbb G}_a)\simeq {\mathbb G}^{\rm top}_a$ (as presheaves). Indeed, we have
 \begin{align*}
 \pi_*{\mathbb G}_a:\{(Y,T)\mapsto {\mathbb G}_a(Y\times {T})\},\quad 
  {{\mathbb G}^{\rm top}_a}: \{(Y,T)\mapsto \scc(T,R_Y)\}.
 \end{align*}
 But, since $R_Y$ is a Banach space, by \cite[Cor. 10.5.4]{PGS},  we have a natural isomorphism
 $$
 \scc(T,R_Y)\stackrel{\sim}{\leftarrow} {\mathbb G}_a(Y\times {T})=\underline{R}_Y{\otimes}^{\Box}_{\Q_p}\scc(T,\Q_p).
 $$
 It follows that ${\mathbb G}^{\rm top}_a$ on $S^{\rm top}_{\proeet}$ is a sheaf (as so is $\pi_*{\mathbb G}_a$). We will skip the superscript $(-)^{\rm top}$ if this does not cause confusion.

  (3) {\em Period sheaves.}   For $i\geq 0$, on $S_{\proeet}^{\rm top}$ we have the presheaves 
 \begin{equation}\label{periods1}
 \mathbb{B}^+_{\dr}/t^i :\{Y\mapsto { \B}^+_{\dr}(Y)/t^i\}, \quad \mathbb{B}^{+,\phi=p^i}_{\crr}: \{Y\mapsto {\B}^{+,\phi=p^i}_{\crr}(Y)\},
 \end{equation}
 where the (objectwise)  period rings are equipped with their canonical topologies. 
  These are sheaves of (solid) abelian groups. Indeed, for the first presheaf this follows,  by d\'evissage on $i$, from the second example discussed above; for the second one -- from the exact sequence of presheaves
  \begin{equation}\label{warsz11}
  0\to \underline{\Q}_p(i)\to\mathbb{B}^{+,\phi=p^i}_{\crr}\to  \mathbb{B}^+_{\dr}/t^i\to0
  \end{equation}
and the two examples above. 
  
 Similarly, we have period sheaves $\mathbb{B}^+_{\dr}/t^i$ and $\mathbb{B}^{+,\phi=p^i}_{\crr}$ on $S_{\proeet}$ (by forgetting the topology on the period rings in \eqref{periods1}). We claim that there are natural isomorphisms
\begin{equation}\label{warsz10}
\pi_*\mathbb{B}^+_{\dr}/t^i\stackrel{\sim}{\leftarrow}\mathbb{B}^+_{\dr}/t^i, \quad \pi_*\mathbb{B}^{+,\phi=p^i}_{\crr}\stackrel{\sim}{\leftarrow}\mathbb{B}^{+,\phi=p^i}_{\crr}.
 \end{equation}
 Indeed, we have 
 $$
 \pi_*(\mathbb{B}^+_{\dr}/t^i)=\{(Y,T)\mapsto \mathbb{B}^+_{\dr}(Y\times T)/t^i\}, \quad \pi_*\mathbb{B}^{+,\phi=p^i}_{\crr}
 =\{(Y,T)\mapsto \mathbb{B}^+_{\crr}(Y\times T)^{\phi=p^i}\}
 $$
 and the first map in \eqref{warsz10} can be defined as the composition
 \begin{align*}
  \mathbb{B}^+_{\dr}(Y\times T)/t^i =  \mathbb{B}^+_{\dr}(R_{Y\times T})/t^i \stackrel{\sim}{\to}\mathbb{B}^+_{\dr}(\scc(T, R_Y))/t^i\stackrel{\sim}{\to} \scc(T,  \mathbb{B}^+_{\dr}( R_Y)/t^i)=\scc(T,  \mathbb{B}^+_{\dr}(Y)/t^i).
 \end{align*}
 Here  the third map  is constructed by  going back to the definitions of  period rings; it is seen to be an isomorphism by d\'evissage reducing it to the case of $i=1$, which was shown above. We argue 
  similarly for $\mathbb{B}^{+,\phi=p^i}_{\crr}$.
  \end{example}

  The following lemma shows that  the sheaves $\underline{\Q}_p, {\mathbb G}_a$ and the period sheaves 
  $\mathbb{B}^+_{\dr}/t^i$, $\mathbb{B}^{+,\phi=p^i}_{\crr}$ are acyclic for the functor $\pi_*$:
   \begin{lemma} \label{acyclic1}
Let $\sff\in\{\underline{\Q}_p, {\mathbb G}_a,  \mathbb{B}^+_{\dr}/t^i, \mathbb{B}^{+,\phi=p^i}_{\crr}\}$ be a sheaf on $S_{\proeet}$. We have 
 $\R^i\pi_*\sff=0$, for $i>0$.  
 In particular, we have a  natural quasi-isomorphism 
 $\pi_*\sff\stackrel{\sim}{\to} \R\pi_*\sff$
in $\sd(S_{\proeet},{\rm CondAb})$.
 \end{lemma}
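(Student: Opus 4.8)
The plan is to describe $\R^q\pi_*\sff$ concretely and then show it vanishes on a suitable basis. For a morphism of sites, $\R^q\pi_*\sff$ is the sheaf on $S_{\proeet}^{\rm top}$ associated to the presheaf $(Y,T)\mapsto H^q_{\proeet}(Y\times\wh T,\sff)$: indeed, if $\sff\to I^\bullet$ is an injective resolution in ${\rm Sh}(S_{\proeet},{\rm Ab})$, then $(\pi_*I^\bullet)(Y,T)=I^\bullet(Y\times\wh T)$, and, restriction to the localized site being exact and injective-preserving, $I^\bullet|_{Y\times\wh T}$ computes $\R\Gamma_{\proeet}(Y\times\wh T,\sff)$. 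To see this associated sheaf is $0$ for $q>0$ it is enough, by the usual characterization of the zero sheaf, to observe that every object $(Y,T)$ of $S_{\proeet}^{\rm top}$ is covered by pairs $(U,T')$ with $U$ a w-contractible perfectoid space (the w-contractible spaces form a basis of $S_{\proeet}$ by \cite[Lemma 1.2]{MW}, and the profinite component may be left unchanged), so that any local section dies after refining to such a covering. Thus it suffices to prove $H^q_{\proeet}(U\times\wh T,\sff)=0$ for $q>0$, $U$ w-contractible, $T$ profinite.

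Set $Z:=U\times\wh T$; this is an affinoid perfectoid which is strictly totally disconnected by \cite[Lemma 7.19]{SchD}. For $\sff={\mathbb G}_a$, the vanishing $H^q_{\proeet}(Z,{\mathbb G}_a)=0$ ($q>0$) is the acyclicity of the structure sheaf on affinoid perfectoids in the (pro-)\'etale topology (\cite[Th.~8.7]{SchD}) and needs nothing of $Z$ beyond being affinoid perfectoid. For $\sff=\underline{\Q}_p$, one has $H^q_{\proeet}(Z,\underline{\Q}_p)=0$ ($q>0$) on any strictly totally disconnected $Z$; this is essentially \cite[Cor.~3.15]{MW}. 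A direct argument: write $\underline{\Q}_p=\underline{\Z_p}[1/p]$ with $\underline{\Z_p}\simeq\R\lim_n\underline{\Z/p^n}$ (repleteness of the pro-\'etale site); since $\R\Gamma_{\proeet}(Z,\underline{\Z/p^n})=\R\Gamma_{\eet}(Z,\underline{\Z/p^n})$ is concentrated in degree $0$ with surjective transition maps (every \'etale cover of $Z$ splits), the $\R\lim$ is an ordinary limit in degree $0$, and inverting $p$ commutes with $\R\Gamma$ on the qcqs space $Z$.

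This settles $\sff\in\{\underline{\Q}_p,{\mathbb G}_a\}$, and the two remaining sheaves follow by d\'evissage using the long exact sequences of $\R^\bullet\pi_*$ attached to short exact sequences of sheaves on $S_{\proeet}$. The d\'evissage $0\to{\mathbb G}_a(i-1)\to\mathbb{B}^+_{\dr}/t^i\to\mathbb{B}^+_{\dr}/t^{i-1}\to0$ (a graded piece of the $t$-adic filtration), with $\mathbb{B}^+_{\dr}/t\simeq{\mathbb G}_a$, gives $\R^q\pi_*(\mathbb{B}^+_{\dr}/t^i)=0$ for $q>0$ by induction on $i$ -- the Tate twist of ${\mathbb G}_a$ being non-canonically trivial over $C$. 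Then the exact sequence \eqref{warsz11}, together with the cases of $\underline{\Q}_p$ and $\mathbb{B}^+_{\dr}/t^i$, gives $\R^q\pi_*\mathbb{B}^{+,\phi=p^i}_{\crr}=0$ for $q>0$. Finally, $\R^q\pi_*\sff=0$ for $q>0$ means $\R\pi_*\sff$ is concentrated in degree $0$, where it equals $\R^0\pi_*\sff=\pi_*\sff$; hence $\pi_*\sff\to\R\pi_*\sff$ is a quasi-isomorphism in $\sd(S_{\proeet},{\rm CondAb})$.

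I expect the only real obstacle to be the case $\sff=\underline{\Q}_p$. The spaces $U\times\wh T$ arising in the reduction are strictly totally disconnected but \emph{not} in general w-contractible, so one cannot simply invoke vanishing of all abelian-sheaf cohomology and must instead use the specific structure of $\underline{\Q}_p$ -- after inverting $p$, a derived limit of torsion locally constant sheaves, for which pro-\'etale and \'etale cohomology agree. This is also why the lemma is stated only for these four sheaves: they are exactly the ones assembled from $\underline{\Q}_p$ and ${\mathbb G}_a$ by the above d\'evissages. Everything else is routine bookkeeping.
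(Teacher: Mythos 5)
Your proof is correct and follows essentially the same route as the paper: reduce, via the description of $\R^q\pi_*$ as a sheafified presheaf, to the vanishing of $H^q_{\proeet}(Z,\sff)$ for $Z=Y\times \wh{T}$ strictly totally disconnected, handle ${\mathbb G}_a$ by Tate acyclicity and $\underline{\Q}_p$ by $\underline{\Q}_p=\underline{\Z}_p[1/p]$, $\R\lim_n$, and comparison with \'etale cohomology of ${\Z}/p^n$, then do the $t$-adic and \eqref{warsz11} d\'evissages. Your extra care with the $\R^1\lim$ term and the sheafification step only makes explicit what the paper leaves implicit (the paper reduces directly to strictly totally disconnected $Y$ and cites \cite[Prop.~8.8]{SchD} for the ${\mathbb G}_a$ case), so there is nothing to change.
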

 \begin{proof}
 It suffices to show that 
 \begin{equation}\label{vanish1}
 H^i_{\proeet}(Y\times T,\sff)=0, \quad i>0, 
 \end{equation}
  for a strictly totally disconnected perfectoid space $Y$ over $C$ and a profinite set $T$.  
(We note that $Y\times T$ is a strictly totally disconnected perfectoid.) 

  If $\sff={\mathbb G}_a$ \eqref{vanish1} follows  from Tate acyclicity for perfectoid affinoid spaces for pro-\'etale topology (see \cite[Prop. 8.8]{SchD}). By d\'evissage it yields the result for
   $\mathbb{B}^+_{\dr}/t^i$. 
  
 If $\sff=\underline{\Q}_p$,   we compute (set $Z:=Y\times T$):
 \begin{align*}
 \R\Gamma_{\proeet}(Z,\underline{\Q}_p) & \stackrel{\sim}{\leftarrow} \R\Gamma_{\proeet}(Z,\underline{\Z}_p)[1/p],\quad 
  \R\Gamma_{\proeet}(Z,\underline{\Z}_p)\stackrel{\sim}{\to} \R\lim_n\R\Gamma_{\proeet}(Z,{\Z}/p^n),\\
 \R\Gamma_{\proeet}(Z,{\Z}/p^n) & \stackrel{\sim}{\leftarrow} \R\Gamma_{\eet}(Z,{\Z}/p^n).
 \end{align*}
 The first quasi-isomorphism holds  because $Y$ is quasi-compact; the third quasi-isomorphism follows from \cite[Prop. 14.7, Prop. 14.8]{SchD}. And $H^i_{\eet}(Z,{\Z}/p^n)$, $i>0$, because $Z$ is strictly totally disconnected. This yields \eqref{vanish1}. 
 
  Combining the above with the fundamental exact sequence \eqref{warsz11}, we get the claim of the lemma for $\mathbb{B}^{+,\phi=p^i}_{\crr}$. 
 \end{proof}
\subsection{Naive Topological Vector Spaces}  \label{naive52} We will discuss here the topological sheaves on $S_{\proeet}$, for $S\in {\rm Perf}_C$,  in the condensed and solid settings. 

\subsubsection{Condensed and solid modules} \label{naive1a} For the convenience of the reader we will recall  briefly  the basic properties of condensed and solid modules. 

\begin{proposition}{\rm (Clausen-Scholze \cite[Prop. 7.5]{Sch19})} \label{nyear1} Let $A$ be an   analytic ring with an underlying condensed ring $\underline{A}$. 
\begin{enumerate}
 \item  The full subcategory of solid $ A$-modules (inside the category of condensed $\underline{A}$-modules)
\begin{equation}
 \label{form1}{\rm Mod}^{\rm solid}_{A}\subset  {\rm Mod}^{\rm cond}_{\underline{A }}
 \end{equation}
is a Grothendieck   abelian subcategory,   stable under all limits, colimits, and extensions.  The
inclusion \eqref{form1} admits a left adjoint
\begin{equation}
 \label{form11}
 {\rm Mod}^{\rm cond}_{\underline{A}}\to  {\rm Mod}^{\rm solid}_{A} : \quad M \mapsto M \otimes_{\underline{A}} A, 
 \end{equation}
which   preserves all colimits and is  symmetric monoidal. 
\item  The canonical  functor 
\begin{equation}
\label{form2}
\sd({\rm Mod}^{\rm solid}_{A})
\to  \sd({\rm Mod}^{\rm cond}_{\underline{A}}) 
\end{equation}
is  fully faithful. It preserves all limits and colimits. 
A complex  $M \in \sd({\rm Mod}^{\rm cond}_{\underline{A}})$ is in $\sd({\rm Mod}^{\rm solid}_{A})$ if and only if $H^i(M)$
is in ${\rm Mod}^{\rm solid}_{A}$, for all $i$. The functor \eqref{form2}  admits a left adjoint
\begin{equation}
\label{form3}
\sd({\rm Mod}^{\rm cond}_{\underline{A}}) \to \sd({\rm Mod}^{\rm solid}_{A}) : M\mapsto M\otimes^{\rm L}_{\underline{A}}A, 
\end{equation}
which  is the left derived functor of \eqref{form11}. It is symmetric
monoidal.
\item For  $M, N \in \sd({\rm Mod}^{\rm solid}_{A})$, we have  the derived internal Hom $$\R{\uHom}_{A}(M,N)\in\sd({\rm Mod}^{\rm solid}_{A}).$$
The natural map
$\R{\uHom}_{A}(M,N) \to  \R{\uHom}_{\underline{A}}(M,N)$
is a quasi-isomorphism.
\end{enumerate}
\end{proposition}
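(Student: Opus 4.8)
\emph{Overview.} The statement is the foundational result of Clausen--Scholze on analytic rings, so the plan is to recall the architecture of its proof, organized around the free solid modules $A[S]:=\underline A[S]\otimes_{\underline A}A$ indexed by extremally disconnected profinite sets $S$ and the unit maps $\underline A[S]\to A[S]$.

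\emph{Part (1).} By definition, a condensed $\underline A$-module $M$ is solid precisely when every unit map $\underline A[S]\to A[S]$ (with $S$ extremally disconnected) induces a bijection $\Hom_{\underline A}(A[S],M)\xrightarrow{\sim}\Hom_{\underline A}(\underline A[S],M)=M(S)$; thus solidity is right-orthogonality to the (co)fibres of the unit maps, a condition formally stable under limits, extensions and retracts, while stability under colimits is built into the definition of an analytic ring (this is where the concrete structure of $A$ enters). Being a full, colimit-stable subcategory of the Grothendieck abelian category ${\rm Mod}^{\rm cond}_{\underline A}$ containing the generating set $\{A[S]\}$, the category ${\rm Mod}^{\rm solid}_A$ is itself Grothendieck abelian; the inclusion preserves limits and colimits (the latter are computed in ${\rm Mod}^{\rm cond}_{\underline A}$), so by the adjoint functor theorem it has a left adjoint, namely the unique colimit-preserving functor $M\mapsto M\otimes_{\underline A}A$ sending $\underline A[S]$ to $A[S]$, which is colimit-preserving since it is a left adjoint. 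Its symmetric monoidality is checked on the generators, where it reduces to the identification $A[S]\otimes_A A[T]\simeq A[S\times T]$ of free solid modules on products, and is then propagated by colimits; this identification is also what makes $M\otimes_A N:=(M\otimes_{\underline A}N)\otimes_{\underline A}A$ a symmetric monoidal structure on ${\rm Mod}^{\rm solid}_A$.

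\emph{Parts (2) and (3).} The crucial input is: for $S$ extremally disconnected and $N$ solid, the unit map induces an equivalence $\R\uHom_{\underline A}(A[S],N)\xrightarrow{\sim}\R\uHom_{\underline A}(\underline A[S],N)=\uHom_{\underline A}(\underline A[S],N)$, concentrated in degree $0$, and this object is again solid. Granting this, everything is formal. The $A[S]$ (with $S$ extremally disconnected) form a set of compact projective generators of ${\rm Mod}^{\rm solid}_A$: $\Hom_{\underline A}(A[S],-)=(-)(S)$ is exact and commutes with filtered colimits on solid modules, and every solid module is a quotient of a direct sum of them; hence $\sd({\rm Mod}^{\rm solid}_A)$ has enough projectives and is compactly generated. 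For the fully-faithfulness in Part (2) one reduces, by writing an arbitrary $M$ as a colimit of complexes of free solid modules and using that $\iota\colon\sd({\rm Mod}^{\rm solid}_A)\to\sd({\rm Mod}^{\rm cond}_{\underline A})$ is exact and colimit-preserving, to the comparison
$$
\R\Hom_A(A[S],N)=N(S)=\R\Hom_{\underline A}(A[S],N),
$$
whose left equality is projectivity of $A[S]$ and whose right equality is obtained by applying $\R\Gamma(*,-)$ to the crucial input. Then $\iota$ is a fully faithful right adjoint of the derived solidification $M\mapsto M\otimes^{\rm L}_{\underline A}A$; it preserves limits and colimits (colimits because solids are colimit-stable in ${\rm Mod}^{\rm cond}_{\underline A}$), and the derived solidification is symmetric monoidal as the left derived functor of the symmetric monoidal functor of Part (1). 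The description of the image is a standard Postnikov argument: since solid modules are closed under subobjects, quotients and extensions, the cohomologies of any complex with solid cohomology are solid and the complex is reassembled inside $\sd({\rm Mod}^{\rm solid}_A)$; conversely $\iota$ is $t$-exact, so objects of $\sd({\rm Mod}^{\rm solid}_A)$ have solid cohomology. For Part (3), writing $M$ as a colimit of complexes of $A[S]$'s turns $\R\uHom_A(M,N)$ into a limit of the objects $\R\uHom_A(A[S],N)=\uHom_A(A[S],N)=\uHom_{\underline A}(\underline A[S],N)$, which are solid by the crucial input; since solids are closed under limits and $\sd({\rm Mod}^{\rm solid}_A)$ is closed under limits in $\sd({\rm Mod}^{\rm cond}_{\underline A})$, the object $\R\uHom_A(M,N)$ is solid. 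Finally the comparison $\R\uHom_A(M,N)\to\R\uHom_{\underline A}(M,N)$ is checked on generators: both sides send colimits in $M$ to limits, and they agree on $M=A[S]$ by the crucial input.

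\emph{Main obstacle.} Everything above is formal once the crucial input is known — that the free solid module $A[S]$ on an extremally disconnected $S$ is ``internally projective against solid modules'', i.e. $\R\uHom_{\underline A}(A[S],N)$ is a solid module in degree $0$ for every solid $N$. This is exactly where the axioms governing the analytic ring $A$ (the structure of the completed free modules $A[S]$ and of the bar-type resolutions relating them to the $\underline A[S]$) are used in an essential way, and it is the technical core of the Clausen--Scholze argument; see \cite[Prop.~7.5]{Sch19}.
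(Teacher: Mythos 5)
The paper offers no proof of this proposition to compare against: it is recalled verbatim from Clausen--Scholze \cite[Prop.~7.5]{Sch19} and used as a black box throughout. Your write-up is, accordingly, a reconstruction of the cited argument rather than an alternative to anything in the paper, and in broad outline (free modules $A[S]$ for $S$ extremally disconnected, the orthogonality description of $A$-completeness, generation-and-colimit reductions, a Postnikov/truncation argument for the description of the image, and the reduction of part (3) to the case $M=A[S]$) it matches the standard architecture of that proof.

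Several of your supporting claims need repair, though, if the sketch is to stand on its own. First, the ``crucial input'' you isolate is not an extra fact requiring ``the concrete structure of $A$'': Proposition~\ref{nyear1} concerns an arbitrary analytic ring, and the degree-$0$ identification $\R\uHom_{\underline A}(A[S],N)\simeq \uHom_{\underline A}(\underline A[S],N)$ for $N$ an $A$-complete module follows from the analytic-ring axiom itself (which is a condition on complexes whose terms are direct sums of modules $A[T]$) by resolving $N$ by direct sums of $A[S]$'s and using that evaluation at an extremally disconnected set is exact on condensed modules; concrete structure enters only when one verifies that a given pre-analytic ring, e.g.\ $\Z_{\Box}$, is analytic. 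Second, and relatedly, stability of ${\rm Mod}^{\rm solid}_{A}$ under colimits is not ``built into the definition'', and stability under extensions is not formal at the abelian level (lifting a map $A[S]\to M''$ along $M\to M''$ requires control of $\Ext^1(A[S],M')$, i.e.\ precisely the derived comparison); in \cite{Sch19} these abelian statements are extracted from the derived ones, not assumed. Third, your Postnikov step invokes closure of solid modules under subobjects, which is false in general --- already for $\Z_{\Box}$, the non-solid module $\Z[S]$ embeds into its solidification $\Z[S]^{\Box}$ for $S$ an infinite profinite set --- but it is also unnecessary: the full subcategory of $\sd({\rm Mod}^{\rm cond}_{\underline A})$ cut out by the condition $\R\Hom_{\underline A}(A[S],C)\stackrel{\sim}{\to}\R\Gamma(S,C)$ is stable under shifts, fibers/cofibers, limits and colimits, and together with the degree-$0$ statement for $A$-complete modules this is all the truncation argument needs. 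With these corrections your text is a faithful outline of the Clausen--Scholze proof; as written, its technical core is deferred to the very reference the paper itself cites, so it does not add an independent proof.
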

\begin{remark}
In the context of the above proposition, we set $M^{\Box}:=M\otimes_{\underline{A}}A$, for $M\in{\rm Mod}^{\rm cond}_{\underline{A}}$, and $M^{\LL_{\Box}}:=M\otimes^{\LL}_{\underline{A}}A$, for 
$M\in\sd({\rm Mod}^{\rm cond}_{\underline{A}})$.
\end{remark}
 \subsubsection{Condensed sheaves} \label{cond-kol1} We list here properties of topological sheaves with values in condensed abelian groups. 
 
 \vskip2mm
  ($\bullet$) {\em Sheaves of condensed abelian groups.} Consider first the category ${\rm Sh}(S_{\proeet},{\rm CondAb})\simeq {\rm Sh}(S^{\rm top}_{\proeet},{\rm Ab})$ of sheaves with values in condensed abelian groups.  It is a Grothendieck abelian category, which inherits
 a closed symmetric monoidal structure from that of ${\rm CondAb}$:  For  $\sff,\sg\in {\rm Sh}(S_{\proeet},{\rm CondAb})$, their tensor product  $\sff\otimes \sg$ is defined by sheafifying the presheaf tensor product
$$
\{Y\mapsto \sff(Y)\otimes \sg(Y)\in {\rm CondAb}\},\quad Y\in S_{\proeet}. 
$$
The internal $\Hom$, $\Hhom_{S^{\rm top}}(\sff,\sg)$, can be defined as a presheaf by
$$
\{Y\mapsto  \uHom_{Y^{\rm top}}(\sff_Y,\sg_Y)\in {\rm CondAb}\},\quad Y\in S_{\proeet},
$$
where we set 
\begin{equation}\label{coend1}
\uHom_{Y^{\rm top}}(\sff_Y,\sg_Y)
:=\ker\Big(\prod_{Y_1\to Y}\uHom(\sff(Y_1),\sg(Y_1))\to \prod_{Y_2\to Y_1}\uHom(\sff(Y_1),\sg(Y_2))\Big).
\end{equation}
It is actually a sheaf (see \cite[Prop. 2.2.14]{Schn}). It is the (internal)  right adjoint of the tensor product \cite[Prop. 2.2.16]{Schn}.  

  By Remark \ref{koczkodan1} and  Lemma \ref{trump1} below,  the category ${\rm Sh}(S_{\proeet},{\rm CondAb})$  is generated by the set 
\begin{equation}\label{pierre10}
\{\Z[h^{\delta}_Y]\otimes\Z[T]\}, \quad Y\in {\rm Perf}_S, T - \mbox{ profinite set}.
\end{equation}
Here the sheaf $h^{\delta}_Y$ is defined by $Y_1\mapsto \Hom_S(Y_1,Y)$, where the $\Hom$ is given discrete topology. 
\begin{lemma}\label{koczkodan2}
The set  \eqref{pierre10} for  $w$-contractible $Y$'s and  extremally disconnected $T$'s  generates ${\rm Sh}(S_{\proeet},{\rm CondAb})$.
Moreover, these generators are   compact and projective. 
\end{lemma}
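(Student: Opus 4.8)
The plan is to combine two ingredients: a generation statement for $w$-contractible $Y$ and extremally disconnected $T$ at the level of the base sites, and the explicit descriptions of compactness and projectivity for the corresponding representable sheaves. First I would recall that on $S_{\proeet}$ the $w$-contractible objects form a basis of the topology (by \cite[Lemma 1.2]{MW}), and that on $*_{\proeet}$ the extremally disconnected profinite sets form a basis; consequently, in the fibre product site $S^{\rm top}_{\proeet}=S_{\proeet}\times *_{\proeet}$ the objects $(Y,T)$ with $Y$ $w$-contractible and $T$ extremally disconnected form a basis. Since the representable sheaves attached to a basis of a site generate the associated topos, and since $\Z[h^{\delta}_Y]\otimes\Z[T]$ is exactly the free abelian sheaf on the representable sheaf of the object $(Y,T)$ (here one must check that $h^\delta_Y\otimes\underline T$, i.e.\ the sheafification of $Y_1\mapsto \Hom_S(Y_1,Y)\times\scc(Y_1,T)$, coincides with the representable $(Y,T)$ under the identification ${\rm Sh}(S_{\proeet},{\rm Cond})\simeq{\rm Sh}(S^{\rm top}_{\proeet})$), we get that the set in \eqref{pierre10} restricted to $w$-contractible $Y$ and extremally disconnected $T$ generates ${\rm Sh}(S_{\proeet},{\rm CondAb})$. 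The passage from "generate as a topos / in sheaves of sets" to "generate the abelian category" is standard: a family of objects generates a Grothendieck abelian category of sheaves iff the corresponding representables generate the underlying topos, applied after taking free abelian groups, which is left adjoint and hence preserves generation.

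For compactness and projectivity I would argue as follows. Projectivity of $\Z[h^\delta_Y]\otimes\Z[T]$ amounts to the functor $\sg\mapsto \Hom(\Z[h^\delta_Y]\otimes\Z[T],\sg)=\uHom(\underline T,\sg(Y))(*)=\Hom_{\rm CondAb}(\Z[T],\sg(Y))$ being exact in $\sg$, using the (enriched) Yoneda identification of maps out of $\Z[h^\delta_Y]$ with sections over $Y$. Exactness of $\sg\mapsto \sg(Y)$ holds because $Y$ is $w$-contractible: every pro-\'etale cover of $Y$ splits, so higher cohomology vanishes and the global-sections-over-$Y$ functor on sheaves is exact (this is recorded in the bullet list on $w$-contractible spaces above). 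Then $\Hom_{\rm CondAb}(\Z[T],-)$ is exact because $\Z[T]$ is a projective object of ${\rm CondAb}$ for $T$ extremally disconnected (this is one of the foundational facts of condensed mathematics: $\Z[T]$ with $T$ extremally disconnected is projective, and these generate ${\rm CondAb}$). Composing the two exact functors gives projectivity. For compactness one argues the same way with filtered colimits in place of exact sequences: $\sg\mapsto\sg(Y)$ commutes with filtered colimits of sheaves when $Y$ is quasi-compact quasi-separated (no sheafification is needed to compute sections over a qcqs object up to a cofinal system of covers that split, again using $w$-contractibility), and $\Hom_{\rm CondAb}(\Z[T],-)$ commutes with filtered colimits because $T$, being extremally disconnected, is in particular a \emph{compact} (indeed internally projective and "finitely presented") generator — concretely $\Z[T]=\colim_i \Z[T_i]$ is wrong in general, so one uses instead that $\Z[S]$ for $S$ finite are compact projective generators of ${\rm CondAb}$ and that $\Z[T]$ for $T$ extremally disconnected is a retract of a product... actually the cleanest route is to invoke that ${\rm CondAb}$ has the set $\{\Z[T]: T\ \text{extr.\ disc.}\}$ as compact projective generators, which is \cite[Thm.]{Sch19}.

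So the key steps, in order, are: (i) the basis statement for $(Y,T)$ with $Y$ $w$-contractible, $T$ extremally disconnected in $S^{\rm top}_{\proeet}$; (ii) identification of $\Z[h^\delta_Y]\otimes\Z[T]$ with the free abelian sheaf on the representable of $(Y,T)$; (iii) generation of the abelian sheaf category from a generating set of representables via the left-adjoint free-abelian-group functor; (iv) exactness of "sections over $Y$" from $w$-contractibility of $Y$; (v) projectivity and compactness of $\Z[T]$ in ${\rm CondAb}$ for extremally disconnected $T$; (vi) compose. The main obstacle I expect is step (ii) together with the precise bookkeeping in step (iv)--(v): one must be careful that "sections over $Y$" of a \emph{condensed-valued} sheaf is exact/commutes with filtered colimits — this uses both that $Y$ is $w$-contractible (killing the sheafification/Čech descent) and that the target ${\rm CondAb}$ is well-behaved — and that the internal $\uHom$ from $\underline T$ is being evaluated correctly, i.e.\ that $\Hom(\Z[h^\delta_Y]\otimes\Z[T],\sg)=\Hom_{\rm CondAb}(\Z[T],\sg(Y))$ really holds as stated (this is the enriched Yoneda lemma combined with $\otimes\dashv\uHom$). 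Everything else is a formal assembly of facts already available in the excerpt or standard in condensed mathematics.
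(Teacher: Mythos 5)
Your proposal is correct, and for the two claims the paper actually argues in its proof (compactness and projectivity) it follows essentially the same route: reduce via the tensor--hom adjunction and the (enriched) Yoneda identification to $\Hom_{S^{\rm top}}(\Z[h^{\delta}_Y]\otimes\Z[T],\sff)\simeq\Hom_{\rm CondAb}(\Z[T],\sff(Y))$, then use that $\Gamma(Y,-)$ is exact and commutes with filtered colimits because every pro-\'etale cover of a $w$-contractible $Y$ splits, together with the fact that $\Z[T]$ is a compact projective object of ${\rm CondAb}$ for $T$ extremally disconnected; your final invocation of the Clausen--Scholze compact projective generators is exactly the right way to settle the hesitation in your step (v), and it matches the paper's (terser) "$Y$ and $\Z[T]$ are quasi-compact" justification.

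The one point where you do more than the paper is the generation statement: the paper's proof silently relies on the earlier observation (Remark \ref{koczkodan1} and Lemma \ref{trump1}) that the full family \eqref{pierre10} generates, leaving the restriction to $w$-contractible $Y$ and extremally disconnected $T$ implicit, whereas you derive it directly from the fact that such pairs $(Y,T)$ form a basis of $S^{\rm top}_{\proeet}$, that representables on a basis generate the topos, and that free abelianization (a left adjoint) preserves generation. That is a perfectly valid and arguably cleaner route; the only bookkeeping to keep straight in your step (ii) is that the sheafified representable of $(Y,T)$ is $Y_1\mapsto \underline{\Hom_S(Y_1,Y)}\times\underline{\scc(|Y_1|,T)}$ (sheafification in the $Y_1$-direction turns the constant factor $\underline{T}$ into locally constant, i.e.\ continuous, $T$-valued functions), and that applying $\Z[-]$, which commutes with sheafification and sends products to tensor products, identifies it with $\Z[h^{\delta}_Y]\otimes\Z[T]$ as used in the key displayed isomorphism; with that checked, your argument goes through as written.
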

\begin{proof}
Take  $\Z[h^{\delta}_Y]\otimes \Z[T]$ as in \eqref{pierre10}. For compactness we need to show that
 the functor  $\Hom_{S^{\rm top}}(\Z[h^{\delta}_Y]\otimes \Z[T], -)$ commutes with filtered colimits. But
  \begin{align}\label{niedziela2}
\Hom_{S^{\rm top}}(\Z[h^{\delta}_Y]\otimes \Z[T], \sff) & \simeq \Hom_{S^{\rm top}}(\Z[h^{\delta}_Y], [\Z[T],\sff])\simeq [\Z[T],\sff](Y)(*)\simeq [\Z[T],\sff(Y)](*)\\
 & \simeq \Hom(\Z[T],\sff(Y)),\notag
  \end{align}
  where we wrote $[\Z[T],\sff]$ for the sheaf $Y_1\mapsto \uHom(\Z[T],\sff(Y_1))$ and  the second isomorphism follows from the enriched Yoneda Lemma (see Section \ref{enrichedyoneda}). But
   $\Gamma(Y,-)$ commutes with colimits since  both $Y$  and $\Z[T]$ are  quasi-compact. 
  
    For projectvity, take a surjection $\sff_1\to \sff_2\to 0$. We need to show that the induced map 
    $$
    \Hom_{S^{\rm top}}(\Z[h^{\delta}_Y]\otimes \Z[T], \sff_1)\to \Hom_{S^{\rm top}}(\Z[h^{\delta}_Y]\otimes \Z[T], \sff_2)
    $$
    is surjective as well. But, using \eqref{niedziela2}, this map can be written as 
    $$
    \Hom(\Z[T],\sff_1(Y))\to \Hom(\Z[T],\sff_2(Y))
    $$
    and this is surjective by projectivity of $\Z[T]$ since the map $\sff_1(Y)\to \sff_2(Y)$ is surjective
as $Y$ is $w$-contractible. 
\end{proof}
 \vskip2mm
   ($\bullet$) {\em Sheaves of condensed $\underline{\Q}_p$-modules.} We will denote by 
${\rm Sh}(S_{\proeet},{\rm Mod}^{\rm cond}_{\underline{\Q}_p})$ 
the subcategory of ${\rm Sh}(S_{\proeet},{\rm CondAb})$ of  $\underline{\Q}_p$-modules. It is again a Grothendieck abelian category (as the category of modules on a ringed site). Since we have ${\rm Sh}(S_{\proeet},{\rm Mod}^{\rm cond}_{\underline{\Q}_p})\simeq {\rm Sh}(S^{\rm top}_{\proeet},\underline{\Q}_p)$,  the classical theory of tensor products and internal $\Hom$s of sheaves on ringed sites (see \cite[Tag 03A4]{Stck}) yields
the condensed tensor product and internal $\Hom$: For  $\sff,\sg\in {\rm Sh}(S_{\proeet},{\rm Mod}^{\rm cond}_{\underline{\Q}_p})$ their tensor product  $\sff\otimes_{\underline{\Q}_p}\sg$ is defined by sheafifying the presheaf tensor product
$$
\{Y\mapsto \sff(Y)\otimes_{\underline{\Q}_p(Y)}\sg(Y)\in {\rm Mod}^{\rm cond}_{\Q_p(Y)}\},\quad Y\in S_{\proeet}. 
$$
The internal $\Hom$, $\Hhom_{S^{\rm top},\underline{\Q}_p}(\sff,\sg)$, can be defined  as a presheaf by
$$
\{Y\mapsto  \uHom_{Y^{\rm top},\underline{\Q}_p}(\sff_Y,\sg_Y)\in {\rm Mod}^{\rm cond}_{\Q_p(Y)}\},\quad Y\in S_{\proeet},
$$
where we set 
\begin{equation}\label{coend1a}
\uHom_{Y^{\rm top},\underline{\Q}_{p}}(\sff_Y,\sg_Y)
:=\ker\Big(\prod_{Y_1\to Y}\uHom_{\Q_p(Y_1)}(\sff(Y_1),\sg(Y_1))\to \prod_{Y_2\to Y_1}\uHom_{\Q_p(Y_1)}(\sff(Y_1),\sg(Y_2))\Big).
\end{equation}
It is actually a sheaf \cite[Tag 03EM]{Stck} and it is the (internal)  right adjoint of the tensor product \cite[Tag 03EO]{Stck}.

  The category ${\rm Sh}(S_{\proeet},{\rm Mod}^{\rm cond}_{\underline{\Q}_p})$  is generated by the set 
$$
\{\underline{\Q}_p[h^{\delta}_Y]\otimes_{\underline{\Q}_p}\underline{\Q}_p[T]\}, \quad Y\in {\rm Perf}_S, 
\ T - \mbox{ profinite set}.
$$
And we have an analog of Lemma \ref{koczkodan2} in this setting.

 \vskip2mm
 ($\bullet$) {\em Derived picture.} We have an analogous derived picture, which, again, follows from the classical story (see \cite[01FQ]{Stck}). Namely, the existence of $K$-flat resolutions\footnote{Recall that $K$-flatness, and the related notion of $K$-injectivity, for unbounded complexes (originally of modules over a ring $R$) were introduced by Spaltenstein  in \cite{Spal} as an analogue of the classical notion of flatness (and injectivity) of modules. }  yields the derived tensor product 
  $\sff\otimes^{\LL}_{\underline{\Q}_p}\sg$, for $\sff, \sg\in\sd(S_{\proeet},{\rm Mod}^{\rm cond}_{\underline{\Q}_p})$ (see \cite[Tag 06YU]{Stck}). The internal $\Hom$, $\R\Hhom_{S^{\rm top},\underline{\Q}_p}(\sff,\sg)$, is defined using the existence of $K$-injective resolutions. It is the right (internal) adjoint of the derived tensor product  \cite[Tag 08J9]{Stck}. 
 
 We have analogous  definitions and properties for sheaves with values in ${\rm CondAb}$. 
 
\begin{remark}   
For $\sff\in  \sd(S_{\proeet},{\rm Ab})$,   we have in $\sd(\rm Ab)$
$$
\R\Gamma(Y^{\rm top}_{\proeet},\R\pi_*\sff)(T)\simeq \R\Gamma_{\proeet}(Y\times T,\sff).
$$
This can be easily seen for $Y$, which are w-contractible and by pro-\'etale descent in $Y$-variable in general. 
In particular, $\R\Gamma(Y^{\rm top}_{\proeet},\R\pi_*\sff)(*)\simeq \R\Gamma_{\proeet}(Y,\sff)$. Similarly for $\underline{\Q}_p$-sheaves.
\end{remark}
\subsubsection{Solid sheaves} \label{solid1} We pass now to topological sheaves with values in solid abelian groups. 

  \vskip2mm
   ($\bullet$) {\em Sheaves of solid  abelian groups.} Recall (see Proposition \ref{nyear1}) that the category ${\rm Solid}$ of solid abelian groups is a Grothendieck abelian category, which is stable under limits, colimits, and extensions in ${\rm CondAb}$. It is also compactly generated.  It follows that  the category
${\rm Sh}(S_{\proeet},{\rm Solid})$ behaves as expected: it is a Grothendieck abelian category.  Moreover, it is generated by the set 
\begin{equation}\label{koczkodan4}
\{\Z[h^{\delta}_Y]\otimes^{\Box}\Z[T]\}, \quad Y\in {\rm Perf}_S,\  T - \mbox{ profinite set}.
\end{equation}
And we have an analog of Lemma \ref{koczkodan2} in this setting. 

  For $\sff,\sg\in  {\rm Sh}(S_{\proeet},{\rm Solid})$, we define $\uHom^{\Box}_{S}(\sff,\sg)\in {\rm Solid}$ via  the  end definition as in \eqref{coend1}. The set  $\uHom^{\Box}_{S}(\sff,\sg)(*)$ is the usual $\Hom$-set
  in the category $ {\rm Sh}(S_{\proeet},{\rm Solid})$.  The two categories of topological sheaves ${\rm Sh}(S_{\proeet},{\rm CondAb})$ and $ {\rm Sh}(S_{\proeet},{\rm Solid})$  are related: 

\begin{lemma}\label{solidification1}
 The canonical forgetful functor ${\rm Sh}(S_{\proeet},{\rm Solid})\to {\rm Sh}(S_{\proeet},{\rm CondAb})$ is (topologically)  fully faithful and has a (topological) left adjoint $\sff\mapsto \sff^{\Box}$ given by the sheafification of the presheaf\footnote{The sheafification is necessary because the solidification functor $\Box$ is not left exact.}:
 $$
{\sff}^{\Box,{\rm psh}}:\quad  \{Y\mapsto \sff(Y)^{\Box}\},\quad Y\in S_{\proeet}.
 $$
 It commutes with all colimits. 
 \end{lemma}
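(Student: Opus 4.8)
The plan is to construct the functor $\Box$ explicitly as the composite of presheaf-level solidification with sheafification, and then verify the adjunction by unwinding definitions, reducing everything to the corresponding adjunction at the level of condensed abelian groups supplied by Proposition \ref{nyear1}. First I would define the presheaf $\sff^{\Box,{\rm psh}}$ by $Y\mapsto \sff(Y)^{\Box}=\sff(Y)\otimes_{\Z}\Z^{\Box}$ (using the notation of the Remark after Proposition \ref{nyear1}), observe that $Y\mapsto\sff(Y)^{\Box}$ is functorial in $Y$ because the solidification functor is, and then set $\sff^{\Box}:=(\sff^{\Box,{\rm psh}})^{\#}$, the sheafification. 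Since $\sff\mapsto\sff^{\Box,{\rm psh}}$ is a composite of the objectwise left adjoint $(-)^{\Box}$ with restriction of presheaves, and sheafification is left adjoint to the inclusion of sheaves into presheaves, the composite $\sff\mapsto\sff^{\Box}$ is a composite of left adjoints, hence a left adjoint; its right adjoint is the composite of the inclusion ${\rm Sh}(S_{\proeet},{\rm Solid})\hookrightarrow{\rm PSh}(S_{\proeet},{\rm Solid})$, the objectwise forgetful functor ${\rm PSh}(S_{\proeet},{\rm Solid})\to{\rm PSh}(S_{\proeet},{\rm CondAb})$, and the inclusion of sheaves into presheaves — but by Proposition \ref{nyear1}(1) a presheaf of solid abelian groups that is a sheaf as a presheaf of condensed abelian groups is automatically a sheaf of solid abelian groups (solid groups are closed under the limits defining the sheaf condition), so this right adjoint is just the forgetful functor ${\rm Sh}(S_{\proeet},{\rm Solid})\to{\rm Sh}(S_{\proeet},{\rm CondAb})$, as claimed.

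Next I would check full faithfulness of the forgetful functor. By Proposition \ref{nyear1}(1) the inclusion ${\rm Mod}^{\rm solid}_{\Z}\hookrightarrow{\rm CondAb}$ is fully faithful, so the induced functor on presheaf categories is fully faithful, and this property is inherited by the full subcategories of sheaves since a morphism of sheaves is the same as a morphism of underlying presheaves. For the enriched (``topological'') refinement of full faithfulness I would argue that the internal-Hom object $\uHom^{\Box}_{S}(\sff,\sg)$ computed via the end \eqref{coend1} inside ${\rm Solid}$ agrees with $\uHom_{S}(\sff,\sg)$ computed inside ${\rm CondAb}$: each factor $\uHom(\sff(Y_1),\sg(Y_1))$ is already solid by Proposition \ref{nyear1}(3) (internal Hom into a solid group lands in solid groups), and ${\rm Solid}$ is closed under the products and kernels in the end, so the two kernels coincide. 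The enriched left-adjointness of $\Box$ then follows from the objectwise enriched adjunction $\uHom(\sff(Y)^{\Box},M)\cong\uHom(\sff(Y),M)$ for $M$ solid, combined with the compatibility of internal Hom with sheafification.

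Finally, commutation with all colimits is formal: $\sff\mapsto\sff^{\Box}$ is a left adjoint (to the forgetful functor, which preserves all limits and colimits by Proposition \ref{nyear1}), and left adjoints preserve colimits. Alternatively one sees it directly since both objectwise solidification (Proposition \ref{nyear1}(1), the functor $M\mapsto M^{\Box}$ preserves colimits) and sheafification preserve colimits.

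\textbf{Main obstacle.} The one point requiring genuine care — and the reason the sheafification appears in the statement — is that $(-)^{\Box}$ is not left exact, so $\sff^{\Box,{\rm psh}}$ need not be a sheaf even when $\sff$ is; one must be careful that after sheafifying, the universal property is not spoiled. This is handled by the abstract composite-of-left-adjoints argument above, which never uses exactness of $\Box$, only its existence as a left adjoint; the subtle check is that the right adjoint really is the plain forgetful functor on sheaf categories (not something involving a further sheafification), which is exactly where one invokes that solid groups form an abelian subcategory of ${\rm CondAb}$ stable under the limits defining descent, so that the sheaf condition is detected on underlying condensed groups.
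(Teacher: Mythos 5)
Your proposal is correct and follows essentially the same route as the paper: both reduce the (topological) full faithfulness to the objectwise full faithfulness of ${\rm Solid}$ in ${\rm CondAb}$ via the end description of $\uHom^{\Box}_{S}$, and both obtain the adjunction by passing through presheaves via the sheafification adjunction and invoking the objectwise adjunction $\uHom(\sff(Y)^{\Box},M)\simeq\uHom(\sff(Y),M)$ for $M$ solid. The only cosmetic caveat is your phrase ``composite of left adjoints'' (the inclusion of sheaves into presheaves is a right adjoint); the chain of Hom-isomorphisms you implicitly use, together with your observation that the sheaf condition for solid-valued presheaves is detected on underlying condensed sheaves, is what actually carries the argument, exactly as in the paper.
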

 \begin{proof}For the first claim, we need to show that, for $\sff, \sg \in {\rm Sh}(S_{\proeet},{\rm Solid})$,  the canonical map in ${\rm CondAb}$
 $$
 \uHom^{\Box}_{S}(\sff,\sg)\to  \uHom_{S^{\rm top}}(\sff,\sg)
 $$
 is an isomorphism. But using the end definition of both sides this reduces to the fully-faithfulness of the category ${\rm Solid}$ in ${\rm CondAb}$.
 
    For the second claim, note that,  from definitions, we get a natural maps (of presheaves on $S_v$ with values in ${\rm CondAb}$):
 $$
 \sff\to \sff^{\Box}. 
 $$
 We need to show that, for $\sff\in {\rm Sh}(S_{\proeet},{\rm CondAb})$,
$\sg \in {\rm Sh}(S_{\proeet},{\rm Solid})$, the induced morphism in ${\rm CondAb}$
 $$
\uHom^{\Box}_{S}(\sff^{\Box},\sg)\stackrel{\sim}{\to}  \uHom_{S^{\rm top}}(\sff^{\Box},\sg) \to \uHom_{S^{\rm top}}(\sff,\sg) 
 $$
 is an isomorphism. 
 By adjointness of the sheafification functor, we can pass from sheaves to presheaves and  use the presheaf ${\sff}^{\Box,{\rm psh}}$ in place of $\sff^{\Box}$. Using the end definition, this reduces to showing that
 $$
  \uHom^{\Box}({\sff}(Y_1)^{\Box},\sg(Y_2))\simeq \uHom(\sff(Y_1),\sg(Y_2)),\quad Y_1, Y_2\in S_{\proeet},
 $$
 which is clear. 
  \end{proof}

    The category $ {\rm Sh}(S_{\proeet},{\rm Solid})$ inherits
 a closed symmetric monoidal structure from that of ${\rm Solid}$:  For  presheaves $\sff,\sg\in {\rm PSh}(S_{\proeet},{\rm Solid})$,  their tensor product  $\sff\otimes^{\Box, {\rm psh}} \sg$ is defined as  the presheaf 
$$
\{Y\mapsto \sff(Y)\otimes^{\Box} \sg(Y)\in {\rm Solid}\},\quad Y\in S_{\proeet}. 
$$
For  sheaves $\sff,\sg\in {\rm Sh}(S_{\proeet},{\rm Solid})$,  their tensor product  $\sff\otimes^{\Box} \sg$ is defined as the sheafification of the presheaf tensor product $\sff\otimes^{\Box, {\rm psh}} \sg$: 
$$\sff\otimes^{\Box} \sg:=(\sff\otimes^{\Box, {\rm psh}} \sg)^{\sharp}.
$$
We have the expected relations: 
\begin{lemma}\label{tensor} 
\begin{enumerate}
\item For $\sff,\sg \in  {\rm PSh}(S_{\proeet},{\rm Solid})$,  we have a canonical isomorphism in ${\rm Sh}(S_{\proeet},{\rm Solid})$
$$\sff^{\sharp} \otimes^{\Box} \sg^{\sharp}\simeq  (\sff \otimes^{\Box,{\rm psh}}\sg)^{\sharp}.$$
\item  For $\sff,\sg \in  {\rm PSh}(S_{\proeet},{\rm CondAb})$,  we have a canonical isomorphism in ${\rm Sh}(S_{\proeet},{\rm Solid})$
$$(\sff^{\sharp} \otimes\sg^{\sharp})^{\Box}\simeq  ((\sff \otimes^{\rm psh}\sg)^{\Box,{\rm psh}})^{\sharp}.$$
\end{enumerate}
\end{lemma}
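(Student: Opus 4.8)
The plan is to deduce both isomorphisms from two elementary facts about sheaves on the pro-\'etale site $S_{\proeet}$ with values in a Grothendieck abelian category (here ${\rm Solid}$ or ${\rm CondAb}$): (a) $w$-contractible objects $Y\in{\rm Perf}_S$ form a basis of $S_{\proeet}$ (see the discussion of \cite[Lemma 1.2]{MW} above), so that a morphism of sheaves is an isomorphism as soon as it induces an isomorphism on sections over every $w$-contractible $Y$; and (b) for such a $Y$ every pro-\'etale cover splits, whence the \v{C}ech colimit computing the plus-construction already stabilizes at the trivial cover and $\sff^{\sharp}(Y)=\sff(Y)$ for \emph{any} presheaf $\sff$. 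Fact (b) holds in the condensed/solid-valued setting verbatim, since the plus-construction uses only equalizers and filtered colimits, the latter exact in a Grothendieck abelian category. Note also that both presheaf tensor products $\otimes^{\rm psh}$, $\otimes^{\Box,{\rm psh}}$ and the presheaf-level solidification $(-)^{\Box,{\rm psh}}$ are computed sectionwise by definition, hence commute with ``evaluate at $Y$'' trivially.

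\textbf{(1)} By definition $\sff^{\sharp}\otimes^{\Box}\sg^{\sharp}=(\sff^{\sharp}\otimes^{\Box,{\rm psh}}\sg^{\sharp})^{\sharp}$, and there is a canonical comparison morphism of sheaves
$$
(\sff\otimes^{\Box,{\rm psh}}\sg)^{\sharp}\longrightarrow(\sff^{\sharp}\otimes^{\Box,{\rm psh}}\sg^{\sharp})^{\sharp}
$$
obtained by sheafifying the map induced on objectwise tensor products by the sheafification units $\sff\to\sff^{\sharp}$, $\sg\to\sg^{\sharp}$. On a $w$-contractible $Y$ both source and target evaluate --- using (b), the sectionwise description of $\otimes^{\Box,{\rm psh}}$, and (b) again to identify $\sff^{\sharp}(Y)=\sff(Y)$, $\sg^{\sharp}(Y)=\sg(Y)$ --- to $\sff(Y)\otimes^{\Box}\sg(Y)$, and the comparison morphism realizes this identity. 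Being an isomorphism on the basis (a), it is an isomorphism.

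\textbf{(2)} Recall that for sheaves $\otimes$ is the objectwise condensed tensor followed by $(-)^{\sharp}$, and $(-)^{\Box}$ is the presheaf-level solidification $(-)^{\Box,{\rm psh}}$ followed by $(-)^{\sharp}$ (Lemma \ref{solidification1}). Evaluating $(\sff^{\sharp}\otimes\sg^{\sharp})^{\Box}$ and $((\sff\otimes^{\rm psh}\sg)^{\Box,{\rm psh}})^{\sharp}$ on a $w$-contractible $Y$, and peeling off each $(-)^{\sharp}$ by (b) and each sectionwise operation for free, both collapse to $(\sff(Y)\otimes\sg(Y))^{\Box}$; the evident natural comparison morphism induces this identification, and we conclude as in (1) via (a).

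The step needing the most care --- and the closest thing here to an obstacle --- is the bookkeeping: one must check that the comparison morphisms written down are the canonical ones and that they are isomorphisms of presheaves only \emph{after} restriction to $w$-contractibles (not before sheafification in general), so that (a) is genuinely needed; and one should record that ``sheafification of ${\rm Solid}$- (resp.\ ${\rm CondAb}$-) valued presheaves on $S_{\proeet}$'' means sheafification for the pro-\'etale topology on ${\rm Perf}_S$ alone --- the $\ast_{\proeet}$-direction being built into the coefficients --- so that ``covers of $w$-contractibles split'' is exactly the input used in (b). No exactness of $\otimes^{\Box}$, $(-)^{\Box}$, or $(-)^{\sharp}$ is invoked anywhere, so beyond this bookkeeping the argument is soft.
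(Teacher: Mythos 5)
Your key fact (b) is false, and it fails exactly for the presheaves you need it for. What w-contractibility of $Y$ gives is that every \emph{surjective map} onto $Y$ in the site admits a section; for a covering family $\{U_i\to Y\}$ this only means that, after passing to a finite subfamily and forming $V=\bigsqcup_k U_{i_k}\to Y$, a section $s$ exists, and $s$ then refines the family by the finite clopen decomposition $Y=\bigsqcup_k s^{-1}(U_{i_k})$ --- \emph{not} by the trivial cover $\{\mathrm{id}_Y\}$. Since a w-contractible $Y$ is in general highly disconnected ($\pi_0(Y)$ is merely extremally disconnected), such decompositions are nontrivial and are cofinal among covers of $Y$, so the plus construction over them enforces finite additivity. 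Consequently $\sff(Y)\to\sff^{\sharp}(Y)$ is an isomorphism only when the presheaf already sends finite disjoint unions to products. An arbitrary $\sff\in{\rm PSh}(S_{\proeet},{\rm Solid})$ need not do so, and --- crucially --- neither do the objectwise tensor products at the heart of the lemma, even when the factors are sheaves: for $Y=Y_1\sqcup Y_2$ one has $(\sff^{\sharp}\otimes^{\Box,{\rm psh}}\sg^{\sharp})(Y)=(\sff^{\sharp}(Y_1)\oplus\sff^{\sharp}(Y_2))\otimes^{\Box}(\sg^{\sharp}(Y_1)\oplus\sg^{\sharp}(Y_2))$, whose cross terms $\sff^{\sharp}(Y_1)\otimes^{\Box}\sg^{\sharp}(Y_2)$ obstruct additivity. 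So both of your evaluations ``$=\sff(Y)\otimes^{\Box}\sg(Y)$'' (for the source and the target of the comparison map in (1), and likewise in (2)) are unjustified, and the plus-construction does not ``stabilize at the trivial cover''; replacing $Y$ by the clopen pieces $W_k$ only reproduces the same problem, so the argument does not close.

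The lemma itself is of course true, but the paper proves it by a universal-property argument that avoids evaluating sections anywhere: testing against an arbitrary $\sh\in{\rm Sh}(S_{\proeet},{\rm Solid})$, one has $\Hom^{\Box}_{S}((\sff\otimes^{\Box,{\rm psh}}\sg)^{\sharp},\sh)\simeq\Hom_{\rm psh}(\sff\otimes^{\Box,{\rm psh}}\sg,\sh)\simeq\Hom_{\rm psh}(\sff,\Hhom_{S^{\rm top}}(\sg,\sh))$, and since $\Hhom_{S^{\rm top}}(\sg,\sh)$ is a sheaf, maps out of $\sff$ factor through $\sff^{\sharp}$; running the adjunction backwards and repeating the step in the second variable exhibits both sides of (1) as representing the same functor on solid sheaves, and (2) is handled similarly. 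If you want to keep your strategy, you would have to show by other means that the comparison map is an isomorphism after sheafification (i.e.\ a local isomorphism); evaluation on w-contractibles cannot detect this, precisely because sheafification does change sections there for non-additive presheaves, and the natural replacement is exactly the adjunction argument above.
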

\begin{proof} We will reduce to universal properties of the terms of the isomorphisms. 
For $\sh \in {\rm Sh}(S_{\proeet},{\rm Solid})$, we have 
\begin{align*}
 & \Hom^{\Box}_{S}((\sff\otimes^{\Box,{\rm psh}}\sg)^{\sharp},\sh)\simeq \Hom_{\rm psh}(\sff \otimes^{\Box,{\rm psh}}\sg,\sh)\simeq 
 \Hom_{\rm psh}(\sff,\Hhom_{S^{\rm top}}(\sg,\sh))\\
  & \simeq  \Hom_{\rm psh}(\sff^\sharp,\Hhom_{S^{\rm top}}(\sg,\sh))\simeq  \Hom_{\rm psh}(\sff^\sharp\otimes^{\Box,{\rm psh}}\sg,\sh).
\end{align*}
Here we wrote  $\Hom_{\rm psh}(-,-) $ for the sets of morphisms in  ${\rm PSh}(S_{\proeet},{\rm Solid})$. 
Repeating this computation with $\sg$ in place of $\sff$ yields claim (1) of the lemma.

 The proof of the second   claim of the lemma is similar. 
\end{proof}
The solid tensor product of sheaves  inherits many properties from the solid tensor product of modules. In particular, it commutes with all colimits (since the sheafification functor does).
The internal $\Hom$, $\Hhom^{\Box}_{S}(\sff,\sg)$, can be defined as a presheaf by 
$$
\{Y\mapsto  \uHom^{\Box}_{Y}(\sff_Y,\sg_Y)\in {\rm Solid}\},\quad Y\in S_{\proeet}.
$$
It is actually a sheaf by \cite[Prop. 2.2.14]{Schn}.  Moreover, the image of the sheaf $\Hhom^{\Box}_{S}(\sff,\sg)$ by the embedding functor 
${\rm Sh}(S_{\proeet},{\rm Solid})\hookrightarrow  {\rm Sh}(S_{\proeet},{\rm CondAb})$ is the sheaf $\Hhom_{S^{\rm top}}(\sff,\sg)$.
We have the usual adjointness property:
\begin{lemma}\label{adj1}If $\sff_1,\sff_2,\sff_3\in {\rm Sh}(S_{\proeet},{\rm Solid})$, there is a canonical functorial isomorphism in ${\rm Sh}(S_{\proeet},{\rm Solid})$
$$
\Hhom^{\Box}_{S}(\sff_1\otimes^{\Box}\sff_2,\sff_3)\stackrel{\sim}{\to} \Hhom^{\Box}_{S}(\sff_1,\Hhom^{\Box}_{S}(\sff_2,\sff_3)).
$$
In particular, we have a canonical functorial isomorphism in ${\rm Solid}$
$$
\Hom^{\Box}_{S}(\sff_1\otimes^{\Box}\sff_2,\sff_3)\simeq \Hom^{\Box}_{S}(\sff_1,\Hhom^{\Box}_{S}(\sff_2,\sff_3)).
$$
\end{lemma}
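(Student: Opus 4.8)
The plan is to reduce the internal-Hom adjunction (the tensor-hom adjunction at the level of sheaves of solid abelian groups) to the tensor-hom adjunction already available at the level of presheaves, which in turn comes from the closed symmetric monoidal structure on $\mathrm{Solid}$ recorded in Proposition \ref{nyear1} together with the sheaf-theoretic formalism used in Lemma \ref{tensor}. The key technical input is that $\Hhom^{\Box}_S(\sff_2,\sff_3)$ is a genuine sheaf (via \cite[Prop. 2.2.14]{Schn}) and that the solid tensor product of sheaves is the sheafification of the presheaf tensor product, so all sheafification adjunctions apply.

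First I would test the first isomorphism against an arbitrary test sheaf: for $\sh\in{\rm Sh}(S_{\proeet},{\rm Solid})$ it suffices, by the Yoneda lemma in the presheaf category and the fact that $\Hom^{\Box}_S(-,\sh)=\uHom^{\Box}_S(-,\sh)(*)$, to produce a natural bijection
$$
\Hom^{\Box}_S\big(\sh,\Hhom^{\Box}_S(\sff_1\otimes^{\Box}\sff_2,\sff_3)\big)\simeq \Hom^{\Box}_S\big(\sh,\Hhom^{\Box}_S(\sff_1,\Hhom^{\Box}_S(\sff_2,\sff_3))\big).
$$
Unwinding the right-adjoint property of internal $\Hom$ (the sheaf $\Hhom^{\Box}_S$ being internal right adjoint to $\otimes^{\Box}$, which follows formally as in the proof of Lemma \ref{tensor} from the presheaf-level statement together with Lemma \ref{tensor}(1)), the left side becomes $\Hom^{\Box}_S(\sh\otimes^{\Box}(\sff_1\otimes^{\Box}\sff_2),\sff_3)$ and the right side becomes $\Hom^{\Box}_S((\sh\otimes^{\Box}\sff_1)\otimes^{\Box}\sff_2,\sff_3)$, so the claim reduces to the associativity of $\otimes^{\Box}$. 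That associativity is itself obtained by sheafifying the associativity of $\otimes^{\Box,\mathrm{psh}}$, which holds objectwise because $\otimes^{\Box}$ on $\mathrm{Solid}$ is symmetric monoidal, and commuting sheafification past the iterated tensor product using Lemma \ref{tensor}(1) applied twice. To conclude that the resulting natural bijection on $\Hom$-sets is induced by a morphism of sheaves (hence that the displayed map in ${\rm Sh}(S_{\proeet},{\rm Solid})$ is an isomorphism, not merely a bijection on global sections), I would run the same argument with $\sh$ replaced by the restriction to each $Y\in S_{\proeet}$ — equivalently, observe that both internal $\Hom$ sheaves are computed objectwise by the end formula \eqref{coend1}, and the objectwise adjunction in $\mathrm{Solid}$ (Lemma \ref{adj1}'s analogue in $\mathrm{Solid}$, which is part of Proposition \ref{nyear1}(3) combined with the closed monoidal structure) is compatible with the restriction maps, so the ends match. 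The final ``in particular'' is then just evaluation of the sheaf isomorphism at $*$, using $\Gamma(S,\Hhom^{\Box}_S(\sff_1,\sff_2))=\Hom^{\Box}_S(\sff_1,\sff_2)$.

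The main obstacle is bookkeeping rather than conceptual: one must be careful that all the identifications (internal $\Hom$ of sheaves equals the objectwise end, $\otimes^{\Box}$ of sheaves equals sheafified objectwise $\otimes^{\Box}$, sheafification commutes appropriately with $\otimes^{\Box,\mathrm{psh}}$ by Lemma \ref{tensor}(1)) are used consistently, and that the associativity isomorphism and the two tensor-hom adjunctions are natural in all three variables so that the composite is a well-defined morphism of sheaves. A secondary subtlety is that $\Box$ is not left exact, so one cannot naively manipulate solid tensor products at the presheaf level without sheafifying; this is precisely why Lemma \ref{tensor}(1) is the right tool and why the argument is phrased via universal properties rather than by writing down explicit elements.
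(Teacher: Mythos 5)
Your argument is correct, but it takes a different route from the paper. The paper's proof is a one-line reduction: since the embedding ${\rm Sh}(S_{\proeet},{\rm Solid})\hookrightarrow {\rm Sh}(S_{\proeet},{\rm CondAb})$ is fully faithful and sends $\Hhom^{\Box}_{S}(\sff,\sg)$ to $\Hhom_{S^{\rm top}}(\sff,\sg)$ (Lemma \ref{solidification1} and the remark preceding the statement), the adjunction "can be checked in ${\rm Sh}(S_{\proeet},{\rm CondAb})$, where it is known" from the classical ringed-site/condensed formalism; implicitly this also uses that mapping out of $\sff_1\otimes^{\Box}\sff_2$ into a solid sheaf agrees with mapping out of the condensed tensor product, via the solidification adjunction. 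You instead stay entirely in the solid setting: the sheafification adjunction plus the objectwise closed monoidal structure on ${\rm Solid}$ gives the external Hom-set adjunction (exactly the manipulation already used in the proof of Lemma \ref{tensor}, so there is no circularity with the "in particular" clause, though you should state this explicitly), and then a Yoneda argument against a test sheaf $\sh$ reduces the internal isomorphism to associativity of $\otimes^{\Box}$, which follows from Lemma \ref{tensor}(1) and objectwise associativity. The paper's route is shorter and leverages already-established compatibilities with the condensed theory; yours is longer but self-contained in the solid world and makes naturality explicit. The one point to keep tidy in your version is the identification of the Yoneda-induced isomorphism with the canonical evaluation-induced map — you flag this and your objectwise-end argument handles it, but it deserves a sentence rather than an aside.
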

\begin{proof}
This can be checked in $ {\rm Sh}(S_{\proeet},{\rm CondAb})$, where it is known. 
 \end{proof}

    ($\bullet$) {\em Sheaves of solid  $\underline{\Q}_p$-modules.} Let    ${\rm Sh}(S_{\proeet},\underline{\Q}_{p,\Box})$ denote the subcategory of ${\rm Sh}(S_{\proeet},{\rm Solid})$ of $\underline{\Q}_{p}$-modules. 
    \begin{lemma}\label{Gro1}
    The category ${\rm Sh}(S_{\proeet},\underline{\Q}_{p,\Box})$ is a Grothendieck abelian category.
    \end{lemma}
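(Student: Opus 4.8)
The plan is to realize $\mathrm{Sh}(S_{\proeet},\underline{\Q}_{p,\Box})$ as a full abelian subcategory of the known Grothendieck abelian category $\mathrm{Sh}(S_{\proeet},{\rm Solid})$ (whose Grothendieck property was established just above) and then verify that the defining properties of a Grothendieck abelian category — abelian with exact filtered colimits plus a generator — descend to it. First I would check that $\mathrm{Sh}(S_{\proeet},\underline{\Q}_{p,\Box})$ is closed under kernels, cokernels, and arbitrary (small) colimits inside $\mathrm{Sh}(S_{\proeet},{\rm Solid})$: the point is that ${\rm Mod}^{\rm solid}_{\Q_p}$ is stable under limits, colimits, and extensions inside ${\rm Solid}$ (Proposition \ref{nyear1}(1)), hence the objectwise conditions cutting out $\underline{\Q}_p$-modules among solid sheaves are preserved by these operations computed objectwise (for kernels/limits) or after sheafification (for cokernels/colimits, since sheafification is computed via an objectwise colimit construction and ${\rm Mod}^{\rm solid}_{\Q_p}$ is colimit-closed). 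This gives that $\mathrm{Sh}(S_{\proeet},\underline{\Q}_{p,\Box})$ is abelian and cocomplete with filtered colimits exact, inheriting the latter from $\mathrm{Sh}(S_{\proeet},{\rm Solid})$.

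Next I would exhibit a generating set. The natural candidate is the family
$$
\{\underline{\Q}_p[h^{\delta}_Y]\otimes^{\Box}\underline{\Q}_p[T]\}, \quad Y\in {\rm Perf}_S,\ T\text{ a profinite set},
$$
obtained by $\underline{\Q}_p$-linearizing and solidifying the generators \eqref{koczkodan4}, exactly as in the analog of Lemma \ref{koczkodan2} recorded in the condensed and solid sections; equivalently, one can apply the solidification functor $(-)^{\Box}$ (Lemma \ref{solidification1}), which is a left adjoint and hence preserves generators, to the generators of the $\underline{\Q}_p$-linear condensed category $\mathrm{Sh}(S_{\proeet},{\rm Mod}^{\rm cond}_{\underline{\Q}_p})$. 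That these objects indeed lie in $\mathrm{Sh}(S_{\proeet},\underline{\Q}_{p,\Box})$ is clear since $\underline{\Q}_p[T]$ is solid and the solid tensor product with $\underline{\Q}_p[h^{\delta}_Y]$ stays within solid $\underline{\Q}_p$-modules; that they generate follows because their images already generate $\mathrm{Sh}(S_{\proeet},{\rm Mod}^{\rm cond}_{\underline{\Q}_p})$ and the solidification/forgetful adjunction (the $\underline{\Q}_p$-linear version of Lemma \ref{solidification1}) is a reflective localization, so a generating family pushes forward to a generating family. Combining abelianness, exact filtered colimits, cocompleteness, and the existence of a generator gives the claim.

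The main obstacle I anticipate is purely bookkeeping rather than conceptual: one must be careful that passing from the $\underline{\Q}_p$-module condition on \emph{presheaves} to the one on \emph{sheaves} is compatible with sheafification, i.e.\ that the sheafification of a presheaf of solid $\underline{\Q}_p$-modules is again (objectwise) a solid $\underline{\Q}_p$-module — this uses that sheafification is computed by a transfinitely iterated plus-construction built from filtered colimits and finite limits of the sections, together with the stability of ${\rm Mod}^{\rm solid}_{\Q_p}$ under all of these operations inside ${\rm CondAb}$. Once this compatibility is in hand, everything else is a verbatim repetition of the arguments already given for $\mathrm{Sh}(S_{\proeet},{\rm Solid})$ and $\mathrm{Sh}(S_{\proeet},{\rm Mod}^{\rm cond}_{\underline{\Q}_p})$, and indeed the cleanest writeup is simply to say that $\mathrm{Sh}(S_{\proeet},\underline{\Q}_{p,\Box})$ is the category of modules over the sheaf of solid rings $\underline{\Q}_p$ in the topos $\mathrm{Sh}(S^{\rm top}_{\proeet})$, to which the standard theory of \cite[Tag 03A4]{Stck} applies once one knows the coefficient category is Grothendieck abelian and compactly generated.
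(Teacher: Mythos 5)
Your proposal is correct and follows essentially the same route as the paper: colimits (and exactness of filtered colimits) are inherited from ${\rm Sh}(S_{\proeet},{\rm Solid})$, and a generator is obtained by $\underline{\Q}_p$-linearizing the known generators of that category along the left adjoint of the (faithful) forgetful functor, yielding exactly the family \eqref{koczkodan3}. The extra bookkeeping you include (stability of solid $\underline{\Q}_p$-modules under the sheafification construction, the ringed-topos framing) is harmless detail that the paper's shorter proof leaves implicit.
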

    \begin{proof} We need to check that the category ${\rm Sh}(S_{\proeet},\underline{\Q}_{p,\Box})$ admits a generator and small colimits such that small filtered colimits are exact. 
    But the colimits can be inherited from ${\rm Sh}(S_{\proeet},{\rm Solid})$ and they have the required properties (use the fact that the solid tensor product of solid abelian sheaves commutes with direct sums). Moreover,
 if $X$ is a generator of ${\rm Sh}(S_{\proeet},{\rm Solid})$ then $\underline{\Q}_p\otimes^{\Box}X$ is a generator of ${\rm Sh}(S_{\proeet},\underline{\Q}_{p,\Box})$ as easily follows, again,  from the fact that the solid tensor product of solid abelian sheaves commutes with direct sums.   This finishes our argument. 
  \end{proof}
     The categories of $\Q_p(Y)_{\Box}$-modules, $ Y\in S_{\proeet}$, being closed symmetric monoidal, the category ${\rm Sh}(S_{\proeet},\underline{\Q}_{p,\Box})$ is equipped with tensor product and internal $\Hom$:
For  $\sff,\sg\in {\rm Sh}(S_{\proeet},{\underline{\Q}_{p,\Box}})$, their tensor product $\sff\otimes^{\Box}_{\underline{\Q}_p}\sg$  is defined by sheafifying the presheaf tensor product
$$
\{Y\mapsto \sff(Y)\otimes^{\Box}_{\underline{\Q}_{p}(Y)}\sg(Y)\},\quad Y\in S_{\proeet}.
$$
We have an analog of Lemma \ref{tensor} in this setting. 
The internal $\Hom$,  $\Hhom^{\Box}_{S,\underline{\Q}_{p}}(\sff,\sg)$, can be defined as a presheaf by 
$$
\{Y\mapsto  \uHom^{\Box}_{Y, \underline{\Q}_{p}}(\sff_Y,\sg_Y)\in {\rm Mod}_{\Q_p(Y)}^{\rm solid}\},\quad Y\in S_{\proeet},
$$
where we set
\begin{equation}\label{coend1b}
\uHom^{\Box}_{Y,\underline{\Q}_{p}}(\sff_Y,\sg_Y)
:=\ker\Big(\prod_{Y_1\to Y}\uHom^{\Box}_{\Q_p(Y_1)}(\sff(Y_1),\sg(Y_1))\to \prod_{Y_2\to Y_1}\uHom^{\Box}_{\Q_p(Y_1)}(\sff(Y_1),\sg(Y_2))\Big).
\end{equation}
 It is actually a sheaf (compare with the condensed definition). Moreover, the image of the sheaf $\Hhom^{\Box}_{S,\underline{\Q}_{p}}(\sff,\sg)$ by the forgetful functor ${\rm Sh}(S_{\proeet},\underline{\Q}_{p,\Box})\to 
 {\rm Sh}(S^{\rm top}_{\proeet},\underline{\Q}_{p})$ is the sheaf $\Hhom_{S^{\rm top},\underline{\Q}_{p}}(\sff,\sg)$. 
In particular,  the  forgetful functor ${\rm Sh}(S_{\proeet},\underline{\Q}_{p,\Box})\to {\rm Sh}(S^{\rm top}_{\proeet},\underline{\Q}_{p})$ is (topologically) fully faithful. 
We have the usual adjointness property: 
\begin{lemma}\label{kaczka20}
If $\sff_1,\sff_2,\sff_3\in {\rm Sh}(S_{\proeet},\underline{\Q}_{p,\Box})$, there is a canonical functorial isomorphism in ${\rm Sh}(S_{\proeet},\underline{\Q}_{p,\Box})$
$$
\Hhom^{\Box}_{S,\underline{\Q}_{p}}(\sff_1\otimes^{\Box}_{\underline{\Q}_p}\sff_2, \sff_3)\stackrel{\sim}{\to} \Hhom^{\Box}_{S,\underline{\Q}_{p}}(\sff_1,\Hhom^{\Box}_{S,\underline{\Q}_{p}}(\sff_2, \sff_3)).
$$
In particular, we have a canonical functorial isomorphism in ${\rm Mod}^{\rm solid}_{\Q_p(S)}$
$$
\uHom^{\Box}_{S,\underline{\Q}_{p}}(\sff_1\otimes^{\Box}_{\underline{\Q}_p}\sff_2, \sff_3)\simeq \uHom^{\Box}_{S,\underline{\Q}_{p}}(\sff_1,\Hhom^{\Box}_{S,\underline{\Q}_{p}}(\sff_2, \sff_3)).
$$
 \end{lemma}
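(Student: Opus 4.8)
The plan is to deduce Lemma \ref{kaczka20} from the already-established adjointness in the condensed setting (Lemma \ref{adj1}) together with the fact, recorded just above, that the forgetful functor ${\rm Sh}(S_{\proeet},\underline{\Q}_{p,\Box})\to {\rm Sh}(S^{\rm top}_{\proeet},\underline{\Q}_{p})$ is (topologically) fully faithful and that it carries the solid internal $\Hom$ $\Hhom^{\Box}_{S,\underline{\Q}_{p}}(\sff,\sg)$ to the condensed one $\Hhom_{S^{\rm top},\underline{\Q}_{p}}(\sff,\sg)$. So the two sides of the claimed isomorphism, viewed in ${\rm Sh}(S^{\rm top}_{\proeet},\underline{\Q}_{p})$, are the corresponding condensed objects, and it suffices to check that the natural comparison map is an isomorphism there.

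First I would construct the natural map. The counit/unit formalism gives, for $\sff_2,\sff_3$, an evaluation map $\sff_2\otimes^{\Box}_{\underline{\Q}_p}\Hhom^{\Box}_{S,\underline{\Q}_{p}}(\sff_2,\sff_3)\to \sff_3$; tensoring with $\sff_1$ and using associativity-commutativity of $\otimes^{\Box}_{\underline{\Q}_p}$ (the analog of Lemma \ref{tensor}, cited right after its statement in the $\underline{\Q}_{p,\Box}$ setting) produces $(\sff_1\otimes^{\Box}_{\underline{\Q}_p}\sff_2)\otimes^{\Box}_{\underline{\Q}_p}\Hhom^{\Box}_{S,\underline{\Q}_{p}}(\sff_2,\sff_3)\to \sff_3$, whence by the adjunction defining $\Hhom^{\Box}_{S,\underline{\Q}_{p}}$ a map $\Hhom^{\Box}_{S,\underline{\Q}_{p}}(\sff_2,\sff_3)\to \Hhom^{\Box}_{S,\underline{\Q}_{p}}(\sff_1\otimes^{\Box}_{\underline{\Q}_p}\sff_2,\sff_3)$... but it is cleaner to run it the other direction: from the evaluation maps for both adjunctions one gets directly the comparison $\Hhom^{\Box}_{S,\underline{\Q}_{p}}(\sff_1\otimes^{\Box}_{\underline{\Q}_p}\sff_2,\sff_3)\to \Hhom^{\Box}_{S,\underline{\Q}_{p}}(\sff_1,\Hhom^{\Box}_{S,\underline{\Q}_{p}}(\sff_2,\sff_3))$, which is the standard "internal hom of a tensor" map and is functorial in all three arguments.

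Next I would check it is an isomorphism. Applying the forgetful functor and using that it sends $\Hhom^{\Box}_{S,\underline{\Q}_{p}}$ to $\Hhom_{S^{\rm top},\underline{\Q}_{p}}$, both sides become the corresponding objects built from the condensed internal $\Hom$ of sheaves of $\underline{\Q}_p$-modules on $S^{\rm top}_{\proeet}$. The required identity $\Hhom_{S^{\rm top},\underline{\Q}_{p}}(\sff_1\otimes_{\underline{\Q}_p}\sff_2,\sff_3)\simeq \Hhom_{S^{\rm top},\underline{\Q}_{p}}(\sff_1,\Hhom_{S^{\rm top},\underline{\Q}_{p}}(\sff_2,\sff_3))$ is the classical tensor-hom adjunction for sheaves of modules on a ringed site (\cite[Tag 03EO]{Stck}), already invoked in Section \ref{cond-kol1}; the only subtlety is that the condensed tensor product $\otimes^{\Box}_{\underline{\Q}_p}$ of solid sheaves is the sheafification of the objectwise solid tensor product, so one must know that this agrees, after forgetting, with a sheaf admitting the tensor-hom adjunction against $\Hhom_{S^{\rm top},\underline{\Q}_{p}}$ — but this is exactly the content of the analog of Lemma \ref{tensor}(2) together with the end/universal-property computation in the proof of Lemma \ref{tensor}. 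Concretely one reduces, via adjointness of sheafification and the end formula \eqref{coend1b}, to the module-level statement that for $\Q_p(Y_1)_\Box$-modules $\uHom^{\Box}_{\Q_p(Y_1)}(M\otimes^{\Box}_{\Q_p(Y_1)}N,P)\simeq \uHom^{\Box}_{\Q_p(Y_1)}(M,\uHom^{\Box}_{\Q_p(Y_1)}(N,P))$, which holds because each $\Q_p(Y)_\Box$ is a closed symmetric monoidal category (Proposition \ref{nyear1} and its analog for $\underline{\Q}_p$-modules). Finally, taking global sections $(\ast)$ of the isomorphism of sheaves and using that $\uHom^{\Box}_{S,\underline{\Q}_{p}}(-,-)(\ast)$ is the $\Hom$-set in ${\rm Mod}^{\rm solid}_{\Q_p(S)}$ yields the "in particular" clause.

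The main obstacle is purely bookkeeping: matching the sheafified objectwise solid tensor product $\otimes^{\Box}_{\underline{\Q}_p}$ against the internal $\Hom$ $\Hhom^{\Box}_{S,\underline{\Q}_{p}}$ through the sheafification and the end formula, i.e. confirming that the "expected relations" (the $\underline{\Q}_{p,\Box}$-analog of Lemma \ref{tensor}, which the text asserts without reproving) really do give the tensor-hom adjunction at the level of presheaves before sheafifying. Once that is in place, the isomorphism on sheaves follows formally from the pointwise closed-monoidal structure and commutes with sheafification, and the statement over $S$ follows by evaluating at $\ast$; there is no genuinely hard analytic input beyond what Proposition \ref{nyear1} already supplies.
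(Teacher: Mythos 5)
Your proposal is correct and follows essentially the same route as the paper, whose proof is precisely the one-line reduction to the condensed adjointness as in Lemma \ref{adj1} (i.e.\ to the classical ringed-site tensor–hom adjunction of \cite[Tag 03EO]{Stck}, using that the forgetful functor is fully faithful and carries $\Hhom^{\Box}_{S,\underline{\Q}_p}$ to $\Hhom_{S^{\rm top},\underline{\Q}_p}$); you merely spell out the solidification/sheafification bookkeeping that the paper leaves implicit. The only nitpick is the final deduction: the ``in particular'' clause comes from evaluating the sheaf isomorphism at $S$ (giving $\uHom^{\Box}_{S,\underline{\Q}_p}$ in ${\rm Mod}^{\rm solid}_{\Q_p(S)}$), not from evaluating at $\ast$, which would only return the underlying $\Hom$-set.
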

 \begin{proof}Reduce to the condensed adjointness as in the proof of Lemma \ref{adj1}.
 \end{proof}

 The category $ {\rm Sh}(S_{\proeet},\underline{\Q}_{p,\Box})$  is generated by the set 
\begin{equation}\label{koczkodan3}
\{\underline{\Q_p}[h^{\delta}_Y]\otimes^{\Box}_{\Z}\Z[T]\}, \quad Y\in {\rm Perf}_S, T - \mbox{ profinite set}.
\end{equation}
And we have an analog of Lemma \ref{koczkodan2} in this setting. 

  \vskip3mm
    ($\bullet$) {\em Derived picture.}  Again, we have an analogous derived picture. We start with the category $\sd(S_{\proeet},{\rm Solid})$. Since ${\rm Sh}(S_{\proeet},{\rm Solid})$ is a Grothendieck abelian category,  the internal $\Hom$, $\R\Hhom^{\Box}_{S}(\sff,\sg)$, for $\sff,\sg\in \sd(S_{\proeet},{\rm Solid})$, is defined using the existence of $K$-injective resolutions (which is a classical result, see \cite{Ser}, \cite[Tag 079I]{Stck}).
    
    The existence of $K$-flat resolutions, which follows from the existence of flat generators from \ref{koczkodan4},  yields the derived tensor product 
  $\sff\otimes^{\LL_{\Box}}\sg$, for $\sff, \sg\in \sd(S_{\proeet},{\rm Solid})$. 
  By the usual argument (\cite[Tag 08J9]{Stck}), the derived tensor product is the left (interior) adjoint to the derived interior $\Hom$.

   We pass now to the category $\sd(S_{\proeet},\underline{\Q}_{p,\Box})$.  The existence of  $K$-injective  resolutions follows from the fact that the  category 
   ${\rm Sh}(S_{\proeet},\underline{\Q}_{p,\Box})$ is Grothendieck abelian (see Lemma \ref{Gro1}). The existence of  $K$-flat resolutions
 follows  from the existence of flat generators of ${\rm Sh}(S_{\proeet},\underline{\Q}_{p,\Box})$ (see   \eqref{koczkodan3}).
 Hence we have  the derived tensor product 
  $\sff\otimes^{\LL_{\Box}}_{\underline{\Q}_p}\sg$, for $\sff, \sg\in\sd(S_{\proeet},{\underline{\Q}_{p,\Box}})$, the internal $\Hom$, $\R\Hhom^{\Box}_{S,\underline{\Q}_p}(\sff,\sg)$, which are (internal) adjoints to each other.

\subsubsection{Constant sheaves and monoidal structures}
Let $W\in K_{\Box}$ be a flat module and let\footnote{The definition and properties of the category ${\rm Sh}(S_{\proeet}, K_{\Box})$ is analogous to those of the category 
$ {\rm Sh}(S_{\proeet}, \Q_{p,\Box})$.}  $\sff\in {\rm Sh}(S_{\proeet}, K_{\Box})$.  We define  sheaves $  W\otimes^{\LL_{\Box}}_{K}\sff,   W\otimes^{\Box}_{K}\sff$  as presheaves by setting
  \begin{align*}
  W\otimes^{\LL_{\Box}}_{K}\sff:=\{Y\mapsto W\otimes^{\LL_{\Box}}_{K}\sff(Y)\},\\
    W\otimes^{\Box}_{K}\sff:=\{Y\mapsto W\otimes^{\Box}_{K}\sff(Y)\}.
  \end{align*}
  These are sheaves because $W$ is a flat module. We will need the following computations:
\begin{lemma}\label{form23} 
Let  $W\in \Q_{p,\Box}$ be a  Fr\'echet or of compact type. Let $\sff\in {\rm  Sh}(S_{\proeet}, \underline{\Q}_{p,\Box} )$.
\begin{enumerate}
\item    There is a canonical quasi-isomorphism in $\sd(S_{\proeet},\Q_{p,\Box})$
 \begin{equation}\label{form11-1}
  W\otimes^{\LL_{\Box}}_{\Q_{p}}\sff\stackrel{\sim}{\to}  \underline{W}\otimes^{\LL_{\Box}}_{\Q_{p}}\sff.
 \end{equation}
 
 \item   There is a natural isomorphism in $\Q_{p,\Box}$
$$\uHom^{\Box}_{S,\Q_p}(\underline{W} ,\sff) \stackrel{\sim}{\to}  \uHom^{\Box}_{ \Q_p} (W,\sff(S)).$$

\item   There is a natural quasi-isomorphism in $\sd(\Q_{p,\Box})$
$$\R\uHom^{\Box}_{S,\Q_p}(\underline{W},\sff) \simeq  \R\uHom_{\Q_{p,\Box}} (W,\R\Gamma(S^{\rm top}_{\proeet},\sff)).
$$
\item   If $W$ is of compact type then  there is a natural quasi-isomorphism in $\sd(\Q_{p,\Box})$
$$\R\uHom^{\Box}_{S,\Q_p}(\underline{W},\sff) \simeq  W^*\otimes^{\LL_{\Box}}_{\Q_p}\R\Gamma(S^{\rm top}_{\proeet},\sff).
$$
\end{enumerate}
\end{lemma}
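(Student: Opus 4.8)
The plan is to prove the four claims in sequence, each reducing to a property of solid $\Q_p$-modules (or $K_\Box$-modules) that is already available, with the sheaf statements deduced objectwise since all the constructions $\underline W \otimes^{\LL_\Box}_{\Q_p}(-)$, $\underline W \otimes^{\Box}_{\Q_p}(-)$ are defined presheaf-wise and remain sheaves by flatness of $W$.

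\textbf{Claim (1).} First I would recall that for a profinite set $T$ and a Banach (or Fréchet, or compact type) module $W$, there is a natural map $W \otimes^{\Box}_{\Q_p}\scc(T,\Q_p)\to \scc(T,W)=\underline W(T)$, and that this is an isomorphism: for $W$ Banach this is \cite[Cor. 10.5.4]{PGS} (used already in Example \ref{ex1}(2)); for $W$ Fréchet or of compact type one writes $W$ as a limit (resp. colimit) of Banach modules and uses that $\otimes^{\Box}_{\Q_p}\scc(T,\Q_p)$ commutes with the relevant limits/colimits in the solid world, as does $T\mapsto \scc(T,-)$. Hence $\underline W$, as a sheaf on $S^{\rm top}_{\proeet}$, is computed on the generators $(Y,T)$ by $W\otimes^{\Box}_{\Q_p}\scc(T,\Q_p)$ tensored up along $\Q_p\to \Q_p(Y)$. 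It follows that $\underline W$ is, objectwise, $W\otimes^{\Box}_{\Q_p}\underline{\Q}_p$ — that is, $W\otimes^{\LL_\Box}_{\Q_p}\underline{\Q}_p$ since $W$ is flat. Tensoring with $\sff$ over $\underline{\Q}_p$ and using associativity of $\otimes^{\LL_\Box}_{\underline{\Q}_p}$ (Lemma \ref{kaczka20} and the derived picture) gives $W\otimes^{\LL_\Box}_{\Q_p}\sff \simeq \underline W\otimes^{\LL_\Box}_{\Q_p}\sff$. The point to be careful about is that "constant sheaf $\underline W$" must agree with the presheaf tensor $W\otimes^{\Box}_{\Q_p}\underline{\Q}_p$ \emph{after sheafification}, which is exactly the content of the exponential-law isomorphism above on the generating site.

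\textbf{Claims (2) and (3).} For (2) I would use the end/kernel definition \eqref{coend1b} of $\uHom^{\Box}_{S,\underline{\Q}_p}(\underline W,\sff)$ together with the fact that $\underline W$ is the sheafification of $Y\mapsto W\otimes^{\Box}_{\Q_p}\scc(|Y|,\Q_p)$, reduce the computation to the presheaf level (adjointness of sheafification), and apply the solid adjunction $\uHom^{\Box}_{\Q_p(Y_1)}(W\otimes^{\Box}_{\Q_p}\scc(|Y_1|,\Q_p),\sff(Y_2))\simeq \uHom^{\Box}_{\Q_p}(W,\sff(Y_2))$ objectwise; the end over $Y_1\to Y$ then collapses because $W$ no longer appears in the indexing, leaving $\uHom^{\Box}_{\Q_p}(W,\sff(Y))$, which for $Y=S$ is the asserted $\uHom^{\Box}_{\Q_p}(W,\sff(S))$. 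Claim (3) is the derived version: replace $\sff$ by a $K$-injective resolution $\sf{I}^\bullet$ in $\sd(S_{\proeet},\underline{\Q}_{p,\Box})$, apply (2) termwise to get $\R\uHom^{\Box}_{S,\Q_p}(\underline W,\sff)\simeq \uHom^{\Box}_{\Q_p}(W,\sf{I}^\bullet(S))$; then I need $\sf{I}^\bullet(S)$ to compute $\R\Gamma(S^{\rm top}_{\proeet},\sff)$ — since $S$ is strictly totally disconnected it is acyclic for global sections on $S^{\rm top}_{\proeet}$ (cf. Lemma \ref{acyclic1} and the remark after it), so $\sf{I}^\bullet(S)\simeq \R\Gamma(S^{\rm top}_{\proeet},\sff)$ — and finally I need $\uHom^{\Box}_{\Q_p}(W,-)$ applied to this to be already derived, i.e. $W$ should have a finite (or at least bounded, projective in the solid sense) resolution, which holds for $W$ Fréchet or of compact type over $\Q_{p,\Box}$ by the structure theory of such modules (they have projective dimension $\le 1$). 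This last point — that $\uHom^{\Box}_{\Q_p}(W,-)$ need not be further derived, or is computed correctly against a $K$-injective — is the main technical obstacle and where I would be most careful; it is where the Fréchet / compact-type hypothesis is genuinely used.

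\textbf{Claim (4).} When $W$ is of compact type, its strong dual $W^*$ is a Fréchet space and there is a natural isomorphism $\R\uHom^{\Box}_{\Q_p}(W,M)\simeq W^*\otimes^{\LL_\Box}_{\Q_p}M$ for $M\in \sd(\Q_{p,\Box})$ — this is the standard solid "reflexivity/nuclearity" statement (compact type modules are dualizable up to the expected shift, with $\R\uHom^{\Box}_{\Q_p}(W,\Q_p)\simeq W^*$ concentrated in degree $0$). Plugging $M=\R\Gamma(S^{\rm top}_{\proeet},\sff)$ into claim (3) then gives claim (4) immediately. So the work here is just invoking the duality for compact type solid $\Q_p$-modules and combining with (3); no new sheaf-theoretic input is needed. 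Throughout, I expect the only genuine subtlety to be the interplay between sheafification and the solid tensor/Hom on the generating site in claims (1)–(2), and the homological dimension bound on $W$ in claim (3); everything else is formal manipulation of the adjunctions recorded in Lemmas \ref{adj1}, \ref{kaczka20} and the derived picture.
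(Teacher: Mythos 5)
Your treatment of claims (1), (2) and (4) follows essentially the paper's route: identify $\underline{W}$ objectwise on strictly totally disconnected $Y$ with $W\otimes^{\Box}_{\Q_p}\underline{\Q}_p$ via the exponential-law isomorphism $\scc(\pi_0(Y),W)\simeq W\otimes^{\Box}_{\Q_p}\scc(\pi_0(Y),\Q_p)$ (the paper quotes \cite[Cor. 10.5.4]{PGS} directly for Fr\'echet and compact type $W$, where you interpolate from the Banach case by limits/colimits -- acceptable, though that commutation of $\otimes^{\Box}\scc(T,\Q_p)$ with the relevant limits deserves a justification), then collapse the end in (2), and in (4) invoke the duality $\R\uHom_{\Q_{p,\Box}}(W,M)\simeq W^*\otimes^{\LL_{\Box}}_{\Q_p}M$ for compact type $W$, as the paper implicitly does.

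The genuine problem is in your claim (3), exactly at the step you flag as the main obstacle. After applying (2) termwise to an injective resolution $\sii^{\bullet}$ of $\sff$, you must identify $\uHom^{\Box}_{\Q_p}(W,\sii^{\bullet}(S))$ with $\R\uHom_{\Q_{p,\Box}}(W,\R\Gamma(S^{\rm top}_{\proeet},\sff))$, and neither of your two justifications works. First, acyclicity of $S$ is both unavailable ($S$ is an arbitrary affinoid perfectoid in this section, not assumed strictly totally disconnected) and irrelevant: $\sii^{\bullet}(S)$ computes $\R\Gamma(S^{\rm top}_{\proeet},\sff)$ by the very definition of derived global sections. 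Second, and more seriously, a bound on the projective dimension of $W$ does not make the underived $\uHom^{\Box}_{\Q_p}(W,-)$ compute the derived functor on an arbitrary complex: if $\Ext^1(W,M)\neq 0$ then $\uHom(W,M)\neq\R\uHom(W,M)$ regardless of how short a projective resolution $W$ admits; such a resolution only helps if you replace $W$ by it, which is not what your argument does (and the asserted ``projective dimension $\le 1$'' for Fr\'echet/compact type solid $\Q_p$-modules is itself unproved, and is not where the hypothesis on $W$ actually enters -- it enters in (1)--(2) via the exponential law and in (4) via duality). The correct and essentially formal fix, implicit in the paper's one-line deduction of (3)--(4) from (1)--(2), is that evaluation at $S$ has an exact left adjoint (the constant-sheaf functor), hence sends injective sheaves to injective solid modules; therefore $\sii^{\bullet}(S)$ is a bounded-below complex of injectives, in particular K-injective, and $\uHom^{\Box}_{\Q_p}(W,\sii^{\bullet}(S))$ computes the derived internal Hom with no hypothesis on the homological dimension of $W$. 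With that substitution your outline of (3), and hence of (4), goes through.
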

\begin{proof}  We start with the first claim. As presheaves both sides of \eqref{form11-1} are given by 
  \begin{align*}
  \{Y\mapsto W\otimes^{\LL_{\Box}}_{\Q_{p}}\sff(Y)\},\quad 
    \{Y\mapsto \underline{W}(Y)\otimes^{\LL_{\Box}}_{\Q_{p}(Y)}\sff(Y)\},
  \end{align*}
  respectively. 
  For $Y$ strictly totally disconnected,  we compute 
  \begin{align*}
  \underline{W}(Y)\otimes^{\LL_{\Box}}_{\Q_{p}(Y)}\sff(Y) & =\scc(\pi_0(Y),W)\otimes^{\LL_{\Box}}_{\Q_{p}(Y)}\sff(Y)\simeq W{\otimes}^{{\Box}}_{\Q_{p}}\scc(\pi_0(Y),\Q_p)\otimes^{\LL_{\Box}}_{\Q_{p}(Y)}\sff(Y)\\
  &  \simeq W\otimes^{\LL_{\Box}}_{\Q_{p}}\sff(Y)
  \end{align*}
   Here, the second quasi-isomorphism holds by \cite[Cor. 10.5.4]{PGS}. Our claim follows.
  
  For the second claim, we use the  end description
  \begin{align*}
  \uHom^{\Box}_{S,\Q_p}(\underline{W} ,\sff)  & \stackrel{\sim}{\to}  \uHom^{\Box}_{S,\Q_p}({W} \otimes^{\Box}_{\Q_{p}}\underline{\Q}_p,\sff)  = \int_{Y}  \uHom_{\Q_p(Y)}(W\otimes^{\Box}_{\Q_{p}}\Q_p(Y) ,\sff(Y))\\
   & \simeq \int_{Y} \uHom_{\Q_p}(W ,\sff(Y))\simeq  \uHom_{\Q_p}(W ,\int_{Y} \sff(Y))\simeq  \uHom_{\Q_p} (W,\sff(S)).
  \end{align*}
  Here, the first isomorphism follows from the first claim of the lemma. 
  
  The last two claims  of the lemma follow from the first two ones by applying  them  to an injective resolution of $\sff$. 
\end{proof}

  \subsubsection{Condensed versus solid sheaves}
The following result is a sheaf version of Proposition \ref{nyear1}. 
  \begin{proposition}\label{nyear1a}
  \begin{enumerate}
 \item  The forgetful functor
 \begin{equation}\label{form1a}
 {\rm Sh}(S_{\proeet},{\rm Solid})\to  {\rm Sh}(S_{\proeet},{\rm CondAb})
 \end{equation}
 is  (topologically) fully faithful. The essential image is
  stable under all limits, colimits, and extensions.  Moreover, the
inclusion \eqref{form1a} admits a (topological)  left adjoint, the solidification functor, 
\begin{equation}
 \label{form11a}
{\rm Sh}(S_{\proeet},{\rm CondAb})\to  {\rm Sh}(S_{\proeet},{\rm Solid}): \quad \sff \mapsto \sff^{\Box}, 
 \end{equation}
which   preserves all colimits and is  symmetric monoidal. We have analogous claims for the forgetful functor  ${\rm Sh}(S_{\proeet},\Q_{p,\Box}) \to {\rm Sh}(S^{\rm top}_{\proeet},\Q_p)$.
\item 
  The  forgetful functor
  \begin{equation}\label{form2a}
  \sd(S_{\proeet},{\rm Solid})\to \sd(S_{\proeet},{\rm CondAb})
  \end{equation}
   is  (topologically) fully faithful.   It preserves  all limits and colimits. 
\item The functor \eqref{form2a}  admits a  left adjoint
$$\sff  \mapsto  \sff^{\LL_{\Box}}: \sd(S_{\proeet},{\rm CondAb}) \to \sd(S_{\proeet},{\rm Solid}),
$$
which is the (topological)  left derived functor of the solidification functor $(-)^{\Box}$.   It is symmetric monoidal. We have analogous claims for the forgetful functor $\sd(S_{\proeet},\underline{\Q}_{p,\Box})\to \sd(S^{\rm top}_{\proeet},\underline{\Q}_{p})$.
\item For  $\sff_1, \sff_2 \in \sd(S_{\proeet},{\rm Solid})$, the natural map
$$\R{\Hhom}^{\Box}_{S}(\sff_1,\sff_2) \to  \R{\Hhom}_{S^{\rm top}}(\sff_1,\sff_2)$$
is a quasi-isomorphism. We have an analogous claim for the $\underline{\Q}_p$-sheaves.
\end{enumerate}
  \end{proposition}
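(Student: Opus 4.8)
The plan is to bootstrap everything from the module‑level Proposition \ref{nyear1} together with the sheaf‑level comparisons already in hand — Lemma \ref{solidification1}, Lemma \ref{tensor}, Lemma \ref{adj1}, Lemma \ref{kaczka20}, and the identification of the internal Hom $\Hhom^{\Box}_{S}$ of solid sheaves with $\Hhom_{S^{\rm top}}$ — using repeatedly that tensor product, internal Hom and solidification of sheaves are obtained by performing the objectwise operation on presheaves and then sheafifying, and that sheafification is exact and built from finite limits and filtered colimits, under which ${\rm Solid}\subseteq{\rm CondAb}$ is stable (Proposition \ref{nyear1}(1)). For part (1), full faithfulness of the forgetful functor $\iota$ and the existence of its left adjoint $(-)^{\Box}$ are precisely Lemma \ref{solidification1} and its $\underline{\Q}_p$‑linear analogue. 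Stability of the essential image under limits, colimits, kernels and cokernels is checked on sections: these are, up to sheafification, the objectwise constructions, ${\rm Solid}$ has the corresponding stability in ${\rm CondAb}$ by Proposition \ref{nyear1}(1), and sheafification preserves objectwise solidity; in particular $\iota$ is exact. Stability under extensions is subtler and I would postpone it to after parts (2)–(3): given $0\to\sff'\to\sff\to\sff''\to 0$ in ${\rm Sh}(S_{\proeet},{\rm CondAb})$ with $\sff',\sff''$ solid, apply the derived solidification; since $(\sff')^{\LL_{\Box}}\simeq\sff'$ and $(\sff'')^{\LL_{\Box}}\simeq\sff''$ by part (2), the triangle $(\sff')^{\LL_{\Box}}\to\sff^{\LL_{\Box}}\to(\sff'')^{\LL_{\Box}}$ forces $\sff^{\LL_{\Box}}$ to be concentrated in degree $0$ and equal to $\sff$, i.e. $\sff$ is solid. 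Symmetric monoidality of $(-)^{\Box}$ follows from Lemma \ref{tensor}(2) and the monoidality of solidification of modules.

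I would do part (3) next, as it is formal and feeds parts (2) and (4). Since $(-)^{\Box}$ on sheaves is right exact (a left adjoint) and both categories are Grothendieck abelian, its total left derived functor $(-)^{\LL_{\Box}}$ exists, computed on $\Box$‑acyclic resolutions — e.g. resolutions by the flat generators $\Z[h^{\delta}_Y]\otimes\Z[T]$ of \eqref{pierre10}, which are $\Box$‑acyclic because $\Z[h^{\delta}_Y]$ is discrete, hence solid, and $\Z[T]$ is $\Box$‑acyclic in ${\rm CondAb}$. As $\iota$ is exact, $(-)^{\LL_{\Box}}$ is left adjoint to $\iota$ on the $\infty$‑derived categories, and it preserves limits and colimits there. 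For symmetric monoidality, represent $\sff,\sg$ by $K$‑flat complexes built from these generators; the tensor product stays $K$‑flat, the termwise solidifications are $K$‑flat in the solid category, and Lemma \ref{tensor}(2) identifies $(\sff\otimes^{\LL}\sg)^{\LL_{\Box}}$ with $\sff^{\LL_{\Box}}\otimes^{\LL_{\Box}}\sg^{\LL_{\Box}}$ compatibly with the unit, associativity and symmetry constraints; the same runs in the $\underline{\Q}_p$‑linear setting, using \eqref{koczkodan4}.

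Part (2) is the crux. Being exact, $\iota$ descends to derived categories and preserves limits and colimits there. For full faithfulness it suffices, by the derived adjunction of part (3), to show that the counit $(\iota M)^{\LL_{\Box}}\to M$ is a quasi‑isomorphism for every $M\in\sd(S_{\proeet},{\rm Solid})$. Both functors $M\mapsto(\iota M)^{\LL_{\Box}}$ and $M\mapsto M$ preserve colimits and shifts, so it is enough to treat a single solid sheaf $\sff$ in degree $0$. Resolving $\iota\sff$ by a $\Box$‑acyclic complex $\mathcal{P}^{\bullet}$ of generators $\Z[h^{\delta}_{Y'}]\otimes\Z[T]$, one has $(\iota\sff)^{\LL_{\Box}}=(\mathcal{P}^{\bullet})^{\Box}$, whose zeroth cohomology is $\sff^{\Box}=\sff$ and whose lower cohomology sheaves are $L_i\Box\,\sff$; evaluating on $w$‑contractible $Y$ (which form a basis), where $\Gamma(Y,-)$ is exact and the $\mathcal{P}^{j}(Y)$ stay $\Box$‑acyclic modules, one gets $H^{-i}\big((\mathcal{P}^{\bullet}(Y))^{\Box}\big)=L_i\Box\,(\sff(Y))=0$ for $i>0$ by Proposition \ref{nyear1}(2), since $\sff(Y)$ is solid. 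Hence the counit is a quasi‑isomorphism; the $\underline{\Q}_p$‑linear case is identical.

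Part (4) is then formal, and at the same time this is where I would flag the only genuine difficulty as lying in part (2). Write $H=\R\Hhom_{S^{\rm top}}(\sff_1,\sff_2)\in\sd(S_{\proeet},{\rm CondAb})$. For an arbitrary $\sg\in\sd(S_{\proeet},{\rm CondAb})$, chaining the condensed internal‑Hom adjunction, the derived adjunction $\R\Hom(-,\iota(-))\simeq\R\Hom((-)^{\LL_{\Box}},-)$ of part (3), the monoidality of $(-)^{\LL_{\Box}}$ together with $(\sff_1)^{\LL_{\Box}}\simeq\sff_1$ from part (2), and finally the solid internal‑Hom adjunction (Lemma \ref{adj1}, Lemma \ref{kaczka20}), one identifies $\R\Hom(\sg,H)$ with $\R\Hom\big(\sg,\R\Hhom^{\Box}_{S}(\sff_1,\sff_2)\big)$ functorially in $\sg$; Yoneda then shows that the natural comparison map $\R\Hhom^{\Box}_{S}(\sff_1,\sff_2)\to H$ is a quasi‑isomorphism, and the same argument covers the $\underline{\Q}_p$‑linear statement. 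Thus parts (1), (3) and (4) reduce to a citation plus objectwise bookkeeping, a formal derived‑functor argument, and adjunction‑chasing respectively, while the derived full faithfulness of part (2) is the substantive point: one must transport the idempotency of solidification from modules to sheaves and, because the complexes are unbounded, arrange that the resolutions used are simultaneously flat, $\Box$‑acyclic, and computed by $w$‑contractible sections — once this is in place, Proposition \ref{nyear1}(2) finishes it.
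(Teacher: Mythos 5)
Your core argument for the crux, part (2), is essentially the paper's: you resolve a solid sheaf by the projective generators $\Z[h^{\delta}_{Y}]\otimes\Z[T]$ ($Y$ w-contractible, $T$ extremally disconnected), evaluate on w-contractibles, and use that the sections of the sheaf are solid while the sections of the generators are acyclic for derived solidification (this acyclicity is exactly the paper's Lemma \ref{paris20} on prodiscrete sets, which you reprove in the special case you need); you phrase the conclusion as ``the counit $(\iota\sff)^{\LL_{\Box}}\to\sff$ is an equivalence'' where the paper phrases it as the internal-Hom comparison, but these are equivalent. Two organizational differences are genuine and worth noting. First, for stability under extensions in (1) you invoke derived idempotency from (2), whereas the paper gives a direct argument using the exact sequence $0\to\sff_1(Y)\to\sff_2(Y)\to\sff_3(Y)\to H^1_{\proeet}(Y,\sff_1)$ and solidity of $H^1$; your route is valid and not circular, since neither your (2) nor your (3) uses extension-stability, and the five-lemma comparison of $\sff$ with $\sff^{\Box}$ closes it. Second, you prove symmetric monoidality of $(-)^{\LL_{\Box}}$ directly from resolutions and Lemma \ref{tensor}, and then deduce (4) by adjunction-chasing; the paper goes the other way, extracting (4) directly from the resolution argument in (2) (applied to the twists by $\Z[T]$) and then obtaining monoidality purely formally from adjunction plus (4).

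The one step you should tighten is precisely that direct monoidality argument. If $P,Q$ are $K$-projective complexes with terms direct sums of generators, the terms of $P\otimes Q$ are of the form $\Z[h^{\delta}_{Y_1\times_S Y_2}]\otimes\Z[T_1\times T_2]$, and these are in general no longer projective ($Y_1\times_S Y_2$ need not be w-contractible, $T_1\times T_2$ need not be extremally disconnected); so it is not automatic that the termwise solidification $(P\otimes Q)^{\Box}$ computes $(\sff\otimes^{\LL}\sg)^{\LL_{\Box}}$. You either need a separate argument that such terms are acyclic for the derived solidification of sheaves (again by evaluating on w-contractible sections, where they become $\Z$ of prodiscrete sets, i.e. Lemma \ref{paris20}, together with the basis argument that vanishing of presheaf cohomology on w-contractibles kills the sheafified cohomology -- the same care is needed where you silently identify $(\mathcal{P}^{j})^{\Box}(Y)$ with $(\mathcal{P}^{j}(Y))^{\Box}$), or you should reorganize as the paper does: prove (4) first out of the resolution argument and then get monoidality formally -- but in that case your proof of (4) must be rewritten, since as stated it uses monoidality. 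With that repair the proposal is complete and correct.
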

  \begin{proof} We start with the first claim. The fully-faitfulness was shown above, in Section \ref{solid1} (see Lemma \ref{solidification1}). Limits of solid sheaves in the condensed setting are solid because this is true for presheaves; for colimits -- this holds on the level of presheaves and condensed sheafification is compatible with solid sheafification. For extensions: If $0\to \sff_1\to \sff_2\to \sff_3\to 0 $ is an extension of condensed sheaves such that $\sff_1,\sff_3$ are solid then, on every $Y\in S_{\proeet}$,  we have
  an exact sequence in ${\rm CondAb}$
  $$
  0\to \sff_1(Y)\to \sff_2(Y)\to \sff_3(Y)\stackrel{f}{\to} H^1_{\proeet}(Y,\sff_1). 
  $$
 The cohomology group $H^1_{\proeet}(Y,\sff_1)$ is solid (because so is $\sff_1$), hence so is 
the kernel of the map~$f$.  Thus, by Proposition \ref{nyear1}, $\sff_2(Y)$ is solid, as wanted.  The claim about solidification follows from Lemma \ref{solidification1}, Lemma \ref{tensor}, and Proposition  \ref{nyear1}.

  
   We turn now to the second claim of the proposition. For fully-faitfulness, 
for $\sff,\sg\in \sd(S_{\proeet},{\rm Solid})$, we need to show that the canonical map 
$$
\R\uHom^{\Box}_{S}(\sff,\sg  ) \to \R\uHom_{S^{\rm top}}(\sff,\sg  )
$$
is a quasi-isomorphism.  We will need the following fact:
\begin{lemma}\label{paris20}
If $W$ is a prodiscrete condensed set (i.e. a cofiltered limit of discrete condensed
sets)  then $\Z[W]^{\LL_{\Box}}\stackrel{\sim}{\to} \Z[W]^{\Box}$. Similalry with $\underline{\Q}_p(S)$-coefficients, for $S\in {\rm Perf}_C$.
\end{lemma}
\begin{proof}  Recall that  every prodiscrete set   can be written as a filtered colimit of profinite sets.
So let us write $W=\colim_iT_i$, a filtered colimit of profinite sets. We have
$$
\Z[W]^{\LL_{\Box}}=\Z[\colim_iT_i]^{\LL_{\Box}}\simeq(\colim_i\Z[T_i])^{\LL_{\Box}}\simeq \colim_i\Z[T_i]^{\LL_{\Box}}\stackrel{\sim}{\to} \colim_i\Z[T_i]^{\Box}\simeq \Z[W]^{\Box},
$$
which is what we wanted. 

   The $\Q_p$-case follows from the fact that $\Z[T]$ is a flat solid abelian group.
\end{proof}
This implies that if we take a resolution of $\sff$ in ${\rm Sh}(S_{\proeet}, {\rm CondAb})$
$$
\cdots \to P_1\to P_0\to \sff\to 0,
$$
where $P_j=\oplus_j(\Z[h^{\delta}_{Y_j}]\otimes\Z[T_j])$ for a w-contractible $Y_j$ over $S$, an extremally diconnected $T_j$, the solidification
$$
\cdots \to P_1^{\Box}\to P_0^{\Box}\to \sff\to 0
$$
is also exact.  Indeed, it suffices to show that 
$$
\cdots \to P_1(X)^{\Box}\to P_0(X)^{\Box}\to \sff(X)^{\Box}\to 0,
$$
is exact for a w-contractible $X$ over $S$. But this follows from the quasi-isomorphism $\sff(X)^{\LL_{\Box}}\stackrel{\sim}{\to} \sff(X)^{\Box}$, since $\sff(X)$ is solid,  and Lemma \ref{paris20}.

Using  that the $P_i$'s above are projective in   ${\rm Sh}(S_{\proeet}, {\rm CondAb})$ and their solidifications are projective in ${\rm Sh}(S_{\proeet}, {\rm Solid})$, the fully faithfulness claim is   reduced to the fact that the map
\begin{equation}\label{koczkodan5}
\Hom^{\Box}_{S}(\Z[h^{\delta}_Y]\otimes^{\Box}\Z[T],\sg  ) \to \Hom_{S^{\rm top}}(\Z[h^{\delta}_Y]\otimes\Z[T],\sg  )
\end{equation}
is an isomorphism for a w-contractible $Y$ over $S$, an extremally diconnected $T$, and a solid sheaf $\sg$. But this follows from the first claim of the proposition. The topological version can be derived by the same argument applied to the tensor products $\sff\otimes^{\Box}\Z[T]$'s.

This also proves the fourth claim of the proposition. The statement about  colimits  in claim (2) follows by evaluating sheaves on w-contractible perfectoids. For limits -- 
we use the fact that in both categories we have, for $Y$ over $S$ w-contractible, we have  $(\R\lim_{i\in I}\sff_i)(Y)\simeq \R\lim_{i\in I}\sff_i(Y)$ and, by Proposition \ref{nyear1}, the latter limits are the same in the solid and condensed categories.

   For claim (3), since we have enough projectives, the left derived functor $(-)^{\LL_{\Box}}$ of $(-)^{\Box}$ exists and it remains to show that it is the left adjoint of the map \eqref{form1a}. But this can be checked on  $\uHom$'s and for projectives $\Z[h^{\delta}_{Y}]\otimes\Z[T]$ as in \eqref{koczkodan5} and then we can use claim (1).
     This also  implies inner left adjointness, which, in turn, implies compatibility with monoidal structures via the following computation: We want to show that, for $\sff_1, \sff_2\in \sd(S_{\proeet},{\rm CondAb})$, we have 
   \begin{equation}\label{chicago1}
  ( \sff_1\otimes^{\LL}\sff_2)^{\LL_{\Box}}\simeq \sff_1^{\LL_{\Box}}\otimes^{\LL_{\Box}}\sff_2^{\LL_{\Box}}
   \end{equation}
   in $\sd(S_{\proeet}, {\rm Solid})$. 
   But, for $\sg\in\sd(S_{\proeet}, {\rm Solid})$, by the  already proven adjointness properties and claim (4) of our proposition, we have the following quasi-isomorphims
   \begin{align*}
   \R\Hom^{\Box}_{S}((\sff_1\otimes^{\LL}\sff_2)^{\LL_{\Box}},\sg) & \simeq  \R\Hom_{S^{\rm top}}(\sff_1\otimes^{\LL}\sff_2,\sg)\simeq \R\Hom_{S^{\rm top}}(\sff_1,\R\Hhom_{S^{\rm top}}(\sff_2,\sg))\\
    & \stackrel{\sim}{\leftarrow} \R\Hom_{S^{\rm top}}(\sff_1,\R\Hhom^{\Box}_{S}(\sff^{\LL_{\Box}}_2,\sg))
     \simeq  \R\Hom^{\Box}_{S}(\sff^{\LL_{\Box}}_1, \R\Hhom^{\Box}_{S}(\sff^{\LL_{\Box}}_2,\sg))   \\
      & \simeq  \R\Hom^{\Box}_{S}(\sff^{\LL_{\Box}}_1\otimes^{\LL_{\Box}}\sff^{\LL_{\Box}}_2,\sg),
   \end{align*}
   as wanted. 
  
    Claim (3)  for $\underline{\Q}_p$-coefficients follows by the same argument as in the abelian case. 
  \end{proof}
 
\begin{remark}  Recall that sheaves of $\underline{\Q}_p$-modules on ${\rm Spa}(C)_{\proeet}$ are called Vector Spaces (${\rm VS}$ for short). Following this, we will call sheaves from ${\rm Sh}({\rm Spa}(C)_{\proeet},\underline{\Q}_{p,\Box})$ {\em Naive Topological Vector Spaces} (${\rm NTVS}$ for short).
\end{remark}
\subsubsection{\'Etale Naive Topological Vector Spaces}  The constructions from Section \ref{defnaive} and Section \ref{naive52} have \'etale versions though one needs to be  a bit careful so that all of them go through. 

  Let $S$ be  a strictly totally disconnected affinoid perfectoid  over $C$. Denote by ${\rm sPerf}_S$ the category of strictly totally disconnected affinoid perfectoid spaces over $S$.  Proceeding as in Section \ref{naive-521} we can define the topological \'etale site $S^{\rm top}_{\eet}$ based on the category ${\rm sPerf}_S$ equipped with \'etale topology.  The constructions from Section \ref{defnaive} and Section \ref{naive52} go through with two exceptions: we do not have the map $\eta$ from Section \ref{naive-521} and 
  in Lemma \ref{koczkodan2} we can take all $Y$'s. 
  
  In fact, later on, we will go even further and often drop the topology altogether (but still work  with the category ${\rm sPerf}_S$). 
  
\section{Topological Vector Spaces} 
We pass now to our main object of study: the categories of topologically enriched presheaves.
We start with the definition and properties of topology on mapping spaces between perfectoid affinoids over $C$. Then we define topologically enriched presheaves with  values in condensed and solid abelian groups, discuss examples and the Enriched Yoneda Lemma. After that we prove that topologically enriched presheaves  form Grothendieck abelian categories, study their $\infty$-derived categories, the solidification functors, and monoidal structures. Finally, we extend all of this to the categories of $\underline{\Q}_p$-modules.
\subsection{Topologically enriched  presheaves}\label{ias1}
Many of the topological presheaves    we will be working with will be canonically
 enriched.  
 \begin{remark} The main technical tool that we need that convinced us to use topologically enriched presheaves is the enriched Yoneda Lemma that is needed in the proof of Theorem \ref{duck1} and Theorem \ref{lebrasII}  (used to  relate topological $\R\Hom$'s to the algebraic ones and the ones for perfect complexes on the Fargues-Fontaine curve). The consideration of topologically enriched presheaves  is not new:
 \begin{enumerate}
 \item It appears implicitly in the original definition of Banach-Colmez spaces in  \cite{CF} (Banach-Colmez spaces are presheaves valued in Banach spaces and are topologically enriched) and also explicitly\footnote{Though at the time of writing \cite{CN5} the authors were not aware that the extra continuity property they have imposed amounts to the notion of enrichment in category theory.}  in the definition of ${\rm qBC}$'s in \cite{CN5}. 
 
 \item Topologically enriched functors in the context of the theory of Banach spaces were studied for a while: see, for example, \cite{HP}, \cite{Pel}. They are called "strong functors" there. More generally, the papers \cite{Cas1}, \cite{Cas2} survey the categorical approach to the theory of Banach spaces. 
 \end{enumerate}
\end{remark}

 Our main references for enriched categories and enriched functors  are   \cite{Kelly}, \cite{GM}, \cite[App. C]{HM}.
 
  \subsubsection{Topologized mapping space} Let $S\in {\rm Perf}_C$. We will enrich the category ${\rm Perf}_S$ in ${\rm Cond}$.  For $Y_1, Y_2\in{\rm Perf}_S$, we set: 
$$
\uHom_S(Y_1,Y_2): \{T\mapsto \Hom_S(Y_1\times T,Y_2)\}.
$$ For every $Y\in {\rm Perf}_S$, the {\em identity map}  $i_X:\{*\}\to \uHom_S(Y,Y)$ sends $\{*\}$ to ${\rm Id}$. For every triple $Y_1,Y_2,Y_3\in {\rm Perf}_S$, the {\em composition} map
$$
\uHom_S(Y_2,Y_3)\otimes \uHom_S(Y_1,Y_2)\to \uHom_S(Y_1,Y_3)
$$
is defined by compatible compositions, for  profinite sets $T$, 
$$
\Hom_S(Y_2\times T,Y_3)\otimes \Hom_S(Y_1\times T,Y_2)\to \Hom_S(Y_1\times T,Y_3),
$$
which send the pair $(f_2,f_1)$ to the composition 
$$
f_2f_1:\quad Y_1\times T\lomapr{{\rm Id}\times \Delta} Y_1\times T\times T\lomapr{f_1\times{\rm Id} }Y_2\times T\lomapr{f_2}Y_3.
$$
The associativity and unit axioms (see \cite[(1.3), (1.4)]{Kelly}) are easily checked to hold.

\begin{lemma}\label{prodiscrete} 
The condensed set $ \uHom_S(Y_1,Y_2)$  is prodiscrete.
\end{lemma}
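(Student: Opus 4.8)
The plan is to unwind the definition of $\uHom_S(Y_1,Y_2)$ as the condensed set $T\mapsto \Hom_S(Y_1\times T, Y_2)$ and show that this presheaf on $\ast_{\proeet}$ is a cofiltered limit of discrete condensed sets. The key structural input is that $Y_1$ is an affinoid perfectoid, say $Y_1 = \Spa(R,R^+)$, and that for a profinite set $T=\lim_i T_i$ written as a cofiltered limit of finite sets, one has $Y_1\times T = \lim_i (Y_1\times T_i)$, with each $Y_1\times T_i$ a finite disjoint union of copies of $Y_1$. Since maps out of a cofiltered limit of spectral spaces (in the adic/affinoid setting) factor through a finite stage — more precisely, $\Hom_S(Y_1\times T, Y_2) = \colim_i \Hom_S(Y_1\times T_i, Y_2)$ because $Y_2$ is qcqs and $Y_1\times T$ is the cofiltered limit of the $Y_1\times T_i$ along closed immersions / quotient maps — we get that $\uHom_S(Y_1,Y_2)(T) = \colim_i \Hom_S(Y_1\times T_i, Y_2)$, which is a filtered colimit of sets of the form $\Hom_S(Y_1, Y_2)^{T_i}$, i.e.\ a filtered colimit of discrete sets indexed by the finite sets $T_i$.

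Concretely, I would first fix notation: write $D := \Hom_S(Y_1, Y_2)$, viewed as an abstract (discrete) set. For a finite set $T_i$, $\Hom_S(Y_1\times T_i, Y_2) = D^{T_i} = \scc(T_i, D)$. The transition maps in the cofiltered system $T = \lim_i T_i$ induce, upon applying $\Hom_S(Y_1\times(-), Y_2)$, maps $D^{T_i}\to D^{T_j}$ for $T_j\to T_i$; these are the obvious restriction-along-surjection maps (using the composition law spelled out in the text via the diagonal $\Delta$, which on the $T$-component is exactly precomposition with $T_j\to T_i$). Hence $\uHom_S(Y_1,Y_2)(T) = \colim_{i}\, D^{T_i}$. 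Now I claim this is precisely $\underline{W}(T)$ for $W$ the profinite-like condensed structure... more carefully: the functor $T\mapsto \colim_i \scc(T_i,D)$ is the condensed set $\underline{\colim_i \underline{T_i^\vee}}$ — rather, the cleanest statement is that $\uHom_S(Y_1,Y_2) = \colim_i \underline{D^{T_i}}$ as a condensed set, where each $\underline{D^{T_i}}$ is discrete (as $D^{T_i}$ is a set and $T_i$ finite), and this filtered colimit of discrete condensed sets is, by definition, prodiscrete. Actually the sharpest route: show $\uHom_S(Y_1,Y_2)$, evaluated at a profinite $T$, equals the continuous maps from $T$ to the set $D$ given the topology making it $\colim_i D^{T_i}$... but it is simplest to observe directly that $\uHom_S(Y_1,Y_2) \cong \varprojlim$ over the structure, or rather that the assignment $T\mapsto \Hom_S(Y_1\times T, Y_2)$ is representable by a cofiltered limit of discrete condensed sets because it commutes with cofiltered limits of profinite sets and is determined on finite sets by $D^{(-)}$.

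So the steps in order are: (1) reduce to $Y_1$ affinoid and write a general profinite $T$ as $\lim_i T_i$ over finite sets; (2) prove $\Hom_S(Y_1\times T, Y_2) = \colim_i \Hom_S(Y_1\times T_i, Y_2)$ using that $Y_2$ is qcqs and maps from a cofiltered limit of qcqs adic spaces factor through a stage (the standard limit argument, e.g.\ as in \cite[Sec.~7]{SchD} or Huber's theory); (3) identify $\Hom_S(Y_1\times T_i, Y_2)$ with $\scc(T_i, \Hom_S(Y_1,Y_2))$ for finite $T_i$, so the whole condensed set is $\colim_i \underline{\scc(T_i,D)}$ with $D = \Hom_S(Y_1,Y_2)$ discrete; (4) recall that every profinite set is a cofiltered limit of finite sets and hence $\colim_i \underline{\scc(T_i,D)}$, being a filtered colimit of discrete condensed sets, is by definition prodiscrete — equivalently it is the condensed set associated to the topological space $D$ with the colimit topology along the $D^{T_i}$, and one checks this is a cofiltered limit of discrete sets. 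The main obstacle I expect is step (2): making precise that $\Hom_S(\lim_i (Y_1\times T_i), Y_2) = \colim_i \Hom_S(Y_1\times T_i, Y_2)$ — this requires knowing that the cofiltered limit is computed in the category of adic/perfectoid spaces (so $Y_1\times T = \Spa(\scc(T,R), \scc(T,R^+))$, cf.\ the text's description $\wh{T} = \Spa(\scc(T,C),\scc(T,\so_C))$ and the perfectoid product) and that $Y_2$ being quasi-compact and quasi-separated forces any map from the limit to factor through a finite stage and to be unique once it does; this is the one place where a genuine (though standard) argument about affinoid/perfectoid spaces is needed rather than pure condensed formalism.
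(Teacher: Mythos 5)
Your step (2) is where the argument breaks, and it breaks irreparably: the claim that $\Hom_S(Y_1\times T,Y_2)=\colim_i\Hom_S(Y_1\times T_i,Y_2)$ for $T=\lim_iT_i$ requires the target $Y_2$ to be (topologically) of finite type/finite presentation over $S$, not merely qcqs, and an affinoid perfectoid essentially never is. Concretely, take $S=Y_1=\Spa(C,\so_C)$, $T=\Z_p$, and $Y_2=\wh{\Z}_p$, the affinoid perfectoid attached to the profinite set $\Z_p$ (which, as recalled in Example \ref{ex1}, represents the constant sheaf $\underline{\Z}_p$). Then $\Hom_S(Y_1\times T,Y_2)=\scc(\Z_p,\Z_p)$, whereas $\colim_i\Hom_S(Y_1\times T_i,Y_2)=\colim_i\scc(T_i,\Z_p)$ consists of the locally constant maps $\Z_p\to\Z_p$; the identity map is continuous but not locally constant, so the colimit formula fails. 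Worse, if it did hold, your steps (3)--(4) would show that $\uHom_S(Y_1,Y_2)$ equals the \emph{discrete} condensed set $\underline{D}$, $D=\Hom_S(Y_1,Y_2)$ (since $\colim_i\scc(T_i,D)=\scc(T,D)$ for discrete $D$), which is false here: $\uHom_S(Y_1,Y_2)\simeq\underline{\Z}_p$ is prodiscrete but not discrete. There is also a terminological slip in (4): prodiscrete means a \emph{cofiltered limit} of discrete condensed sets, so even formally, exhibiting a filtered colimit of discrete sets would not give prodiscreteness --- indeed a filtered colimit of discrete condensed sets is again discrete, which is exactly why your conclusion is too strong.

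The pro-structure does not come from the finite quotients of $T$ but from the coordinate rings. The paper's proof first uses your (correct) computation in the $T$-variable only to identify $\uHom_S(Y_1,Y_2)$ with the condensed algebra-Hom $\underline{\Hom}^{\rm Alg}_{\underline{R}^+_S}(\underline{R}^+_2,\underline{R}^+_1)$, via $\Hom_S(Y_1\times T,Y_2)\simeq\Hom^{\rm Alg}_{R^+_S}(R^+_2,\scc(T,R^+_1))\simeq\scc(T,\Hom^{\rm Alg}_{R^+_S}(R^+_2,R^+_1))$. It then exhibits this as a closed condensed subset of $\uHom_{\underline{\so}_C}(\underline{R}^+_2,\underline{R}^+_1)\simeq\lim_n\lim_i\uHom_{\underline{\so}_{C,n}}(\underline{R}^+_{2,n,i},\underline{R}^+_{1,n})$, a cofiltered limit of discrete condensed sets, by writing $R^+_2/p^n$ as a filtered colimit of finitely generated $\so_{C,n}$-submodules and using that closed condensed subsets of prodiscrete sets are prodiscrete. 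Any repair of your argument has to produce the cofiltered limit from such data (reduction mod $p^n$ and finitely generated pieces of $R^+_2$), not from the presentation $T=\lim_iT_i$.
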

 \begin{proof}
 Set $R^+_1:= R^+_{Y_1}, R^+_2:= R^+_{Y_2}, R^+_S:= R^+_{S}$. These are prodiscrete sets. 
We claim that there is an isomorphism of condensed sets
 $$
  \uHom_S(Y_1,Y_2)\simeq   \underline{\Hom}_{\underline{R}^+_S}^{\rm Alg}(\underline{R}^+_2, \underline{R}^+_1),
 $$
 where the second $\Hom$ is taken in $\underline{R}^+_S$-algebras. 
 To see this we start with showing that  there is an isomorphism of condensed sets
 $$
   \uHom_S(Y_1,Y_2)\simeq  \underline{\Hom_{R^+_S}^{\rm Alg}(R^+_2, R^+_1)},
 $$
  where $ \Hom_{R^+_S}^{\rm Alg}(R^+_2, R^+_1)$ is equipped with the compact-open topology. For that, we compute:
 \begin{align*}
 \Hom_S(Y_1\times T,Y_2) & =\Hom_{R^+_S}^{\rm Alg}(R^+_2, R^+_1\wh{\otimes}\scc(T,\so_C))\simeq \Hom_{R^+_S}^{\rm Alg}(R^+_2, \scc(T,R^+_1))\\
  & \simeq \scc(T, \Hom_{R^+_S}^{\rm Alg}(R^+_2, R^+_1)),
 \end{align*}
  where  the last  isomorphism follows from the exponential law since $T, R^+_1, R^+_2$ are compactly generated  (since they are colimits of profinite sets) 
 and Hausdorff. 
By \cite[Prop. 4.2]{Sch19},  we have  the natural isomorphism of condensed sets $$
 \underline{\Hom}_{\underline{R}^+_S}^{\rm Alg}(\underline{R}^+_2, \underline{R}^+_1)\stackrel{\sim}{\to} 
 \underline{\Hom_{R^+_S}^{\rm Alg}(R^+_2, R^+_1)}.
 $$
 
  It suffices now to show that $\underline{\Hom}_{\underline{R}^+_S}^{\rm Alg}(\underline{R}^+_2, \underline{R}^+_1)$ is prodiscrete.   But this  is a closed condensed subset of 
  $ \uHom_{\underline{\so}_C}(\underline{R}^+_2,\underline{R}^+_1)$  hence, by \cite{MO},  it suffices to show that  $ \uHom_{\underline{\so}_C}(\underline{R}^+_2,\underline{R}^+_1)$ is prodiscrete. For that, we compute
 \begin{align*}
 \uHom_{\underline{\so}_C}(\underline{R}^+_2,\underline{R}^+_1) & \simeq \lim_n \uHom_{\underline{\so}_{C,n}}(\underline{R}^+_{2,n},\underline{R}^+_{1,n}) \simeq \lim_n \uHom_{\underline{\so}_{C,n}}(\colim_i\underline{R}^+_{2,n,i},\underline{R}^+_{1,n})\\
  & \simeq \lim_n \lim_i\uHom_{\underline{\so}_{C,n}}(\underline{R}^+_{2,n,i},\underline{R}^+_{1,n}),
 \end{align*}
 where $\underline{R}^+_{2,n,i}\subset \underline{R}^+_{2,n}$ are finitely generated  $\underline{\so}_{C,n}$-modules. 
 Getting what we wanted. 
 \end{proof}
 \subsubsection{Topologically enriched  presheaves} \label{warsz1} 
Let $\scc$ be a bicomplete, locally finitely presentable, closed symmetric monoidal  category (for example, 
${\rm Cond}$, ${\rm CondAb}$, ${\rm Solid}$). 
Such a category $\scc$ is canonically $\scc$-enriched ($\scc$-category, for short) and we assume that it is a ${\rm Cond}$-subcategory of ${\rm Cond}$. 
  We will denote by
  $\Hom_{\scc}(-,-)$ and $\uHom_{\scc}(-,-)$ its $\Hom$-set and $\scc$-$\Hom$, respectively. 
  We have $\uHom_{\scc}(-,-)(*)\simeq \Hom_{\scc}(-,-)$.   If $\scc={\rm Cond}$ we will skip the subscripts. 
 
   Let $S\in {\rm Perf}_C$.  A $\scc$-{\em enriched presheaf} (or $\scc$-presheaf for short)
 $\sff: {\rm Perf}^{\rm op}_S \to \scc$ consists of a function
 $$\sff: {\rm Perf}_S \mapsto {\rm Ob} \scc$$
 together with,  for $Y_1, Y_2\in{\rm Perf}_S$,
 a map of condensed sets 
\begin{equation}\label{str2}
\underline{\sff}_{Y_1, Y_2}:\quad\uHom_S(Y_1,Y_2)\to \uHom_{\scc}(\sff(Y_2),\sff(Y_1))
\end{equation}
in a   manner  compatible with  compositions and  identities (see \cite[(1.5), (1.6)]{Kelly}).  For $\scc$-presheaves $\sff, \sg$, a $\scc$-natural transformation $f: \sff\to \sg$
is a ${\rm Perf}_S$-indexed family of maps in $\scc$:
$$
f_Y: \sff(Y)\to \sg(Y)
$$
satisfying a $\scc$-naturality condition (see \cite[(1.7)]{Kelly}, \cite[(4.30)]{GM}): the following diagram commutes in ${\rm Cond}$
$$
\xymatrix{
\uHom_S(Y,Y_1)\ar[r]^-{\underline{\sff}_{Y,Y_1}} \ar[d]^-{\underline{\sg}_{Y,Y_1}} & \uHom_{\scc}(\sff(Y_1),\sff(Y))\ar[d]^{f_{*,Y}} \\
 \uHom_{\scc}(\sg(Y_1),\sg(Y)) \ar[r]^{f^*_{Y_1}} &  \uHom_{\scc}(\sff(Y_1),\sg(Y)).
}
$$
 We will write $\Hom_{\scc}(\sff,\sg)$ for the set of $\scc$-natural transformations. 
 
 We will denote the category of such presheaves by $\underline{\rm PSh}(S,\scc)_0$ and, in the case $\scc={\rm Cond}$, we will simply write $\underline{\rm PSh}(S)_0$. 
 \begin{example}

(1)  The topological sheaves that come from $S_{\proeet}$ are canonically  enriched:
\begin{lemma}\label{enriched1}Let $\sff\in {\rm Sh}(S_{\proeet})$. The sheaf $\pi_*\sff\in {\rm Sh}(S_{\proeet},{\rm Cond})$ is canonically enriched. 
\end{lemma}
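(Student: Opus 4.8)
The plan is to unwind both sides of the enrichment structure map \eqref{str2} for the presheaf $\pi_*\sff$ and check that the requisite map of condensed sets exists functorially. Recall that $(\pi_*\sff)(Y) = \sff(Y\times{*}) = \sff(Y)$ as objects of ${\rm CondAb}$, but the point is that $\pi_*\sff$, viewed as a presheaf on ${\rm Perf}_S$, remembers more: its value on $(Y,T)$ in $S^{\rm top}_{\proeet}$ is $\sff(Y\times T)$, and this is exactly what will supply the condensed structure on the relevant mapping objects. First I would spell out what must be produced: for $Y_1,Y_2\in{\rm Perf}_S$ a map of condensed sets
$$
\uHom_S(Y_1,Y_2)\to \uHom\bigl((\pi_*\sff)(Y_2),(\pi_*\sff)(Y_1)\bigr),
$$
compatible with composition and identities. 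By the definition of the internal $\uHom$ of condensed sets (and using that, by Lemma \ref{prodiscrete}, the source is prodiscrete, hence it suffices to evaluate on profinite $T$), producing such a map amounts to giving, for every profinite set $T$, a compatible family of maps
$$
\Hom_S(Y_1\times T, Y_2)\longrightarrow \Hom_{{\rm CondAb}}\bigl(\sff(Y_2),\sff(Y_1\times T)\bigr),
$$
i.e. a family of homomorphisms of condensed abelian groups indexed by the set $\Hom_S(Y_1\times T,Y_2)$.

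The construction itself is then immediate from functoriality of the sheaf $\sff$ on $S_{\proeet}$: a morphism $g\colon Y_1\times T\to Y_2$ in ${\rm Perf}_S$ induces, by pullback, a morphism $\sff(g)\colon \sff(Y_2)\to \sff(Y_1\times T)$ in ${\rm CondAb}$, and this assignment $g\mapsto\sff(g)$ is what defines the desired map on the $T$-level. Varying $T$ over profinite sets, these maps are compatible with the transition maps $T'\to T$ (again by functoriality of $\sff$, applied to $Y_1\times T'\to Y_1\times T$), so they assemble into a morphism of condensed sets as required — here one uses that $\uHom\bigl(\sff(Y_2),\sff(-)\bigr)$ is a condensed set via $T\mapsto \uHom(\sff(Y_2),\sff(Y_1\times T))(*)$, together with the fact that a morphism of prodiscrete condensed sets is determined by its values on profinite sets.

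Next I would check the enriched-functoriality axioms \cite[(1.5),(1.6)]{Kelly}: unitality is the statement that the identity $Y\times T\xrightarrow{{\rm id}}Y\times T$ (corresponding to ${\rm id}_Y$ under $i_Y$) is sent to ${\rm id}_{\sff(Y\times T)}$, which holds since $\sff$ preserves identities; compatibility with composition unwinds, via the explicit formula for the composition map in ${\rm Perf}_S$-enrichment given before the statement (the one using $Y_1\times T\xrightarrow{{\rm id}\times\Delta}Y_1\times T\times T\to Y_2\times T\to Y_3$), to the contravariant functoriality of $\sff$ applied to that composite — a diagram chase with no surprises, the only care being to match the diagonal $\Delta\colon T\to T\times T$ with the corresponding structure on the condensed mapping objects. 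I do not expect any genuine obstacle here: the one point requiring a small argument is the passage from "defined on profinite $T$, compatibly" to "a morphism of condensed sets", which is handled by Lemma \ref{prodiscrete} (prodiscreteness of the source) plus the fact that the target is also prodiscrete in the $T$-variable; everything else is formal functoriality of $\sff$. One should also note the enrichment is manifestly natural in $\sff$, so that $\pi_*$ upgrades to a functor ${\rm Sh}(S_{\proeet})\to\underline{\rm PSh}(S)_0$ (indeed landing in enriched sheaves), which is presumably the use to which the lemma will be put.
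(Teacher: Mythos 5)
Your construction is correct and is essentially the paper's proof: you produce, for each $f\in\Hom_S(Y_1\times T,Y_2)$, the pullback map of condensed objects $(\pi_*\sff)(Y_2)\to(\pi_*\sff)(Y_1\times T)\cong[T,(\pi_*\sff)(Y_1)]$, which is exactly the adjoint form of the paper's formula $(f,g)\mapsto (fg,{\rm Id})^*$ defining \eqref{maps1}, with the Kelly axioms likewise left as a routine check. (The appeal to Lemma \ref{prodiscrete} is unnecessary, since a map of condensed sets is in any case determined by compatible maps on profinite $T$-points, and ``${\rm CondAb}$'' should read ``${\rm Cond}$'' here, but these are cosmetic.)
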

\begin{proof}
We need to define,  compatible in $T$, maps
\begin{equation}\label{maps1}\Hom_S(Y_1\times T,Y_2)\to \Hom(\pi_*\sff(Y_2)\times T,\pi_*\sff(Y_1))=\{T_1: \sff(Y_2\times T_1)\times \underline{T}(T_1)\to \sff(Y_1\times T_1)\}.
\end{equation}
We induce them by the following transformation of maps over $S$
$$
(Y_1\times T\stackrel{f}{\to} Y_2)\times (T_1\stackrel{g}{\to} T)\mapsto (Y_1\times T_1\stackrel{fg}{\longrightarrow} Y_2)\mapsto (Y_1\times T_1\stackrel{(fg,{\rm Id})}{\longrightarrow} Y_2\times T_1).
$$
Clearly, evaluating \eqref{maps1} on $*$ yields the structure map \eqref{str1}. Commutativity of the composition and the identities diagrams \cite[(1.5), (1.6)]{Kelly} is easy (if tedious) to check. 
\end{proof}

(2) Lemma \ref{enriched1} combined with Example \ref{ex1} yield  that the sheaves $\Q_p, {\mathbb G}_a\in {\rm Sh}(S_{\proeet},{\rm Cond})$ are canonically enriched and so are the period sheaves ${\mathbb B}^+_{\dr}/t^i$ and ${\mathbb B}_{\crr}^{+,\phi=p^i}$, $i\geq 0$. 
\end{example}
\subsubsection{The ${\rm Cond}$-category of  topologically enriched  presheaves}\label{bicomplete} Assume moreover that  $\scc$ is a bicomplete  $\scc$-category:  $\scc$ is bicomplete in the usual set-based sense and the tensors and cotensors:
$$
c\otimes W,\quad [W,c], \quad c, W\in\scc,
$$
 satisfy the $\scc$-adjunctions
$$
\uHom_{\scc}(c_1\otimes W,c_2)\simeq [W,\uHom_{\scc}(c_1,c_2)]\simeq \uHom_{\scc}(c_1,[W,c_2]).
$$
(Here $[-,-]=\uHom_{\scc}(-,-)$ but below we will need to separate these two operations.)  
{\it This will be our standard assumption on $\scc$ from now on}.

   The category  $\underline{{\rm PSh}}(S,\scc)_0$ is the underlying category of the $\scc$-category $\underline{{\rm PSh}}(S,\scc)$, where  the $\Hom$-object in $\underline{{\rm PSh}}(S,\scc)$  is defined as the enriched end:
\begin{equation}\label{kol12}
\uHom_{\scc}(\sff,\sg)=\int_{Y\in {\rm Perf}_S}\uHom_{\scc}( \sff(Y),\sg(Y))
\end{equation}
computed  as an ordinary limit in $\scc$. More precisely, it is the equalizer in $\scc$ given by the diagram
$$
\xymatrix{
\prod_{Y\in {\rm Perf}_S}\uHom_{\scc}(\sff(Y),\sg(Y))\ar@<-1mm>[r] \ar@<1mm>[r]  & \prod_{Y,Y_1\in {\rm Perf}_S}\uHom_{\scc}(\uHom_S(Y_1,Y)\otimes\sff(Y),\sg(Y_1)),
}
$$
where the parallel arrows are defined via the evaluation maps
$$
\uHom_S(Y_1,Y)\otimes\sff(Y)  \to \sff(Y_1),\quad 
 \uHom_S(Y_1,Y)\otimes\sg(Y)\to \sg(Y_1)
 $$
of the $\scc$-presheaves $ \sff, \sg$. We have $\uHom_{\scc}(\sff,\sg)(*)=\Hom_{\scc}(\sff,\sg)$. 

   The  category $\underline{{\rm PSh}}(S,\scc)$ is a  bicomplete  $\scc$-category \cite[Sec. 3.3]{Kelly} with limits and colimits computed objectwise. Hence it is  tensored and cotensored over $\scc$:   we have the tensor functor
 \begin{align*}
  (-)\otimes (-):\quad  & \scc\times \underline{\rm PSh}(S,\scc)\to \underline{\rm PSh}(S,\scc),\\
   & W\otimes\sff=\{Y\mapsto W\otimes\sff(Y)\},
 \end{align*}
and  the cotensor functor
 \begin{align*}
 [-,-]:\quad  & \scc^{\rm op}\times \underline{\rm PSh}(S,\scc)\to \underline{\rm PSh}(S,\scc),\\
  & [W,\sff]=\{Y\mapsto \uHom_{\scc}(W,\sff(Y))\},
 \end{align*}
such that, for each $W\in \scc$, there is a natural isomorphim
 \begin{equation}\label{mor10}
 \uHom_{\scc}(W,\uHom_{\scc}(\sff,\sg))\simeq \uHom_{\scc}(\sff,[W,\sg]).
 \end{equation}
 Moreover,  for each $W\in \scc$, there is a natural isomorphim
 $$
 \uHom_{\scc}(W\otimes\sff,\sg)\simeq\uHom_{\scc}(W,\uHom_{\scc}(\sff,\sg)). 
 $$
  \begin{remark}{\rm (Topological presheaves)} \label{koczkodan1} The presheaves studied in Section \ref{naive1},  which we called "{topological presheaves}", also form a $\scc$-category. We will call  such $\scc$-categories $ {\rm PSh}(S,\scc)$. Their underlying categories\footnote{For an enriched category $\sa$, we denote by $\sa_0$ the underlying category.} ${\rm PSh}(S,\scc)_0$ are   defined as in Section \ref{warsz1}   but with the trivial enrichment on the mapping spaces, i.e., in the structure maps \eqref{str2}, instead of
$\uHom_S(Y_1,Y_2)$ we take $\underline{\Hom_S(Y_1,Y_2)}$, where $\Hom_S(Y_1,Y_2)$ is given the discrete topology. Hence they are canonically  induced by structure maps of the form
\begin{equation}\label{str1}
\sff_{Y_1, Y_2}:\quad \Hom_S(Y_1,Y_2)\to \Hom_{\scc}(\sff(Y_2),\sff(Y_1)).
\end{equation}
The $\Hom$-object is defined as in \eqref{kol12}. 
 
 We have the forgetful functor
$$
(-)^{\rm cl}: \quad  \underline{\rm PSh}(S,\scc)\to {\rm PSh}(S,\scc)
 $$
 given by evaluating the structure map \eqref{str2} on $*$ to yield the structure map \eqref{str1}.
\end{remark}
  \subsubsection{Alternative description of topologically enriched presheaves} \label{ias2}
   We keep assuming that the ${\rm Cond}$-category $\scc$ is  bicomplete. In that case the category $\underline{{\rm PSh}}(S,\scc)_0$ has an alternative description, which will allow us to understand it better.  We want to  think of a ${\rm Cond}$-presheaf $\sff: {\rm Perf}^{\rm op}_S\to \scc$ as a presheaf
  $$
  \sff:{\rm Perf}_S^{\rm op}\mapsto {\rm Ob}\scc
  $$
  plus a compatible collection of (structure) maps in $\scc$
  \begin{equation}\label{alt1}
 \underline{\sff}_{Y,T}:\quad  \sff(Y\times T)\otimes {T}\to \sff(Y),\quad Y\in{\rm Perf}_S,\ T\in {\rm ProFin}. 
  \end{equation}
Equivalently, we    can use  the adjoints of the structure maps \eqref{alt1} 
$$
\underline{\sff}^{\prime}_{Y,T}:\quad  \sff(Y\times T)\to  [T, \sff(Y)]
$$
and the compatibility condition corresponds to  the maps
$$
\underline{\sff}^{\prime}_{Y,*}:\quad  \sff(Y)\to  [*, \sff(Y)]\simeq \sff(Y)
$$
being the identities, the maps $\underline{\sff}^{\prime}_{Y,T}$ being functorial in $Y$ and $T$, i.e., the following
 diagrams
\begin{equation}\label{alt1b}
\xymatrix{
  \sff(Y\times T)\ar[r]^{\underline{\sff}^{\prime}_{Y,T}} \ar[d]^{(f\times g)^*}& [T, \sff(Y)]\ar[d]^{(g^*,f^*)} \\
  \sff(Y_1\times T_1)\ar[r]^{\underline{\sff}^{\prime}_{Y,T} }&   [T_1, \sff(Y_1)],
}
\end{equation}
for all maps $f: Y_1\to Y, g: T_1\to T$, being commutative, together with a composition rule. 

  A ${\rm Cond}$-natural transformation $f:\sff\to\sg$ between two such presheaves    $\sff$, $\sg$ is given by  functorial maps in $\scc$
  $$
  f_Y: \sff(Y)\to \sg(Y),\quad Y\in {\rm Perf}_S,
  $$
  which are  compatible with the structure maps \eqref{alt1}. We will write $\Hom_{\scc}(\sff,\sg)_0$ for the set of ${\rm Cond}$-natural transformations. 
  
  We will call the category of such data $\underline{{\rm PSh}}^{\prime}(S,\scc)_0$.
  We obtain the ${\rm Cond}$-category $\underline{{\rm PSh}}^{\prime}(S,\scc)$ (whose underlying category is $\underline{{\rm PSh}}^{\prime}(S,\scc)_0$) by enriching in ${\rm Cond}$ the $\Hom$-sets $\Hom_{\scc}(\sff,\sg)_0$:
  $$\Hom_{\scc}(\sff, \sg):=\{T\mapsto \Hom_{\scc}(\sff\otimes {T},\sg)_0\},\quad \sff, \sg\in \underline{{\rm PSh}}^{\prime}(S,\scc)_0.
 $$
 (We note that $\sff\otimes {T}\in \underline{{\rm PSh}}^{\prime}(S,\scc)_0$).  We clearly have $\Hom_{\scc}(\sff,\sg)(*)=\Hom_{\scc}(\sff,\sg)_0$.

  \begin{lemma}\label{goraco1}
The ${\rm Cond}$-categories $\underline{{\rm PSh}}(S,\scc)$ and $\underline{{\rm PSh}}^{\prime}(S,\scc)$  are equivalent. 
\end{lemma}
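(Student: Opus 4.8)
The plan is to construct mutually inverse ${\rm Cond}$-functors between $\underline{{\rm PSh}}(S,\scc)$ and $\underline{{\rm PSh}}^{\prime}(S,\scc)$ and then check that they are compatible with the enrichments. First I would describe the comparison on objects. Given a $\scc$-presheaf $\sff\colon{\rm Perf}_S^{\rm op}\to\scc$ in the sense of \eqref{str2}, for every $Y_1,Y_2\in{\rm Perf}_S$ the structure map $\underline{\sff}_{Y_1,Y_2}\colon\uHom_S(Y_1,Y_2)\to\uHom_\scc(\sff(Y_2),\sff(Y_1))$ is a map of condensed sets. I would evaluate it on the sub-condensed-set $\underline{\Hom}_S(Y_1\times T,Y_2)$ obtained by pulling back along projection, and use that $\uHom_S(Y_1,Y_2)(T)=\Hom_S(Y_1\times T,Y_2)$ by definition, together with the tensor–cotensor adjunction of $\scc$, to produce the maps $\sff(Y\times T)\otimes T\to\sff(Y)$ of \eqref{alt1}; concretely one takes $Y_1=Y$, $Y_2=Y\times T'$ and applies $\underline{\sff}$ to the canonical element of $\uHom_S(Y\times T,Y\times T)(T)$, or equivalently contemplates the universal (identity) point of $\uHom_S(Y\times T,Y\times T)$. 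Conversely, given the data \eqref{alt1} with the coherence diagrams \eqref{alt1b}, I would reconstruct \eqref{str2} by the adjunction $\uHom_\scc(W\otimes\sff(Y_2),\sff(Y_1))\simeq\uHom_\scc(W,\uHom_\scc(\sff(Y_2),\sff(Y_1)))$ recorded just before the Lemma. The key input is simply that $\uHom_S(Y_1,Y_2)$ is exactly the condensed set $T\mapsto\Hom_S(Y_1\times T,Y_2)$, so the two packagings of the structure maps are literally related by the condensed-set/$\scc$-valued adjunction, nothing deeper.

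Next I would verify that the composition and identity axioms on the two sides correspond. The composition axiom for $\underline{{\rm PSh}}(S,\scc)$ is a commuting square built from the composition map $\uHom_S(Y_2,Y_3)\otimes\uHom_S(Y_1,Y_2)\to\uHom_S(Y_1,Y_3)$ of the enriched category ${\rm Perf}_S$, which was spelled out via the diagonal $Y_1\times T\xrightarrow{{\rm Id}\times\Delta}Y_1\times T\times T$. On the $\underline{{\rm PSh}}'$ side the corresponding coherence is the unlabeled "composition rule" accompanying \eqref{alt1b}, together with functoriality in $Y$ and $T$ separately (which is diagram \eqref{alt1b}). Tracing the diagonal through, the composite $\sff((Y\times T)\times T)\otimes T\otimes T\to\sff(Y)$ coming from iterating \eqref{alt1} must agree with the single structure map for $Y\times(T\times T)$ precomposed with $\Delta$; this is a routine diagram chase using the $\scc$-naturality squares and I would only indicate it. The identity axiom matches the condition that $\underline{\sff}_{Y,*}$ (equivalently the $T=*$ case of \eqref{alt1}) is the identity of $\sff(Y)$.

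Then I would check that the correspondence is functorial and bijective on morphisms, compatibly with enrichment. A $\scc$-natural transformation in the sense of \eqref{str2}, i.e.\ a family $f_Y$ making the naturality square commute, is the same, after applying the adjunction, as a family $f_Y$ commuting with the maps \eqref{alt1}; that is exactly the definition of a morphism in $\underline{{\rm PSh}}'(S,\scc)_0$. So the underlying categories are equivalent. For the enrichment, $\underline{{\rm PSh}}(S,\scc)$ uses the end \eqref{kol12}, while $\underline{{\rm PSh}}'(S,\scc)$ sets $\Hom_\scc(\sff,\sg)(T)=\Hom_\scc(\sff\otimes T,\sg)_0$; I would observe that $\Hom_\scc(\sff\otimes T,\sg)_0=\Hom_\scc(T,\uHom_\scc(\sff,\sg))(*)$ using the tensor–cotensor adjunction \eqref{mor10} for presheaves and the identity $\uHom_\scc(T,-)(*)=(-)(T)$ for condensed objects, which exhibits the two condensed $\Hom$-objects as canonically isomorphic and the equivalence as ${\rm Cond}$-enriched. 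Finally I would note that under the correspondence the tensor $W\otimes\sff$ and cotensor $[W,\sff]$ are carried to the objectwise ones on the $\underline{{\rm PSh}}'$ side, so the equivalence respects all the structure we will use later.

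The main obstacle is not any single hard step but the bookkeeping: one has to make sure that the passage $\uHom_S(Y_1,Y_2)\leftrightarrow\{T\mapsto\Hom_S(Y_1\times T,Y_2)\}$ is used consistently in the right variable, and that the composition rule in $\underline{{\rm PSh}}'(S,\scc)$ — which is only alluded to in the text — is precisely the one forced by transporting the enriched composition through the adjunction, in particular that the diagonal $\Delta$ appearing in the definition of composition in ${\rm Perf}_S$ matches the iteration of \eqref{alt1}. Once that dictionary is fixed, the rest is a sequence of adjunction manipulations and diagram chases with no real content.
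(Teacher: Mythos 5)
Your proposal follows essentially the same route as the paper: construct the comparison functor by evaluating the enriched structure maps at the identity point of $\uHom_S(Y,Y\times T)(T)$ to obtain the maps \eqref{alt1}, construct a quasi-inverse, match morphisms, and identify the condensed $\Hom$-objects via the tensor--cotensor adjunction. The one place the paper is more explicit is the quasi-inverse, which is not produced by adjunction alone: it sends $f\colon Y_1\times T\to Y_2$ to the composite $\sff(Y_2)\otimes T\xrightarrow{f^*\otimes{\rm Id}}\sff(Y_1\times T)\otimes T\to\sff(Y_1)$, using the ordinary functoriality encoded in \eqref{alt1b} together with the structure map $\underline{\sff}_{Y_1,T}$.
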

\begin{proof}  We start with defining  a ${\rm Cond}$-functor $$F: \underline{{\rm PSh}}(S,\scc)\to \underline{{\rm PSh}}^{\prime}(S,\scc),$$ which we will show later to be an equivalence.  Let $\sff\in  \underline{{\rm PSh}}(S,\scc)$. We define $F(\sff)$ as being given by the same presheaf on ${\rm Perf}^{\rm op}_S$ as $\sff$.  To define  the structure maps $\underline{\sff}_{Y, T}$ for $F(\sff)$, we start with the structure maps for $\sff$
$$
\underline{\sff}_{Y, Y_1}: \uHom_S(Y,Y_1)\to \Hom_{\scc}(\sff(Y_1),\sff(Y)).
$$
Evaluated at $T\in {\rm ProFin}$, they yield a compatible collection (indexed by $T$) of maps of sets
$$
\underline{\sff}_{Y,Y_1,T}: \Hom_S(Y\times T, Y_1)\to \Hom_{\scc}(\sff(Y_1)\otimes {T},\sff(Y)).
$$
Setting $Y_1=Y\times T$ in the above and taking the image of the identity by the map $\underline{\sff}_{Y,Y_1,T}$,  we get a compatible, in $Y$ and $T$, family of maps in $\scc$
$$
\sff(Y\times T)\otimes {T}\to \sff(Y),
$$
as wanted. This defines a function
$$
F: {\rm Ob}\, \underline{{\rm PSh}}(S,\scc)\to {\rm Ob}\, \underline{{\rm PSh}}^{\prime}(S,\scc).  
$$

  It remains to define, for each pair $\sff,\sg\in \underline{{\rm PSh}}(S,\scc)$, a map 
  $$
  F_{\sff,\sg}: \Hom_{\scc}(\sff,\sg)\to \Hom_{\scc}(F(\sff),F(\sg))
  $$
compatible with  the composition and the identity (see \cite[(1.5), (1.6)]{Kelly}). This amounts to defining, indexed by $T\in {\rm ProFin}$, a compatible family of maps
$$
  F_{\sff,\sg}(T): \Hom_{\scc}(\sff\otimes{T},\sg)_0\to \Hom_{\scc}(F(\sff)\otimes{T},F(\sg))_0.
$$
But, by the definition of $F(\sff), F(\sg)$, these two sets can be canonically identified. 

 The above proves that the ${\rm Cond}$-functor $F$  is in fact fully faithful. It remains thus to show that it is essentially surjective on objects. 
 For that we will  define the quasi-inverse $G$ to $F$ on objects:    $G$ is  given by the same function on ${\rm Perf}^{\rm op}_S$ as $F$ and  to define  the structure maps $\underline{\sff}_{Y_1, Y_2}$ for $G(\sff)$:
$$
\underline{\sff}_{Y_1, Y_2}: \uHom_S(Y_1,Y_2)\to \Hom_{\scc}(G(\sff)(Y_2),G(\sff)(Y_1))
$$
we need to define, a compatible  in $T$, family of maps of sets
$$
\underline{\sff}_{Y_1, Y_2,T}: \Hom_S(Y_1\times T,Y_2)\to \Hom_{\scc}(\sff(Y_2)\otimes {T},\sff(Y_1)).
$$
To do that, we map a map  $f:Y_1\times T\to Y_2$ to the composition 
$$
\sff(Y_2)\otimes{T}\lomapr{f\otimes{\rm Id}}\sff(Y_1\times T)\otimes{T}\lomapr{\underline{\sff}_{Y_1,T}}\sff(Y_1).
$$
It is easy to check that $FG={\rm Id}$ and $GF={\rm Id}$, hence $F$ is essentially surjective, as wanted.
\end{proof}

\begin{example}{\rm (Rigid topologically enriched presheaves)} We will distinguish a subcategory of topologically enriched presheaves. 
\begin{definition}
A topologically enriched presheaf $\sff\in  \underline{{\rm PSh}}(S,\scc)$ is called {\em rigid} if  the structure maps from \eqref{alt1b} 
$$
\underline{\sff}^{\prime}_{Y,T}:\quad  \sff(Y\times T)\to  [T, \sff(Y)]
$$
are isomorphisms in $\scc$, for all $Y\in {\rm Perf}_S$, $ T\in {\rm ProFin}$.  Equivalently, if the maps
$$
\underline{\sff}^{\prime}_{Y,T}(*):\quad \sff(Y\times T, *)\to  \sff(Y,T)
$$
are isomorphisms of sets, for all $Y\in  {\rm Perf}_S$ and $T\in {\rm ProFin}$. The full subcategory of rigid presheaves in $ \underline{{\rm PSh}}(S,\scc)$ 
will be denoted by $ \underline{{\rm PSh}}^{\rm rig}(S,\scc)$.  We will denote by $\underline{{\rm Sh}}^{\rm rig}(S,\scc)$ its full subcategory of {\em rigid sheaves}: a rigid presheaf $\sff$ is a sheaf if the pro-\'etale presheaf  $\eta_*\sff$ is a sheaf. 
\end{definition}
We note that the equivalence in the above definition follows from the commutative diagram
$$
\xymatrix{
\sff(Y\times T\times T_1,*)\ar[rrd]^{\underline{\sff}^{\prime}_{Y ,T\times T_1}(*)} \ar[d]_{\underline{\sff}^{\prime}_{Y\times T,T_1}(*)}  \\
\sff(Y\times T,T_1)\ar@{-->}[rr]^-{\underline{\sff}^{\prime}_{Y,T}(T_1)}   & & \sff(Y,T\times T_1),
}
$$
which is just a rewriting of the composition commutative diagram
$$
\xymatrix{
\sff(Y\times T\times T_1)(*)\ar[rrd]^{\underline{\sff}^{\prime}_{Y ,T\times T_1}(*)} \ar[d]_{\underline{\sff}^{\prime}_{Y\times T,T_1}(*)}  \\
[T_1,\sff(Y\times T)](*)\ar@{-->}[rr]^-{[T_1,\underline{\sff}^{\prime}_{Y,T}]}   & & [T_1,[T,\sff(Y)]](*)=[T_1\times T,\sff(Y)](*).
}
$$

\begin{lemma}\label{enriched1a}
Let $\sff\in {\rm PSh}(S_{\proeet})$. The topologically enriched presheaf $\pi_*\sff\in \underline{{\rm PSh}}(S)$ is a rigid sheaf. Moreover, the functor
$$
\pi^{\rm rig}_*:\quad {\rm Sh}(S_{\proeet})\to \underline{{\rm Sh}}^{\rm rig}(S_{\proeet})_0
$$
is an equivalence of categories. 
\end{lemma}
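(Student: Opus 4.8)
The plan is to produce the rigid enriched sheaf $\pi_*^{\rm rig}\sff$ from $\sff\in {\rm Sh}(S_{\proeet})$, check rigidity, and then exhibit an explicit inverse. First I would record the structure maps. Recall from Example \ref{ex1}/Lemma \ref{enriched1} that $\pi_*\sff$ is the ${\rm Cond}$-presheaf $Y\mapsto \{T\mapsto \sff(Y\times T)\}$, and that the enrichment is the one from Lemma \ref{enriched1}. In the alternative description of Section \ref{ias2} the structure map $\underline{(\pi_*\sff)}^{\prime}_{Y,T}\colon (\pi_*\sff)(Y\times T)\to [T,(\pi_*\sff)(Y)]$ is, after evaluating on a profinite set $T_1$, the map $\sff(Y\times T\times T_1)\to \sff(Y\times T_1\times T)={\rm Map}(T_1,(\pi_*\sff)(Y))(*)\text{-component}$ induced by the canonical identification $Y\times T\times T_1\simeq Y\times T_1\times T$; concretely it sends $\sff(Y\times T\times T_1)$ to $\sff(Y\times(T\times T_1))$ under $Y\times T\times T_1\cong Y\times(T\times T_1)$. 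This is manifestly an isomorphism of sets for every $T,T_1$, so by the criterion in the Definition preceding the Lemma (the ``Equivalently'' clause, which rewrites rigidity as $\underline{\sff}^{\prime}_{Y,T}(*)$ being bijective on $\sff(Y\times T,*)\to\sff(Y,T)$), $\pi_*\sff$ is rigid. It is a sheaf since $\eta_*\pi_*\sff=\sff$ is a sheaf by hypothesis, so $\pi_*^{\rm rig}\sff:=\pi_*\sff$ lands in $\underline{{\rm Sh}}^{\rm rig}(S_{\proeet})_0$. (To be careful: ${\rm Sh}(S_{\proeet})$ here means sheaves of sets, i.e. $\scc={\rm Cond}$, but the same verification works verbatim for ${\rm CondAb}$, $\underline{\Q}_p$-modules, etc., which is presumably how the Lemma will be used downstream.)

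Next I would construct the quasi-inverse. Given a rigid sheaf $\sg\in \underline{{\rm Sh}}^{\rm rig}(S_{\proeet})_0$, set $G(\sg):=\eta_*\sg$, the pro-\'etale sheaf $Y\mapsto \sg(Y,*)$; by definition of ``rigid sheaf'' this is a sheaf on $S_{\proeet}$. Because $\eta\pi={\rm Id}$ (Section \ref{naive-521}), we have $\eta_*\pi_*^{\rm rig}={\rm Id}$ on ${\rm Sh}(S_{\proeet})$ on the nose. For the other composite, the point is that rigidity pins down all of $\sg$ from its ``$*$-part'' $\eta_*\sg$: for any $Y$ and any profinite $T$, the structure map gives a natural bijection $\sg(Y\times T,*)\xrightarrow{\sim}\sg(Y,T)$, hence $(\pi_*^{\rm rig}\eta_*\sg)(Y)(T)=(\eta_*\sg)(Y\times T)=\sg(Y\times T,*)\xrightarrow{\sim}\sg(Y,T)=\sg(Y)(T)$. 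I would check that this bijection is natural in $T$ (diagram \eqref{alt1b} applied with $f=\mathrm{id}$) and in $Y$, and compatible with the composition rule, so that it assembles to an isomorphism of condensed sets $(\pi_*^{\rm rig}\eta_*\sg)(Y)\xrightarrow{\sim}\sg(Y)$ and then to an isomorphism of enriched sheaves $\pi_*^{\rm rig}\eta_*\sg\xrightarrow{\sim}\sg$. Finally, to upgrade ``bijective on objects/isomorphisms'' to ``equivalence of categories'' I would observe that $\pi_*^{\rm rig}$ is also fully faithful: a morphism $\sff_1\to\sff_2$ in ${\rm Sh}(S_{\proeet})$ is the same as a compatible family $\sff_1(Y)\to\sff_2(Y)$, and a morphism $\pi_*^{\rm rig}\sff_1\to\pi_*^{\rm rig}\sff_2$ in $\underline{{\rm Sh}}^{\rm rig}(S_{\proeet})_0$ is a compatible family $\sff_1(Y\times T)\to\sff_2(Y\times T)$ commuting with the structure maps; rigidity forces such a family to be determined by its $T=*$ values, and conversely any morphism of pro-\'etale sheaves extends uniquely by $\sff_i(Y\times T)\cong\sff_i(Y)(T)$. (Alternatively: $G=\eta_*$ is a functor, and the two natural isomorphisms $\eta_*\pi_*^{\rm rig}\simeq{\rm Id}$, $\pi_*^{\rm rig}\eta_*\simeq{\rm Id}$ just constructed already give the equivalence, so full faithfulness is automatic.)

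I expect the only genuinely delicate point to be the bookkeeping of the compatibility conditions — verifying that the canonical identifications $Y\times T\times T_1\cong Y\times(T\times T_1)$ interact correctly with the composition rule of Section \ref{ias2} (i.e. the diagram underlying the ``Equivalently'' clause in the Definition), so that the object-level bijections really are morphisms of enriched presheaves and the functors $\pi_*^{\rm rig}$ and $\eta_*$ are mutually inverse as stated. Everything else is formal. Since the analogous verifications have already been carried out in Lemma \ref{enriched1} and in the $F\dashv G$ argument of the previous lemma in Section \ref{ias2}, I would state these compatibilities as ``easy (if tedious) to check'', following the style of the surrounding text, rather than write them out in full.
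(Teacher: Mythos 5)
Your proposal matches the paper's own argument in all essentials: rigidity of $\pi_*\sff$ is seen by identifying the structure maps $\pi_*\sff(Y\times T,*)\to\pi_*\sff(Y,T)$ with (canonical) identities, sheafiness follows from $\eta_*\pi_*\sff\simeq\sff$, and the equivalence is obtained by using rigidity to show that $\eta_*$ (restriction to $T=*$) is a quasi-inverse — the paper phrases this as essential surjectivity via the natural map $\pi_*\eta_*\sff\to\sff$ built from the structure maps plus fullness by checking that an enriched morphism is determined by its $*$-values, which is exactly your bookkeeping. The only difference is organizational (quasi-inverse versus essential surjectivity and full faithfulness checked separately), so the proposal is correct and follows the same route.
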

\begin{remark}
The above lemma is also valid for the pair ${\rm Ab}, {\rm CondAb}$  (in place of the pair ${\rm Set}, {\rm Cond}$). With the same proof.
\end{remark}
\begin{proof}
By Lemma \ref{enriched1}, the presheaf $\pi_*\sff$ is topologically enriched. To see that it is rigid we need to show that 
the structure maps
\begin{equation}\label{down1}
(\underline{\pi_*\sff})^{\prime}_{Y,T}(*):\quad  \pi_*\sff(Y\times T,*)\to    \pi_*\sff(Y, T)
\end{equation}
are isomorphisms of sets. 
But this is clear since they are  isomorphic to  the identity maps
$$
{\rm Id}: \sff(Y\times T)\to   \sff(Y\times T).
$$
 Moreover, we have $\eta_*\pi_*\sff\simeq \sff$ hence $\pi_*\sff$ is a sheaf. 

   Concerning the second claim of the lemma, let us start with essential surjectivity of the functor $\pi_*$. Take a rigid sheaf $\sff$. We have the structure maps
\begin{equation}\label{down2}
\underline{\sff}^{\prime}_{Y,T}(*):\quad  \sff(Y\times T,*)\to   \sff(Y,T),
\end{equation}
of sets
satisfying certain compatibilities. In the language of Section \ref{naive1a}, they give a natural transformation of topologically enriched presheaves\footnote{The fact that this is  a natural transformation follows from the coherence of the structure maps \eqref{down2}.}
$$
\pi_*\eta_*\sff\to \sff. 
$$
Since these maps are isomorphisms by assumption, we get the essential surjectivity.

 It remains to show that the functor $\pi_*$ is fully faithful. Faithfulness is clear.  Let $\sff, \sg\in {\rm Sh}(S_{\proeet})$ and take a map of topologically enriched presheaves
 $f: \pi_*\sff\to \pi_*\sg$. Define a map $\tilde{f}: \sff\to \sg$ by setting $\tilde{f}_Y=f_Y(*)$. We need to show that that $f=\pi_*\tilde{f}$. 
 But this can be seen, after unwiding the definitions, from the following commutative diagram (for all $Y\in {\rm Perf}_S, T\in {\rm ProFin}$)
 $$
 \xymatrix{
 \pi_*\sff(Y,T)\ar[r]^{f_Y(T)} & \pi_*\sff(Y,T)\\
 \sff(Y\times T,*)\ar[r]^{f_{Y\times T}(*)} \ar[u]^{\underline{\sff}^{\prime}_{Y,T}(*)}& \sg(Y\times T,*)\ar[u]^{\underline{\sg}^{\prime}_{Y,T}(*)}
 }
 $$
 Here, the vertical maps, after identification of the source  and the target, are just identities.
 \end{proof}
 \begin{remark} 
 It follows from Lemma \ref{enriched1a} that a rigid presheaf $\sff$ is a sheaf if and only if the topological presheaf $\sff^{\rm cl}$ is a sheaf. 
 \end{remark}
\end{example}
\subsubsection{Enriched Yoneda Lemma}\label{enrichedyoneda}
Recall the following classical construction from \cite[Sec. 2.4]{Kelly}.  
 Let $S\in {\rm Perf}_C$, $Y\in {\rm Perf}_S$. We define the functor
\begin{align*}
{h}^{\rm top}_Y: {\rm Perf}_S^{\rm op}\to {\rm Cond}, \quad X\mapsto \uHom_S(X,Y).
\end{align*}
It is easy to check that 
\begin{equation}\label{Chicago10}
h^{\rm top}_Y\simeq \pi_*h_Y,
\end{equation} where $  \pi: S_{\proeet}\to S_{\proeet}^{\rm top}$ 
 is the canonical projection. In particular, since the pro-\'etale site is subcanonical  and thus the presheaf $h_Y$ is a sheaf, same is true of $h^{\rm top}_Y$. 
Moreover, by Lemma \ref{enriched1}, this sheaf is canonically ${\rm Cond}$-enriched. 

    Let $\sff\in \underline{\rm PSh}(S)$. The ${\rm Cond}$-{\em enriched Yoneda Lemma} yields  an existence of a $\rm Cond$-natural isomorphism
\begin{align}\label{lato24}
\uHom(h^{\rm top}_Y,\sff)=\int_{Y_1\in{\rm Perf}_S}\uHom(\uHom_S(Y_1,Y),\sff(Y_1))\stackrel{\sim}{\leftarrow}\sff(Y)
\end{align}
induced by the maps
$$
\phi_Y: \sff(Y)\to \uHom(\uHom_S(Y_1,Y),\sff(Y_1))
$$
obtained, via the adjunction  $\Hom(x,\uHom(y,z))\simeq \Hom(y,\uHom(x,z))$, $x,y,z\in {\rm Cond}$, from the structure maps
$$
\underline{\sff}_{Y_1,Y}: \uHom_S(Y_1,Y)\to \uHom(\sff(Y),\sff(Y_1)).
$$

 The special case of \eqref{lato24} with $\sff=h^{\rm top}_{Y_1}, Y_1\in{\rm Perf}_S,$ proves that the functor $h^{\rm top}_*$ is fully faithful. 
 Moreover, we have $h^{\rm top}_{Y_1\times_S Y_2}\simeq h^{\rm top}_{Y_1}\times h^{\rm top}_{Y_2}$.

   Equivalently, we can formulate an enriched  {\em co-Yoneda Lemma} as a ${\rm Cond}$-natural isomorphism 
$$
\int^{Y_1\in{\rm Perf}_S}\uHom_S(Y,Y_1)\otimes\sff(Y_1)\stackrel{\sim}{\to} \sff(Y)
$$
induced by the morphisms 
$$
\uHom_S(Y,Y_1)\otimes\sff(Y_1)\to \sff(Y),
$$
which are adjoints to the structure maps $\sff_{Y,Y_1}$. Here $\int^{Y_1\in{\rm Perf}_S}\uHom_S(Y,Y_1)\otimes\sff(Y_1)$ is the enriched coend: the  coequalizer given by the diagram 
$$
\xymatrix{
\coprod_{Y_1\in{\rm Perf}_S}\uHom_S(Y,Y_1)\otimes\sff(Y_1)& \coprod_{Y_1,Y_2\in{\rm Perf}_S} \uHom_S(Y,Y_1)\otimes\uHom_S(Y_1,Y_2)\otimes\sff(Y_2), \ar@<-1mm>[l]\ar@<1mm>[l]
}
$$
where the parallel arrows are induced  by the composition and the evaluation maps
$$
\uHom_S(Y,Y_1)\otimes \uHom_S(Y_1,Y_2)\to  \uHom_S(Y,Y_2),\quad \uHom_S(Y_1,Y_2)\otimes \sff(Y_2)\to \sff(Y_1).
$$
\subsubsection{Monoidal structures} \label{mon100} Let $S\in {\rm Perf}_C$ and let $\scc={\rm Cond}$. 
We will use the same notation for  tensor products and internal $\Hom$'s of ${\rm Cond}$-presheaves on the category ${\rm Perf}_S$ as in  Section \ref{cond-kol1} and Section \ref{solid1}. 

  \vskip2mm
 ($\bullet$) {\em Tensor product.}  The tensor products of presheaves  are defined objectwise and the structure maps are modified in a canonical way.  

\vskip2mm
 ($\bullet$) {\em Internal $\Hom$.} For two ${\rm Cond}$-presheaves $\sff, \sg$, we set 
\begin{align*}
\Hhom_{S^{\rm top}}(\sff,\sg) & :=\{Y\mapsto \uHom_{S^{\rm top}}(h^{\rm top}_Y\otimes\sff,\sg)\}\\
 &=\{Y\mapsto \int_{Y_1\in {\rm Perf}_S}\uHom(\uHom_S(Y_1,Y)\otimes\sff(Y_1),\sg(Y_1))\}.
\end{align*}
This is a ${\rm Cond}$-presheaf (with values in ${\rm Cond}$).

\vskip2mm
 ($\bullet$) {\em Adjunction.}   We have the usual tensor-hom adjunction: 
\begin{lemma} \label{adjoint1} Let $\sff_1,\sff_2,\sff_3\in \underline{\rm PSh}(S)$. We have  functorial isomorphisms in ${\rm Cond}$ and $\underline{\rm PSh}(S)$, respectively:
\begin{align*}
\uHom_{S^{\rm top}} (\sff_1\otimes\sff_2,\sff_3)\simeq \uHom_{S^{\rm top}}(\sff_1,\Hhom_{S^{\rm top}}(\sff_2,\sff_3)),\\
\Hhom_{S^{\rm top}} (\sff_1\otimes\sff_2,\sff_3)\simeq \Hhom_{S^{\rm top}}(\sff_1,\Hhom_{S^{\rm top}}(\sff_2,\sff_3)).
\end{align*}
\end{lemma}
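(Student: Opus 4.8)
The plan is to reduce the tensor–hom adjunction for ${\rm Cond}$-presheaves on ${\rm Perf}_S$ to the already–established closed symmetric monoidal structure on ${\rm Cond}$ itself, using the enriched (co-)Yoneda Lemma from Section \ref{enrichedyoneda} to "probe" presheaves by representables. First I would recall that by the enriched co-Yoneda Lemma every $\sff\in\underline{\rm PSh}(S)$ is the enriched coend $\int^{Y}\uHom_S(-,Y)\otimes\sff(Y)$, and that tensor products of ${\rm Cond}$-presheaves are computed objectwise; since the monoidal unit $\underline{\Z}$ (or rather the relevant $W\otimes(-)$) commutes with the colimits defining coends, the bifunctor $(-)\otimes\sff_2$ preserves enriched colimits in its first variable. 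This is the formal input that makes the adjunction a general nonsense statement about presheaf categories enriched over a closed symmetric monoidal base, exactly as in \cite[Sec. 3.3]{Kelly}.

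The key steps, in order, are: (i) unwind the definition of $\Hhom_{S^{\rm top}}(\sff_2,\sff_3)$ as the ${\rm Cond}$-presheaf $Y\mapsto \uHom_{S^{\rm top}}(h^{\rm top}_Y\otimes\sff_2,\sff_3)$, and combine this with the enriched Yoneda isomorphism $\uHom_{S^{\rm top}}(h^{\rm top}_Y,\sg)\simeq\sg(Y)$ applied to $\sg=\Hhom_{S^{\rm top}}(\sff_2,\sff_3)$ to see that it suffices to check the object-level isomorphism $\uHom_{S^{\rm top}}(\sff_1\otimes\sff_2,\sff_3)\simeq\uHom_{S^{\rm top}}(\sff_1,\Hhom_{S^{\rm top}}(\sff_2,\sff_3))$; (ii) write $\sff_1$ as a coend of representables via co-Yoneda, so that both sides, being enriched ends in the $\sff_1$-variable turned into ends over the coend, reduce to the case $\sff_1=h^{\rm top}_Y$; (iii) in that case, use enriched Yoneda on the left, $\uHom_{S^{\rm top}}(h^{\rm top}_Y\otimes\sff_2,\sff_3)$, which is by definition exactly $\Hhom_{S^{\rm top}}(\sff_2,\sff_3)(Y)$, and enriched Yoneda on the right, $\uHom_{S^{\rm top}}(h^{\rm top}_Y,\Hhom_{S^{\rm top}}(\sff_2,\sff_3))\simeq\Hhom_{S^{\rm top}}(\sff_2,\sff_3)(Y)$, so the two sides agree tautologically; (iv) deduce the internal ($\underline{\rm PSh}(S)$-valued) form of the adjunction by replacing $\sff_1$ with $h^{\rm top}_Y\otimes\sff_1$ and letting $Y$ vary, again invoking \eqref{lato24}; (v) note naturality and the $(*)$-evaluation compatibility $\uHom(-,-)(*)=\Hom(-,-)$ to pass between the enriched and set-level statements. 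Throughout one uses the tensor–cotensor adjunctions \eqref{mor10} and the displayed $\uHom_{\scc}(W\otimes\sff,\sg)\simeq\uHom_{\scc}(W,\uHom_{\scc}(\sff,\sg))$ to commute $W\otimes(-)$ past ends.

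The main obstacle I expect is purely bookkeeping rather than conceptual: one must be careful that the coend presentation $\sff_1\simeq\int^{Y}h^{\rm top}_Y\otimes\sff_1(Y)$ interacts correctly with the ends defining $\uHom_{S^{\rm top}}$ and $\Hhom_{S^{\rm top}}$, i.e. that $\uHom$ out of a coend is an end of $\uHom$'s (the "continuity" of $\uHom_{S^{\rm top}}(-,\sff_3)$ in its first variable), and that tensoring by a fixed presheaf $\sff_2$ — which is only objectwise monoidal — genuinely preserves these enriched colimits. This requires checking that the structure maps of $\sff_1\otimes\sff_2$ are the ones induced from those of $\sff_1$ so that the coend computed before tensoring maps isomorphically to the coend computed after; this follows because $(-)\otimes\sff_2$ is a left adjoint (its right adjoint being $\Hhom_{S^{\rm top}}(\sff_2,-)$, which is what we are constructing), so one has to be slightly careful to avoid circularity and instead argue directly that $W\otimes(-)$, hence objectwise tensoring, commutes with colimits in $\scc={\rm Cond}$, which is part of the standing bicompleteness hypothesis on $\scc$. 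Once that is in place the rest is a formal diagram chase, and the proof reduces — as one expects — to the sentence "this can be checked on representables, where it is the enriched Yoneda Lemma together with the closed monoidal structure of ${\rm Cond}$."
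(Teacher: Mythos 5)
Your proposal is correct and follows essentially the same route as the paper: the paper establishes the object-level adjunction by exactly the chain of manipulations you describe (commuting $\uHom$ with limits and colimits, the ${\rm Cond}$-level tensor--hom adjunction, Fubini for ends, and the co-Yoneda Lemma applied to $\int^{Y}\sff_1(Y)\otimes\uHom_S(Y_1,Y)$), which is your reduction-to-representables packaged as a single computation. The internal, $\underline{\rm PSh}(S)$-valued isomorphism is then deduced in the paper precisely as in your step (iv), by evaluating on $Y$, i.e.\ replacing $\sff_1$ by $h^{\rm top}_Y\otimes\sff_1$ and invoking the object-level statement.
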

\begin{proof} For the first isomorphism, the computation is standard: We have the following sequence of functorial isomorphisms in ${\rm Cond}$
\begin{align*}
\uHom_{S^{\rm top}}(\sff_1, & \Hhom_{S^{\rm top}}(\sff_2,\sff_3))\\
 & \simeq \int_{Y\in {\rm Perf}_S}\uHom(\sff_1(Y), \int_{Y_1\in {\rm Perf}_S} \uHom(\uHom_S(Y_1,Y)\otimes \sff_2(Y_1),\sff_3(Y_1)))\\
  & \simeq \int_{Y\in {\rm Perf}_S}\int_{Y_1\in {\rm Perf}_S}\uHom(\sff_1(Y),  \uHom(\uHom_S(Y_1,Y)\otimes \sff_2(Y_1),\sff_3(Y_1)))\\
  & \simeq \int_{Y\in {\rm Perf}_S}\int_{Y_1\in {\rm Perf}_S}\uHom(\sff_1(Y)\otimes \uHom_S(Y_1,Y)\otimes \sff_2(Y_1),\sff_3(Y_1))\\
    & \simeq \int_{Y_1\in {\rm Perf}_S}\int_{Y\in {\rm Perf}_S}\uHom(\sff_1(Y)\otimes \uHom_S(Y_1,Y)\otimes \sff_2(Y_1),\sff_3(Y_1))\\
      & \simeq \int_{Y_1\in {\rm Perf}_S}\uHom((\int^{Y\in {\rm Perf}_S}\sff_1(Y)\otimes \uHom_S(Y_1,Y))\otimes \sff_2(Y_1),\sff_3(Y_1))\\
       & \simeq \int_{Y_1\in {\rm Perf}_S}\uHom(\sff_1(Y_1)\otimes \sff_2(Y_1),\sff_3(Y_1))\\
        & \simeq \uHom_{S^{\rm top}}(\sff_1\otimes \sff_2,\sff_3).
\end{align*}
Here, the second and the  fifth isomorphisms hold because internal $\Hom$ in ${\rm Cond}$ commutes with limits and colimits in the second and the first variable, respectively; the  third one -- by adjunction in ${\rm Cond}$; the fourth one --  by  the Fubini theorem for ends; the sixth one  -- by co-Yoneda Lemma.

 For the second isomorphism of the lemma, evaluating on $Y\in {\rm Perf}_S$, it suffices to show that we have a natural isomorphism in ${\rm Cond}$ 
$$
\uHom_{S^{\rm top}} (h^{\rm top}_Y\otimes \sff_1\otimes\sff_2,\sff_3)\simeq \uHom_{S^{\rm top}}(h^{\rm top}_Y\otimes\sff_1,\Hhom_{S^{\rm top}}(\sff_2,\sff_3)).
$$
But this follows from the first isomorphism of the lemma.
\end{proof}

\vskip2mm
 ($\bullet$) {\em Generators.} The above implies the following: 
 \begin{lemma}\label{trump1} The  category of ${\rm Cond}$-presheaves 
$\underline{\rm PSh}(S)$ is generated by the family $\{h^{\rm top}_Y\otimes W_i\}, i\in I,$
 $Y\in {\rm Perf}_S$, where $\{W_i\}, i\in I$ is a family  of generators of ${\rm Cond}$. 
\end{lemma}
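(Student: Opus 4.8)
The plan is to verify the two standard conditions for a set of objects to be a generating family in the Grothendieck abelian category $\underline{\rm PSh}(S)$: that the representable-type objects $h^{\rm top}_Y\otimes W_i$ detect epimorphisms (equivalently, that every object is covered by a direct sum of these), using the Enriched Yoneda Lemma from Section \ref{enrichedyoneda} together with the fact that $\{W_i\}$ generates ${\rm Cond}$.

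First I would recall that, since limits and colimits in $\underline{\rm PSh}(S)$ are computed objectwise (see Section \ref{bicomplete}), a morphism $\sff\to\sg$ is an epimorphism precisely when $\sff(Y)\to\sg(Y)$ is an epimorphism in ${\rm Cond}$ for every $Y\in{\rm Perf}_S$. So it suffices to show that for any nonzero $\sff\in\underline{\rm PSh}(S)$ there is some $Y$ and some $i\in I$ and a nonzero map $h^{\rm top}_Y\otimes W_i\to\sff$. By the tensor-cotensor adjunction $\uHom(W_i\otimes h^{\rm top}_Y,\sff)\simeq\uHom(W_i,\uHom(h^{\rm top}_Y,\sff))$ and the enriched Yoneda isomorphism $\uHom(h^{\rm top}_Y,\sff)\simeq\sff(Y)$ from \eqref{lato24}, we get
$$
\Hom_{\scc}(h^{\rm top}_Y\otimes W_i,\sff)\simeq \Hom(W_i,\sff(Y)).
$$
Now pick $Y$ with $\sff(Y)\neq 0$; since $\{W_i\}_{i\in I}$ generates ${\rm Cond}$, there is an $i$ and a nonzero map $W_i\to\sff(Y)$, hence a nonzero map $h^{\rm top}_Y\otimes W_i\to\sff$.

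To upgrade "detects nonzero objects" to "is a generating family" in the usual sense, I would then observe that for an arbitrary $\sff$ one forms the canonical morphism $\bigoplus_{Y,i}\bigoplus_{\varphi\in\Hom(W_i,\sff(Y))} h^{\rm top}_Y\otimes W_i\to\sff$ (the coproduct over all such maps $\varphi$); by the computation above its cokernel $\sg$ has $\Hom(W_i,\sg(Y))=0$ for all $Y,i$, whence $\sg(Y)=0$ for all $Y$ (again because $\{W_i\}$ generates ${\rm Cond}$), i.e.\ $\sg=0$. Thus every object is a quotient of a direct sum of objects from the family, which is exactly the generation statement; Grothendieck-abelianness of $\underline{\rm PSh}(S)$ (ensuring this is the right notion) follows from bicompleteness and objectwise exactness of filtered colimits.

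The only real subtlety, and the step I expect to need the most care, is making sure the enriched Yoneda isomorphism \eqref{lato24} and the tensor adjunction \eqref{mor10} are applied in the enriched sense so that the displayed identification $\Hom_{\scc}(h^{\rm top}_Y\otimes W_i,\sff)\simeq\Hom(W_i,\sff(Y))$ is genuinely natural and records all morphisms (not just a subset of "continuous" ones); this is precisely what the enrichment formalism of Section \ref{ias1} buys us, and it is where the analogous statement for non-enriched topological presheaves would fail. Everything else is a formal manipulation with ends, coends, and objectwise (co)limits.
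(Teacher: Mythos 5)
Your key computation is exactly the one the paper uses: combining the tensor--hom adjunction from Section \ref{bicomplete} with the enriched Yoneda Lemma \eqref{lato24} to identify $\Hom_{S^{\rm top}}(h^{\rm top}_Y\otimes W_i,\sff)\simeq\Hom(W_i,\sff(Y))$. But the surrounding argument does not match the statement and contains a gap. First, the lemma is stated for $\scc={\rm Cond}$, i.e.\ presheaves valued in condensed \emph{sets}; $\underline{\rm PSh}(S)$ is then not abelian, so the framework you invoke (Grothendieck abelianness, nonzero objects, direct sums, cokernels, detection of epimorphisms) is not available for the statement as given. The paper instead verifies the definition of a generating (separating) family directly, following Al Hwaeer--Garkusha: given $\alpha_1\neq\alpha_2:\sff\to\sg$, choose $Y$ with $\alpha_{1,Y}\neq\alpha_{2,Y}$, use that the $W_i$ generate ${\rm Cond}$ to find $\overline{\beta}:W_i\to\sff(Y)$ with $\alpha_{1,Y}\overline{\beta}\neq\alpha_{2,Y}\overline{\beta}$, and transport $\overline{\beta}$ to $\beta:h^{\rm top}_Y\otimes W_i\to\sff$ through the displayed isomorphism. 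This works for any $\scc$ and needs no additive structure.

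Second, even in an abelian target such as ${\rm CondAb}$, your upgrade step fails as written. From the construction of the canonical map $\bigoplus_{Y,i,\varphi}h^{\rm top}_Y\otimes W_i\to\sff$ you only learn that every composite $W_i\to\sff(Y)\to\sg(Y)$ vanishes, where $\sg$ is the cokernel; since the $W_i$ are not assumed projective, a map $W_i\to\sg(Y)$ need not lift to $\sff(Y)$, so the asserted vanishing $\Hom(W_i,\sg(Y))=0$ does not follow. The correct conclusion from what you have is that the quotient map $\sff(Y)\to\sg(Y)$ is killed by precomposition with every map from a generator, hence (the $W_i$ being a separating family) it equals the zero map, and being an epimorphism this forces $\sg(Y)=0$; alternatively, observe that at each $Y$ the image of the canonical map contains the image of every $\varphi:W_i\to\sff(Y)$, and the canonical map $\bigoplus_{i,\varphi}W_i\to\sff(Y)$ is already an epimorphism because the $W_i$ generate. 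Either repair closes the gap, but note that the repair is essentially the separating-morphisms argument, i.e.\ the paper's route; also, the weaker property you establish first (every nonzero object receives a nonzero map from the family) is not by itself equivalent to generation, so the upgrade cannot be skipped.
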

\begin{proof} 
 We follow the argument in the proof of \cite[Th. 4.2]{Gr}. 
Let $\alpha_1, \alpha_2: \sff \to \sg$  be two  maps in $\underline{\rm PSh}(S)$ such that $\alpha_1\neq \alpha_2$. We want to show that there is $i \in  I$, $Y\in {\rm Perf}_S$, and a map
$\beta:  h^{\rm top}_Y\otimes W_i\to  \sff$  such that $\alpha_1\beta \neq   \alpha_2 \beta$.

 Since $\alpha_1\neq \alpha_2$, there exists $Y \in {\rm Perf}_S$ such that   $\alpha_{1,Y}\neq  \alpha_{2,Y}: \sff(Y)\to \sg(Y)$ in  ${\rm Cond}$. We fix such a $Y$. Since 
$ \{W_i\}, i\in I $, are generators of ${\rm Cond}$,  there exists a map $\overline{\beta} : W_i \to \sff(Y) $ such that $\alpha_{1,Y} \overline{\beta}\neq \alpha_{2,Y} \overline{\beta}$.
But, for any non-zero presheaf $\sff\in \underline{\rm PSh}(S)$, we have natural isomorphisms
\begin{align*}\Hom_{S^{\rm top}}(h^{\rm top}_Y\otimes W_i,\sff)\simeq \Hom(W_i,\uHom_{S^{\rm top}}(h^{\rm top}_Y,\sff))\simeq\Hom(W_i,\sff(Y));
\end{align*}
the second one by the enriched Yoneda Lemma. 
Hence  we can find a unique map $\beta : h^{\rm top}_Y\otimes W_i\to \sff$ corresponding to $\overline{\beta}$.
Now $\alpha_{1,Y} \overline{\beta} \neq  \alpha_{2,Y} \overline{\beta}$ implies that $\alpha_1 \beta \neq  \alpha_2 \beta$, as wanted.
\end{proof}

  \subsection{Derived version} \label{plane1} We study here the $\infty$-derived category of topologically enriched presheaves, show that it admits a canonical ${\rm Cond}$-enrichment, and compare it with the category of ${\rm Cond}$-enriched presheaves with values in corresponding $\infty$-derived ${\rm Cond}$-categories. 
\subsubsection{Derived categories of topologically enriched  presheaves. }  The  categories of topologically enriched  presheaves:
\begin{align}\label{kaczka21}
 & \underline{\rm PSh}(S,{\rm CondAb})_0, \quad \underline{\rm PSh}(S,{\rm Solid})_0,\\
 & \underline{\rm PSh}(S,{\underline{\Q}_p})_0, \quad \underline{\rm PSh}(S,\underline{\Q}_{p,\Box})_0\notag
\end{align}
are Grothendieck abelian with compact, projective  generators. Hence we have  $K$-injective as well as  $K$-flat resolutions. This yields derived (internal) $\Hom$'s,  
derived tensor products, and the (internal) tensor-hom adjunctions 
  in the corresponding derived $\infty$-categories
 \begin{align*}
 & \underline{\sd}(S,{\rm CondAb}), \quad \underline{\sd}(S,{\rm Solid}),\\
 & \underline{\sd}(S,{\underline{\Q}_p}), \quad \underline{\sd}(S,\underline{\Q}_{p,\Box}).
\end{align*}
These categories are defined by taking derived $\infty$-categories of the corresponding $(-)_0$ categories from \eqref{kaczka21} and then   canonically enriching them  
via the global sections of the derived internal $\Hom$'s. The enriched tensor-hom adjunctions are inherited from the internal ones. 

        We have the following result concerning the relationship between constant presheaves and monoidal structures:
  \begin{lemma} \label{form23-enriched}The enriched   analog of Lemma \ref{form23} holds.
  \end{lemma}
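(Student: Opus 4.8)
The plan is to replay the proof of Lemma~\ref{form23} in the topologically enriched setting, using that the three ingredients on which that proof rests are all available for enriched (pre)sheaves. First, tensor products of enriched presheaves are computed objectwise (Section~\ref{mon100}), so they agree with the non-enriched ones on underlying objects. Second, the enriched internal $\Hom$ $\Hhom^{\Box}_{S,\Q_p}(-,-)$ and the global $\uHom^{\Box}_{S,\Q_p}(-,-)$ are computed by enriched (co)ends over ${\rm Perf}_S$, compatibly with the tensor--hom adjunction of Lemma~\ref{adjoint1}. Third, the evaluation functors $\sff\mapsto\sff(Y)$ are exact and preserve all limits and colimits, so that $\otimes^{\LL_{\Box}}_{\Q_p}$ and $\R\Hhom^{\Box}_{S,\Q_p}$ in $\underline{\sd}(S,\underline{\Q}_{p,\Box})$ are again computed objectwise, resp.\ by derived enriched (co)ends. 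To this I would add one structural observation: $S$ is the (enriched) terminal object of ${\rm Perf}_S$ since $\uHom_S(Y,S)=*$, so an enriched end $\int_{Y\in{\rm Perf}_S}G(Y)$ whose integrand is constant in the contravariant variable reduces to $G(S)$; derived, this identifies $\R\uHom^{\Box}_{S,\Q_p}(h^{\rm top}_S,-)$ with $\R\Gamma(S^{\rm top}_{\proeet},-)$.

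For the enriched analog of claim~(1), both $W\otimes^{\LL_{\Box}}_{\Q_p}\sff$ and $\underline{W}\otimes^{\LL_{\Box}}_{\Q_p}\sff$ are, by the objectwise description, the enriched presheaves $Y\mapsto W\otimes^{\LL_{\Box}}_{\Q_p}\sff(Y)$ and $Y\mapsto\underline{W}(Y)\otimes^{\LL_{\Box}}_{\Q_p(Y)}\sff(Y)$, and the comparison map is induced objectwise; it is a quasi-isomorphism on strictly totally disconnected $Y$ by the identical computation as in the proof of Lemma~\ref{form23}(1), using $\underline{W}(Y)=\scc(\pi_0(Y),W)\simeq W\otimes^{\Box}_{\Q_p}\scc(\pi_0(Y),\Q_p)$ (\cite[Cor.~10.5.4]{PGS}), and this suffices since everything in sight is a sheaf. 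Compatibility with the enriched structure maps is automatic, the comparison being $\Q_p(Y)$-linear and built from identities. For claim~(2), I would write $\underline{W}\simeq W\otimes^{\Box}_{\Q_p}\underline{\Q}_p$ (claim~(1) at the level of $\pi_0$), so that by the enriched-end description together with base change and Lemma~\ref{adjoint1}
\[
\uHom^{\Box}_{S,\Q_p}(\underline{W},\sff)\simeq\int_{Y\in{\rm Perf}_S}\uHom^{\Box}_{\Q_p(Y)}\bigl(W\otimes^{\Box}_{\Q_p}\Q_p(Y),\,\sff(Y)\bigr)\simeq\int_{Y\in{\rm Perf}_S}\uHom^{\Box}_{\Q_p}\bigl(W,\,\sff(Y)\bigr);
\]
since $\uHom^{\Box}_{\Q_p}(W,-)$ is a right adjoint it commutes with the enriched end, and the end of $\sff$ itself is $\sff(S)$ because $S$ is enriched-terminal, yielding $\uHom^{\Box}_{\Q_p}(W,\sff(S))$.

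Claims~(3) and~(4) then follow formally by applying~(1) and~(2) to a $K$-injective resolution of $\sff$ in $\underline{\rm Sh}(S,\underline{\Q}_{p,\Box})$, exactly as in the proof of Lemma~\ref{form23} (for~(4) one further uses, as there, that a $W$ of compact type has $\uHom^{\Box}_{\Q_p}(W,-)\simeq W^{*}\otimes^{\LL_{\Box}}_{\Q_p}(-)$, plus $\R\Gamma(S^{\rm top}_{\proeet},\sff)\simeq I^{\bullet}(S)$ for $\sff\to I^{\bullet}$ the chosen resolution). The step demanding the most care --- and the main obstacle --- is that in the enriched world the (co)ends range over ${\rm Perf}_S$ equipped with the \emph{topologized} mapping spaces $\uHom_S(Y_1,Y)$ of Lemma~\ref{prodiscrete}, not the discrete ones, so one must check that this extra topology does not obstruct the reduction of $\int_{Y}\uHom^{\Box}_{\Q_p}(W,\sff(Y))$ to $\uHom^{\Box}_{\Q_p}(W,\sff(S))$ (and, in the derived version, the interchange of $\uHom^{\Box}_{\Q_p}(W,-)$ with the enriched end). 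I expect this to be a routine but slightly tedious diagram check: after the reduction $\underline{W}\simeq W\otimes^{\Box}_{\Q_p}\underline{\Q}_p$ the integrand is constant in the contravariant variable, so the dinaturality/wedge conditions defining the enriched end over each morphism $Y_1\to Y$ collapse to the ordinary cone conditions for $\lim_{{\rm Perf}_S}$, exactly as in the non-enriched argument; the topology on $\uHom_S(Y_1,Y)$ enters only through $\underline{W}$, which is $\pi_*$ of a constant sheaf and hence a rigid enriched presheaf in the sense of Section~\ref{ias2}.
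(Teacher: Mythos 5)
Your argument is essentially the paper's own proof: claim (1) is done objectwise exactly as in Lemma \ref{form23}, claim (2) by rewriting $\underline{W}\simeq W\otimes^{\Box}_{\Q_p}\underline{\Q}_p$, using the enriched end description and adjunction, commuting $\uHom^{\Box}_{\Q_p}(W,-)$ with the end, and identifying $\int_Y\sff(Y)$ with $\sff(S)$ (the paper does this via the equalizer description, i.e.\ the enriched Yoneda Lemma at the terminal object $S$, which is the same observation you make), and claims (3)--(4) by applying (1)--(2) to a resolution. The only cosmetic difference is that the paper's enriched ends range over ${\rm sPerf}_S$ rather than ${\rm Perf}_S$, which in fact removes the sheaf-theoretic caveat you add in claim (1).
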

  \begin{proof}
  Claim (1) of Lemma \ref{form23} has the same proof in this setting. 
  For the second claim, we use the  enriched end description 
  \begin{align*}
  \uHom^{\Box}_{S,\Q_p}(\underline{W} ,\sff)  & \stackrel{\sim}{\to}  \uHom^{\Box}_{S,\Q_p}({W} \otimes_{\Q_{p,\Box}}\underline{\Q}_p,\sff)  = \int_{Y\in {\rm sPerf}_S}  \uHom_{\Q_p(Y)}(W\otimes_{\Q_{p,\Box}}\Q_p(Y) ,\sff(Y))\\
   & \simeq \int_{Y\in {\rm sPerf}_S} \uHom_{\Q_p}(W ,\sff(Y))\simeq  \uHom_{\Q_p}(W ,\int_{Y\in {\rm sPerf}_S} \sff(Y))\simeq  \uHom_{\Q_p} (W,\sff(S)).
  \end{align*}
  Here, we wrote $\int_{Y\in {\rm sPerf}_S} \sff(Y)$ for the equalizer
$$
\xymatrix{
\prod_{Y\in {\rm sPerf}_S}\sff(Y)\ar@<-1mm>[r] \ar@<1mm>[r]  & \prod_{Y,Y_1\in {\rm sPerf}_S}\uHom(\uHom_S(Y_1,Y),\sff(Y_1))
}
$$
and  the first isomorphism follows from the first claim of the lemma. 
  
  The third claim of the lemma is the same as  the second one and the fourth one follows from the third one. 
  \end{proof}

\subsubsection{Condensed vs solid topologically enriched presheaves} 
 The following result is an enriched  version of Proposition \ref{nyear1a}. 
  \begin{proposition}\label{nyear1b}
  \begin{enumerate}
 \item  The forgetful functor
 \begin{equation}\label{form1b}
 \underline{{\rm PSh}}(S, {\rm Solid})\to  \underline{{\rm PSh}}(S,{\rm CondAb})
 \end{equation}
 is  (topologically) fully faithful. The essential image is
  stable under all limits, colimits, and extensions.  Moreover, the
inclusion \eqref{form1b} admits a (topological)  left adjoint, the solidification functor, 
\begin{equation}
 \label{form11b}
\underline{{\rm PSh}}(S,{\rm CondAb})\to  \underline{{\rm PSh}}(S,{\rm Solid}): \quad \sff \mapsto \sff^{\Box}, 
 \end{equation}
which   preserves all colimits and is  symmetric monoidal. We have analogous claims for the forgetful functor  
${\underline{\rm PSh}}(S,\Q_{p,\Box}) \to \underline{{\rm PSh}}(S^{\rm top},\Q_p)$.
\item 
  The  forgetful functor
  \begin{equation}\label{form2b}
  \underline{\sd}(S,{\rm Solid})\to \underline{\sd}(S,{\rm CondAb})
  \end{equation}
   is  (topologically) fully faithful.   It preserves  all limits and colimits. 
\item The functor \eqref{form2b}  admits a  left adjoint
$$\sff  \mapsto  \sff^{\LL_{\Box}}: \underline{\sd}(S,{\rm CondAb}) \to \underline{\sd}(S,{\rm Solid}),
$$
which is the (topological)  left derived functor of the solidification functor $(-)^{\Box}$. It is symmetric monoidal. We have analogous claims for the forgetful functor
 $\underline{\sd}(S,\underline{\Q}_{p,\Box})\to \underline{\sd}(S,\underline{\Q}_{p})$.
\item  For  $\sff_1, \sff_2 \in \underline{\sd}(S,{\rm Solid})$, the natural map
$$\R{\Hhom}^{\Box}_{S}(\sff_1,\sff_2) \to  \R{\Hhom}_{S^{\rm top}}(\sff_1,\sff_2)$$
is a quasi-isomorphism. We have an analogous claim for the $\underline{\Q}_p$-sheaves.
\end{enumerate}
  \end{proposition}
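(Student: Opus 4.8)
The whole statement is the enriched counterpart of Proposition~\ref{nyear1a}, and the plan is to transcribe that proof, making two systematic replacements: the sheaf-theoretic projective generators $\Z[h^{\delta}_Y]\otimes\Z[T]$ are replaced by the enriched Yoneda objects $h^{\rm top}_Y\otimes\Z[T]$ of Lemma~\ref{trump1}, and ``evaluation on $Y$'' is replaced by the enriched Yoneda Lemma of Section~\ref{enrichedyoneda}. We work throughout over ${\rm sPerf}_C$, so that, by the remark after Lemma~\ref{trump1} together with the discussion of \'etale {\rm NTVS}'s, the family $\{h^{\rm top}_Y\otimes\Z[T]\}$ with $Y\in{\rm sPerf}_S$ and $T$ extremally disconnected is a set of compact projective generators of $\underline{{\rm PSh}}(S,{\rm CondAb})$: projectivity holds because the enriched Yoneda Lemma gives $\Hom_{S^{\rm top}}(h^{\rm top}_Y\otimes\Z[T],\sff)\simeq\Hom(\Z[T],\sff(Y))$, evaluation being exact on presheaves and $\Z[T]$ projective for $T$ extremally disconnected. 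Likewise $\{h^{\rm top}_Y\otimes^{\Box}\Z[T]\}$ is a set of compact projective generators of $\underline{{\rm PSh}}(S,{\rm Solid})$.

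For claim (1): topological full-faithfulness of \eqref{form1b} reduces, via the enriched end formula \eqref{kol12} for the internal $\Hom$, to the full-faithfulness of ${\rm Solid}\hookrightarrow{\rm CondAb}$, exactly as in Lemma~\ref{solidification1}; stability of the essential image under limits and colimits is immediate from Proposition~\ref{nyear1} since these are computed objectwise in $\underline{{\rm PSh}}$; and stability under extensions is even simpler than in the sheaf case of Proposition~\ref{nyear1a}, because an extension $0\to\sff_1\to\sff_2\to\sff_3\to0$ of enriched presheaves is objectwise an extension in ${\rm CondAb}$, so $\sff_2(Y)$ is solid by Proposition~\ref{nyear1}. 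The left adjoint is the objectwise solidification $\sff\mapsto\sff^{\Box}=\{Y\mapsto\sff(Y)^{\Box}\}$ (here no sheafification is needed), which preserves colimits objectwise and is symmetric monoidal by Proposition~\ref{nyear1} together with the presheaf form of Lemma~\ref{tensor}. The $\underline{\Q}_p$-version is identical.

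For claims (2) and (4), which are proved together: by the enriched end description it suffices to show that for $\sff_1,\sff_2\in\underline{\sd}(S,{\rm Solid})$ the natural map $\R\Hhom^{\Box}_{S}(\sff_1,\sff_2)\to\R\Hhom_{S^{\rm top}}(\sff_1,\sff_2)$ is a quasi-isomorphism. Choose a resolution $P_\bullet\to\sff_1$ with $P_j=\bigoplus_k(h^{\rm top}_{Y_{j,k}}\otimes\Z[T_{j,k}])$, $T_{j,k}$ extremally disconnected. The crucial step is that the objectwise solidification $P_\bullet^{\Box}\to\sff_1$ is still a resolution: for $X\in{\rm sPerf}_S$ one has $P_j(X)=\bigoplus_k\Z[\uHom_S(X,Y_{j,k})\times T_{j,k}]$, the condensed set $\uHom_S(X,Y_{j,k})\times T_{j,k}$ is prodiscrete by Lemma~\ref{prodiscrete}, hence by Lemma~\ref{paris20} the map $P_j(X)^{\LL_{\Box}}\to P_j(X)^{\Box}$ is a quasi-isomorphism; combined with $\sff_1(X)^{\LL_{\Box}}\stackrel{\sim}{\to}\sff_1(X)^{\Box}=\sff_1(X)$ (as $\sff_1(X)$ is solid), applying $(-)^{\LL_{\Box}}$ to $P_\bullet(X)\stackrel{\sim}{\to}\sff_1(X)$ gives $P_\bullet(X)^{\Box}\stackrel{\sim}{\to}\sff_1(X)$. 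Since the $P_j$ are projective in $\underline{{\rm PSh}}(S,{\rm CondAb})$ and the $P_j^{\Box}$ are projective in $\underline{{\rm PSh}}(S,{\rm Solid})$, the assertion reduces (the internal version following by the same argument applied to $h^{\rm top}_Y\otimes(-)$) to the bijectivity of $\Hom^{\Box}_{S}(h^{\rm top}_Y\otimes^{\Box}\Z[T],\sff_2)\to\Hom_{S^{\rm top}}(h^{\rm top}_Y\otimes\Z[T],\sff_2)$, which by the enriched Yoneda Lemma is $\Hom^{\Box}(\Z[T],\sff_2(Y))\to\Hom(\Z[T],\sff_2(Y))$, bijective by claim (1). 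Preservation of colimits in (2) follows by evaluating on $Y\in{\rm sPerf}_S$, and preservation of limits from $(\R\lim_i\sff_i)(Y)\simeq\R\lim_i\sff_i(Y)$ and Proposition~\ref{nyear1}. The $\underline{\Q}_p$-case is the same, using that $\Z[T]$ is a flat solid abelian group.

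For claim (3): since $\underline{{\rm PSh}}(S,{\rm CondAb})_0$ has enough projectives, the left derived functor $(-)^{\LL_{\Box}}$ of $(-)^{\Box}$ exists, and its left-adjointness to \eqref{form2b} is checked on the projective generators $h^{\rm top}_Y\otimes\Z[T]$ via the enriched Yoneda Lemma and claim (1); this yields inner left-adjointness and hence symmetric monoidality by the same chain of adjunctions as in the proof of Proposition~\ref{nyear1a}(3): for $\sg\in\underline{\sd}(S,{\rm Solid})$,
$$\R\Hom^{\Box}_{S}((\sff_1\otimes^{\LL}\sff_2)^{\LL_{\Box}},\sg)\simeq\R\Hom_{S^{\rm top}}(\sff_1\otimes^{\LL}\sff_2,\sg)\simeq\R\Hom_{S^{\rm top}}(\sff_1,\R\Hhom_{S^{\rm top}}(\sff_2,\sg)),$$
and, rewriting $\R\Hhom_{S^{\rm top}}(\sff_2,\sg)\simeq\R\Hhom^{\Box}_{S}(\sff_2^{\LL_{\Box}},\sg)$ by claim (4) and iterating, this equals $\R\Hom^{\Box}_{S}(\sff_1^{\LL_{\Box}}\otimes^{\LL_{\Box}}\sff_2^{\LL_{\Box}},\sg)$; the $\underline{\Q}_p$-version is identical. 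The main obstacle is precisely the exactness of $P_\bullet^{\Box}$ used in (2): in the non-enriched sheaf setting this rested on $w$-contractibility of $Y$ and discreteness of $\Hom_S(Y_1,Y)$, whereas here one must exploit the prodiscreteness of the \emph{enriched} mapping objects $\uHom_S(Y_1,Y)$ (Lemma~\ref{prodiscrete}) to promote the module-level identity $\Z[W]^{\LL_{\Box}}\simeq\Z[W]^{\Box}$ of Lemma~\ref{paris20} to the presheaf level, and invoke cleanly the flatness of $\Z[T]$ (and its projectivity for $T$ extremally disconnected) as a solid module; everything else is a formal transcription of the proof of Proposition~\ref{nyear1a}.
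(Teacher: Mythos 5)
Your proposal is correct and follows essentially the same route as the paper: the paper's proof simply says that the argument for Proposition~\ref{nyear1a} goes through verbatim after replacing $h^{\delta}_Y$ by $h^{\rm top}_Y$, with Lemma~\ref{prodiscrete} supplying the prodiscreteness needed (via Lemma~\ref{paris20}) for the second claim, which is exactly the transcription you carry out, including the use of the enriched Yoneda Lemma for the generators $h^{\rm top}_Y\otimes\Z[T]$ and the exactness of the objectwise-solidified resolution. Your write-up just makes explicit the details the paper leaves implicit.
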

  \begin{proof} The proof of  Proposition \ref{nyear1a} goes through almost verbatim (we just need to replace $h^{\delta}_Y$ with $h^{\rm top}_Y$) since we have Lemma \ref{prodiscrete} (for the second claim). 
      \end{proof}
\begin{remark} (1) Everything in Section \ref{ias1} and  above in Section \ref{plane1} outside of  the definition of rigid sheaves  works as well  with the category ${\rm Perf}_C$ replaced by the category ${\rm sPerf}_C$ of strictly totally disconnected affinoids    $S\in {\rm Perf}_C$. 
We note that we do have an analog of Lemma \ref{enriched1} by restricted the sheaves $\pi_*\sff$ to the category ${\rm sPerf}_C$. {\bf In what follows we will use the category ${\rm sPerf}_C$ as test objects.} 

 (2) It is possible to define a notion of a  topologically enriched sheaf as a  finite-products preserving enriched functor. Such objects form a  full reflective subcategory of topologically enriched presheaves in the sense of \cite{BAC}, \cite{Ros} (the key point being that the naive sheafification of a topologically enriched  presheaf is canonically enriched). In fact in the first draft of this paper we have worked in this setting only later realizing that this does not add anything (besides extra layer of complexity) to the applications we had in mind. It might be however useful in the future. 
 \end{remark}
\begin{definition}
 We will call presheaves from $\underline{{\rm PSh}}({\rm Spa}(C),\underline{\Q}_{p,\Box})$ {\em Topological Vector Spaces} (${\rm TVS}$ for short). 
 \end{definition}
 \begin{remark}We will also use the  term ${\rm TVS}$  for the derived category $\underline{\mathcal{D}}({\rm Spa}(C),\underline{\Q}_{p,\Box})$ of ${\rm TVS}$'s  and sometimes, abusively, for the analogous categories with values in condensed Abelian groups,    etc (if this does not cause confusion).
\end{remark}

  \subsubsection{Notation} In the rest of this section, unless otherwise stated, "category" etc. means "$\infty$-category" etc. 
   We call  groupoids  "spaces" and write $\sss$ for their
category.  For presentable $\infty$-categories $\scc_1,\scc_2$, we denote by ${\rm Fun}^{\LL}(\scc_1,\scc_2)$  the full $\infty$-subcategory of the
functor category ${\rm Fun}(\scc_1,\scc_2) $ of the functors which preserve colimits.
  We call algebra objects
$\sv\in  {\rm Alg}({\rm Pr}) $  {\em presentably monoidal categories}.  In plain language, these are monoidal categories  $\sc$
whose underlying categories are presentable and, moreover,   the tensor product functors $ \sv\times\sv\to \sv$ 
preserve colimits separately in each  variable. For $\sv\in  {\rm Alg}({\rm Pr})$, we set ${\rm Pr}_{\sv}:= {\rm RMod}_{\sv}({\rm Pr})$.  We call  module objects $\sm \in  {\rm RMod}_{\sv}({\rm Pr})$  {\em presentable
right $\sv$-module categories} (and similarly for left modules).
   \subsubsection{Enriched categories} In this section, we recall a particularly friendly definition of enriched $\infty$-categories\footnote{In the terminology of \cite{RZ},
these are valent enriched categories. We will simply call them
enriched \(\infty\)-categories. By  \cite[Prop. 4.5.3]{Hin20}, \cite{Mac21}  this definition agrees with the definition in
\cite{GH15}.}
due to Hinich (see  \cite{Hin20}) following the exposition of  Reutter--Zetto from \cite[Sec. 2.4]{RZ}. For  $\sv \in {\rm  Alg}({\rm Pr})$,  a $\sv$-enriched category is defined 
as a space $\sx$ (of objects) together with an algebra in the category ${\rm Fun}(\sx^{\rm op} \times  \sx, \sv)$; the latter being 
equipped with a certain  monoidal structure.

   More specifically, we have the following: 
 \begin{definition}(\cite[Prop. 4.5.3]{Hin20}) \label{def2} Let $\sv\in {\rm  Alg}({\rm Pr})$ and $\sx \in  \sss$. The category of
valent  $\sv$-enriched categories with space of objects $\sx$ is the category
$${\rm Cat}_{\sx}(\sv) := {\rm Alg}({\rm End}^{\LL}_{\sv}(\spp(\sx) \otimes \sv)) .$$
It is a presentable category. 
\end{definition}
Here  ${\rm End}^L_V(\spp(\sx) \otimes  \sv) $ comes equipped with the composition monoidal structure defined via the equivalences
\begin{equation}\label{kicius1}{\rm End}^L_{\sv}(\spp(\sx) \otimes \sv) \simeq {\rm Fun}^L_{\sv}(\spp(\sx) \otimes \sv,{\rm Fun}(\sx^{\rm op}, \sv))\simeq  
{\rm Fun}(\sx^{\rm op} \times  \sx, \sv) \in  {\rm Alg}({\rm Pr}),
\end{equation}
and the  monoidal structure on ${\rm Fun}(\sx^{\rm op} \times  \sx, \sv)$ given by composition of functors (see \cite[Cor. 2.29]{RZ} for details).

\begin{remark} (1) The first equivalence in \eqref{kicius1} follows from the equivalence
$$
\spp(\sx) \otimes \sv\simeq {\rm Fun}(\sx^{\rm op}, \sv).
$$
For the second one,  recall that 
  we can think of 
$\spp(\sx) \otimes \sv$ as the free presentable $\sv$-module category on $\sx$: we have  an equivalence
$$ {\rm Fun}^{\LL}_{\sv}(\spp(\sx) \otimes \sv, \sm) \simeq {\rm Fun}(\sx, \sm),$$
for any   $\sm\in {\rm Pr}_{\sv}$.
This equivalence is obtained  via 
 precomposing with
the composite functor
$$\sx \to \spp(\sx) \simeq \spp(\sx) \otimes \sss \to \spp(\sx) \otimes \sv,$$
where the third map comes from the unit   $\sss\to  \sv$,

   (2) Via the equivalences
$${\rm End}^{\LL}_{\sv}(\spp(\sx) \otimes  \sv)\simeq  {\rm Fun}(\sx^{\rm op} \times  \sx, \sv) \simeq  \spp(\sx \times \sx^{\rm op}) \otimes  \sv\simeq  {\rm End}^{\LL}(\spp(\sx)) \otimes  \sv,
$$
the composition monoidal structure from Definition \eqref{def2}  agrees with the induced monoidal
structure as a product of algebras ${\rm End}^{\LL}(\spp(\sx)) $ and $\sv$ in ${\rm Alg}({\rm Pr}) $ by \cite[Prop. 3.10]{BMS24}.

 (3)  The {\em graph}  of a  $\sv$-enriched category $\se \in {\rm Cat}_{\sx}(\sv) $ is its underlying
object in  ${\rm Fun}(\sx^{\rm op}\times\sx, \sv) \simeq {\rm End}^{\LL}_{\sv}(\spp(\sx) \otimes  \sv).$ We will denote it by $\uHom_{\se}(-,-)$. 

\end{remark}
\subsubsection{Enriched presheaves and enriched  Yoneda Lemma} Here again we follow the exposition from \cite[Sec. 2.6]{RZ}.  By definition, a $\sv$-enriched
category $\se$ with space of objects $\sx$ is an algebra in the category
${\rm End}^{\LL}_{\sv}(\spp(\sx)\otimes \sv). $ Since this is the endomorphism algebra of  $\spp(\sx)\otimes \sv \in  {\rm Pr}_{\sv}$ and
hence acts on it  from the left, we may take the category of left $\se$-modules
in $\spp(\sx) \otimes \sv$:
\begin{definition} \label{def3} Let  $\sx \in \sss, V \in  {\rm Alg}({\rm Pr}).$ For  $\se \in  {\rm Cat}_{\sv}(\sx)= {\rm Alg}({\rm End}^{\LL}_\sv(\spp(\sx) \otimes  \sv))$,
its {\em enriched presheaf category} 
$$\spp_{\sv}(\se) := {\rm LMod}_{\se}(\spp(\sx) \otimes  \sv).$$  It is a presentable right $\sv$-module category, i.e.,  an object of ${\rm Pr}_{\sv}$.
\end{definition}
Hence, an enriched presheaf of a $\sv$-category $\se$ with
space of objects $\sx$ is  an object of ${\rm LMod}_{\se}(\spp(\sx) \otimes \sv) \simeq  {\rm LMod}_{\se}({\rm Fun}(\sx^{\rm op}, \sv)),$
i.e., a functor $\sff : \sx^{\rm op}\to \sv$  equipped  with a left action of the algebra 
 $\uHom_{\se} \in {\rm Fun}(\sx^{\rm op} \times  \sx, \sv)$  via the canonical action of $ {\rm Fun}(\sx^{\rm op} \times  \sx, \sv) = {\rm End}^{\LL}_{\sv}({\rm Fun}(\sx^{\rm op}, \sv)) $ on
${\rm Fun}(\sx^{\rm op}, \sv)$. This amounts to morphisms in
 $ \sv$
$$\uHom_{\se}(X, X^{\prime}) \otimes \sff(X^{\prime}) \to \sff(X),$$
which are functorial  in $ X, X^{\prime} \in  \sx$ and coherently compatible with the composition in $\se$.
 
    The {\em enriched Yoneda functor} of $\se$ is the composition
$$h^{\sv}_{\se} : \sx\lomapr{h_{\sx}} \spp(\sx)
\lomapr{{\rm Id}\otimes 1_{\sv}} \spp(\sx) \otimes \sv\lomapr{\rm Free} {\rm  LMod}_{\se}(\spp(\sx)\otimes \sv) = \spp_{\sv}(\se),$$
where $h_{\sx}$ denotes the ordinary Yoneda functor of $\sx$ and ${\rm Free}$  the free $\se$-module functor.
 For $X \in  \sx$, 
 the evaluation functor ${\rm ev}_X : \spp_{\sv}(\se) \to \sv $ is  the composition
$$\spp_{\sv}(\se) = {\rm LMod}_{\se}(\spp(\sx) \otimes \sv)\lomapr{\rm Forget}
 \spp(\sx) \otimes  \sv \simeq  {\rm Fun}(\sx^{\rm op}, \sv) \lomapr{{\rm ev}_X} \sv.$$
In plain language, the evaluation functor ${\rm ev}_X : \spp_{\sv}(\se) \to  \sv$ sends $\sff$ to $\sff(X)$
and the  presheaf $h^{\sv}_{\se}(X) $ unpacks to the functor $\uHom_{\se}(-, X) : \sx^{\rm op} \to  \sv.$
 
\begin{lemma}{\rm({\em Enriched Yoneda Lemma})} \label{enr-kol} The functor
${\rm ev}_X : \spp_{\sv}(\se) \to \sv $ is right adjoint to the functor $h^{\sv}_{\se}(X) \otimes - : \sv\to   \spp_{\sv}(\se)$  and  thus it is
equivalent to $ \uHom_{\spp_{\sv}(\se)}(h^{\sv}_{\se}(X),-):$
we have
$$\uHom_{\spp_{\sv}(\se)}(h^{\sv}_{\se}(X),\sff)\simeq   {\rm ev}_X(\sff), \quad \sff\in \spp_{\sv}(\se).$$
\end{lemma}
In particular,  for $X,X^{\prime}\in\sx$, we have 
$$\uHom_{\spp_{\sv}(\se)}(h^{\sv}_{\se}(X),h^{\sv}_{\se}(X^{\prime}))\simeq  \uHom_{\se}(X, X^{\prime}).$$
\subsubsection{Generators} We will use the following terminology. 
 Let $\scc \in  {\rm Pr}$.  A full subcategory $\scc_0\subset \scc$ {\em generates} $\scc$ under
colimits if $\scc$ is the smallest full subcategory of $\scc$ containing  $\scc_0$ and  closed under small
colimits. Let  $\sv\in  {\rm Alg}({\rm Pr}) $ and $\scc \in  {\rm Pr}_{\sv}$. A full subcategory $\scc_0 \subset  \scc$ {\em generates
$\scc$  under colimits and tensoring} if $ \scc$ is the smallest full subcategory of $ \scc $ containing 
$\scc_0 $ and is closed under small colimits and tensoring with objects of $\sv$. 
 
   By \cite[Th. 5.5]{RZ}, for $ \se \in {\rm  Cat}_{\sx}(\sv)$
the presheaf category $\spp_{\sv}(\se)$ is generated under colimits and tensoring by the image of the Yoneda functor
$h^{\sv}_{\se}:  \sx\to  \spp_{\sv}(\se).$ 
\subsubsection{Example: Derived topologically enriched presheaves} The setting we will be mostly  interested in is the following: We set
$
\sv:=\sd({\rm CondAb}), S\in {\rm sPerf}_C. $ We take for $ \sx$ the 
the discrete groupoid ${\rm sPerf}^{\delta}_C $ of objects from ${\rm sPerf}_C$ with only identity morphisms. 
We induce the algebra $\se$  by the graph $\uHom_{\se}(-,-):=\Z[\uHom_S(-,-)]$. We have $h^{\sv}_{\se}(X)=\Z[h^{\rm top}_X]$, for $X\in\sx$. We set
$$
\underline{{\rm PSh}}(S,\sd({\rm CondAb}))_0:=\spp_{\sv}(\se). 
$$
Hence, a topologically  enriched presheaf is  a functor $\sff : {\rm sPerf}_S^{\delta, \rm op}\to \sd({\rm CondAb})$  together with  morphisms in $\sd({\rm CondAb})$
\begin{equation}\label{berlin1}
\Z[\uHom_{S}(X, X^{\prime})] \otimes^{\LL} \sff(X^{\prime}) \to \sff(X),
\end{equation}
which are  coherently compatible with the composition in $\se$. The category $\underline{{\rm PSh}}(S,\sd({\rm CondAb}))_0$ has  a canonical presentable right $\sd({\rm CondAb})$-module structure. 
It  is generated under colimits and tensoring by the presheaves $\Z[h^{\rm top}_X]$, $X\in \sx$; these generators are compact.   But we can do better.
\begin{lemma}\label{denver1}
We can upgrade $\underline{{\rm PSh}}(S,\sd({\rm CondAb}))_0$ to a $\sd({\rm CondAb})$-enriched category, which  we will denote by $\underline{{\rm PSh}}(S,\sd({\rm CondAb}))$.
\end{lemma}
\begin{proof}
It suffices to show that, for every $\sff\in \underline{{\rm PSh}}(S,\sd({\rm CondAb}))_0$, the functor $(-)\otimes^{\LL}\sff: \sd({\rm CondAb})\to \underline{{\rm PSh}}(S,\sd({\rm CondAb}))_0$ has a right adjoint $\R\uHom_{S^{\rm top}}(\sff,-)$ (see \cite[C.1.11]{HM}). (Then $\underline{{\rm PSh}}(S,\sd({\rm CondAb}))_0$ is naturally a $\sd({\rm CondAb})$-enriched category with the graph given by $\R\uHom_{S^{\rm top}}$.) We set
$$
\R\uHom_{S^{\rm top}}(\sff,\sg): \{T\mapsto \R\Hom_{S^{\rm top}}(\sff\otimes^{\LL}\Z[T],\sg)\}.
$$
We need to check that 
$$
\R\Hom_{S^{\rm top}}(\sff\otimes^{\LL}W,\sg)\simeq \R\Hom(W, \R\uHom_{S^{\rm top}}(\sff,\sg)).
$$
Writing $W=\colim_i\Z[T_i]$, where $T_i$'s are extremally disconnected, we see that we may assume that $W=\Z[T]$, for an extremally disconnected $T$. But then we have 
$$
 \R\Hom(\Z[T], \R\uHom_{S^{\rm top}}(\sff,\sg))\simeq  \R\uHom_{S^{\rm top}}(\sff,\sg)(T)\simeq \R\Hom_{S^{\rm top}}(\sff\otimes^{\LL}\Z[T],\sg),
$$
as wanted.
\end{proof}
For internal $\Hom$, we set
  $$
  \R\Hhom_{S^{\rm top}}(\sff,\sg):=\{Y\mapsto\R\uHom_{S^{\rm top}}(\sff\otimes^{\LL}\Z[h^{\rm top}_Y],\sg)\}.
  $$
It is a right adjoint to the tensor product $\sff\otimes^{\LL}\sg$ (this is easy to check by looking at the colimit presentation of presheaves via the standard generators).

If we replace the graph $\Z[\uHom_S(-,-)]$ with the graph $\Z[\Hom_S(-,-)]$, we get the  category of topological presheaves ${\rm PSh}(S,\sd({\rm CondAb}))_0$. In plain language, a topological  presheaf is  a functor $\sff : {\rm sPerf}_S^{\delta, \rm op}\to \sd({\rm CondAb})$  together with  morphisms in $\sd({\rm Ab})$

\begin{equation}\label{berlin2}
\Z[\Hom_{S}(X, X^{\prime})] \to \R\Hom(\sff(X^{\prime}), \sff(X)),
\end{equation}
which are  coherently compatible with the composition in $\se$.
We have the forgetful functor
\begin{equation}\label{zmecz1}
(-)^{\rm cl} :\quad \underline{ {\rm PSh}}(S,\sd({\rm CondAb}))_0 \to {\rm PSh}(S,\sd({\rm CondAb}))_0
\end{equation}
given by evaluating the structure maps \eqref{berlin1} on $*$  to yield the structure maps \eqref{berlin2}. The category ${\rm PSh}(S,\sd({\rm CondAb}))_0$  is generated under colimits and tensoring by the presheaves $\Z[h^{\delta}_X]$, $X\in \sx$; these generators are compact. Also, as above, we see that the category 
${\rm PSh}(S,\sd({\rm CondAb}))_0$ is naturally  $\sd({\rm CondAb})$-enriched and equipped with internal $\Hom$'s. We will denote the enriched version by ${\rm PSh}(S,\sd({\rm CondAb}))$; the morphism \eqref{zmecz1} lifts to the enriched categories.

\vskip2mm
   ($\bullet$) {\em $\underline{\Q}_p$-modules.}  We denote by\footnote{We do apologize for the unuusual notation.}
\begin{equation}\label{enr-kol2D}
\underline{\rm PSh}(S,\sd({\rm Mod}^{\rm cond}_{\underline{\Q}_p}))_0:={\rm Mod}_{\underline{\Q}_p}(\underline{\rm PSh}(S,\sd({\rm CondAb}))_0)
\end{equation}
 the category of  $\underline{\Q}_p$-modules in the category $\underline{\rm PSh}(S,\sd({\rm CondAb}))_0$.
 The underlying categories are built from presheaves $\sff$ equipped with  actions
 $
 \underline{\Q}_p\otimes \sff\to \sff
 $
 that are coherently compatible with the structure maps.   We will denote by  $\R\Hom_{S^{\rm top},\Q_p}(-,-)$ the $\Hom$-object.  The category $\underline{{\rm PSh}}(S, \sd({\rm Mod}^{\rm cond}_{\underline{\Q}_p}))_0$  is generated under colimits and tensors (by elements from $\sd({\rm Mod}^{\rm cond}_{\Q_p(S)})$) by presheaves 
$
\{\underline{\Q}_p[h^{\rm top}_X]\},
$
for  $ X\in \sx$.

 The category $\underline{\rm PSh}(S,\sd({\rm Mod}^{\rm cond}_{\underline{\Q}_p}))_0$  is canonically enriched in $\sd({\rm Mod}^{\rm cond}_{\Q_p(S)})$ yielding the $\sd({\rm Mod}^{\rm Cond}_{\Q_p(S)})$-category 
 $\underline{\rm PSh}(S,\sd({\rm Mod}^{\rm cond}_{\underline{\Q}_p}))$.  The $\Hom$-object $\R\uHom_{S^{\rm top},\Q_p}(\sff,\sg)$ is given by  $$\R\uHom_{S^{\rm top},\Q_p}(\sff,\sg)(T):=\R\Hom_{S^{\rm top},\Q_p}(\sff\otimes_{\underline{\Q}_p}\underline{\Q}_p[T],\sg).
 $$ 
  \begin{remark}
(1) All of the above has a solid version: we get the $\sd({\rm Solid})$-category $\underline{\rm PSh}(S,\sd({\rm Solid}))$ and the $\sd(\underline{\Q}_{p}(S)_{\Box})$-category $\underline{\rm PSh}(S,\sd(\underline{\Q}_{p,{\Box}}))$.

 (2) An argument analogous to the one used in the proof of Lemma \ref{denver1} shows that $\underline{\sd}(S,{\rm CondAb})$ has a canonical structure of a $\sd({\rm CondAb})$-enriched category. Similarly, for the related categories.
\end{remark}
\subsubsection{Example: Derived rigid topologically enriched  presheaves}  Let $S\in {\rm sPerf}_C$ and let $\sff\in \underline{{\rm PSh}}(S,{\sd}({\rm CondAb}))$. Proceeding as in the proof of Lemma \ref{goraco1} we can functorially associate to $\sff$ 
structure maps in ${\sd}({\rm CondAb})$:
\begin{equation}\label{goraco2}
\sff^{\prime}_{Y,T}:\quad \sff(Y\times T)\to \R[\Z[T],\sff(Y)],
\end{equation}
for all $Y\in {\rm sPerf}_S$ and $T\in {\rm ProFin}$. Here we set $\R[\Z[T],\sff(Y)]:=\R\uHom(\Z[T],\sff(Y))$. 
\begin{definition}
A derived topologically enriched presheaf $\sff\in  \underline{{\rm PSh}}(S,{\sd}({\rm CondAb}))$ is called {\em rigid} if  the structure maps from \eqref{goraco2} 
$$
\underline{\sff}^{\prime}_{Y,T}:\quad  \sff(Y\times T)\to  \R[\Z[T], \sff(Y)]
$$
are quasi-isomorphisms in ${\sd}({\rm CondAb})$, for all $Y\in {\rm sPerf}_S$, $ T\in {\rm ProFin}$. This cuts out a full sub-$\infty$-category $\underline{{\rm PSh}}^{\rm rig}(S,{\sd}({\rm CondAb}))\subset \underline{{\rm PSh}}(S,{\sd}({\rm CondAb}))$.

   Similarly  with  ${\rm CondAb}$ replaced with ${\rm Solid}$.
\end{definition}

 The topologically enriched presheaves that come from $S_{\proeet}$ are canonically derived riigid:
 \begin{lemma} \label{enriched1c} Let $\sff \in {\rm Sh}(S_{\proeet},{\rm Ab})$. The topologically enriched presheaf $\pi_*\sff\in \underline{{\rm PSh}}(S,{\rm CondAb})$ is derived rigid.
 \end{lemma}
 \begin{proof} By Lemma \ref{enriched1a} the topologically enriched presheaf $\pi_*\sff$ is rigid. Hence it remains to show that $\R^i\uHom(\Z[T],\sff(Y))=0$, for $i>0$. This is clear if $T$ is extremally disconnected (since then $\Z[T]$ is projective) and the general case follows
 from pro-\'etale hyperdescent for the sheaf $\sff$. 
 \end{proof} 
  Lemma \ref{enriched1c} combined with Example \ref{ex1} yield  that the topologically enriched presheaves $\Q_p, {\mathbb G}_a\in \underline{{\rm PSh}}(S,{\rm CondAb})$ are derived rigid  and so are the period presheaves ${\mathbb B}^+_{\dr}/t^i$ and ${\mathbb B}_{\crr}^{+,\phi=p^i}$, $i\geq 0$, as well as the Yoneda presheaves $\Z[h^{\rm top}_Y], Y\in {\rm sPerf}_C$.

\subsubsection{$t$-structure} We start the discussion of $t$-structures with the case of topological presheaves. 
We induce a $t$-structure on ${\rm PSh}(S,\sd({\rm CondAb}))_0$ from the one on $\sd({\rm CondAb})$.  We have the equivalence of abelian categories
$$\iota: \quad {\rm PSh}(S,{\rm CondAb})_0\stackrel{\sim}{\to}{{\rm PSh}}(S,\sd({\rm CondAb}))^{\heartsuit}_0.
$$
\begin{lemma}\label{heart1}The canonical extension of the map $\iota$
$$
\iota_{\sd}:\quad \sd^{-}(S,{\rm CondAb})_0\to {{\rm PSh}}(S,\sd^{-}({\rm CondAb}))_0.
$$
is an equivalence. It upgrades canonically to an equivalence of $\sd^{-}({\rm CondAb})$-categories
$$
\iota_{\sd}:\quad {\sd}^{-}(S,{\rm CondAb})\to {{\rm PSh}}(S,\sd^{-}({\rm CondAb})),
$$
 which is moreover compatible with the closed symmetric monoidal structures.
\end{lemma}
\begin{proof}For the first claim, by \cite[Th. 1.3.3.7]{HA}, it suffices to show that $H^i\R\Hom_{S^{\rm top}}(\sff_1,\sff_2)=0$, for  $\sff_1, \sff_2\in  {{\rm PSh}}(S,{\rm CondAb})_0$, $\sff_1$ projective, and $i>0$. We may assume that $\sff_1=\Z[h^{\delta}_{X}]\otimes\Z[T]$, for $X\in \sx$ and $T$ extremally disconnected.  We compute
\begin{align*}
\R\Hom_{S^{\rm top}}(\Z[h^{\delta}_{X}]\otimes\Z[T],\sff_2) & \simeq \R\Hom_{S^{\rm top}}(\Z[h^{\delta}_{X}]\otimes^{\LL}\Z[T],\sff_2)
  \simeq \R\Hom_{S^{\rm top}}(\Z[h^{\delta}_{X}],[\Z[T],\sff_2])\\ & \simeq [\Z[T],\sff_2(X)]\simeq \sff_2(X)(T).
\end{align*}
We used here that $\Z[T]$ is flat. The third quasi-isomorphism follows from the enriched  Yoneda Lemma (see \ref{enr-kol}) and the last one from the fact that $\Z[T]$ is projective. This implies the vanishing we wanted. 

  The second claim of the lemma follows easily from the fact that the map $\iota_{\sd}$ is $\sd^{-}({\rm CondAb})$-linear, is compatible with  the graphs of enrichments,  and is symmetric
  monoidal. The latter compatibility is clear on the level of hearts and, in general, follows from the fact that the map $\iota_{\sd}$ preserves the (flat) generators 
  $\Z[h^{\delta}_X]\otimes\Z[T]$.
\end{proof}

  We pass now to topologically enriched presheaves. We induce a $t$-structure on $\underline{\rm PSh}(S,\sd({\rm CondAb}))_0$ from the one on ${\rm PSh}(S,\sd({\rm CondAb}))_0$ (hence from $\sd({\rm CondAb})$).  The fact that the classical truncation functors yield actually a $t$-structure could be checked easily using the fact that the graph $\Z[\uHom(-,-)]$ is flat.   We have the equivalence of abelian categories
$$\iota: \quad \underline{\rm PSh}(S,{\rm CondAb})_0\stackrel{\sim}{\to}{{\rm PSh}}(S,\sd({\rm CondAb}))^{\heartsuit}_0.
$$
\begin{lemma}\label{heart2}The canonical extension of the map $\iota$
$$
\iota_{\sd}:\quad \underline{\sd}^{-}(S,{\rm CondAb})_0\to {{\rm PSh}}(S,\sd^{-}({\rm CondAb}))_0.
$$
is an equivalence. It upgrades canonically to an equivalence of $\sd^{-}({\rm CondAb})$-categories
$$
\iota_{\sd}:\quad \underline{\sd}^{-}(S,{\rm CondAb})\to {{\rm PSh}}(S,\sd^{-}({\rm CondAb})),
$$
 which is moreover compatible with the closed symmetric monoidal structures.
\end{lemma}
\begin{proof}We can argue as in the proof of Lemma \ref{heart1} using the generators $\Z[h^{\rm top}_X]\otimes\Z[T]$. 
\end{proof}
\begin{remark}
(1) All of the above has a ${\rm Solid}$ and $\underline{\Q}_p$ versions (with analogous proofs). 

 (2) In what follows we will often identify the categories from Lemma \ref{heart1}. 

\end{remark}
 \section{Fully-faithfulness results}\label{lato25S}
Topological Vector Spaces  are closely related to Vector Spaces as well as perfect complexes on the Fargues-Fontaine curve. In this section we prove two relevant fully-faithfulness results. 
\subsection{Vector Spaces  and Topological Vector Spaces} The algebraic  pro-\'etale and the  topological  presheaves are  closely related assuming that we enrich the latter. Let $S\in {\rm sPerf}_C$. 
\begin{theorem}{\rm(Enriched fully-faithfulness)} \label{duck1}  Let $\sff\in \sd^b(S_{\proeet},{\rm Ab})$ be such that $\R\pi_*\sff\in \underline{\sd}^b(S,{\rm CondAb})_0$ and 
let $\sg\in \sd^+(S_{\proeet},{\rm Ab})$.  The canonical morphism in $\underline{\sd}(S,{\rm CondAb})$
\begin{equation}\label{faith10-1}
\R\pi_*\R\Hhom_{S}(\sff,\sg)\to \R\Hhom_{S^{\rm top}}(\R\pi_*\sff,\R\pi_*\sg)
\end{equation}
is a quasi-isomorphism.  We  have  analogous claims for  $\underline{\Q}_p$-sheaves.
\end{theorem}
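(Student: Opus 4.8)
The plan is to reduce \eqref{faith10-1} to the case $\sff=\Z[h_Z]$ for a representable $Z\in{\rm sPerf}_S$ and then to match the two sides using the ordinary Yoneda Lemma on $S_{\proeet}$, the enriched Yoneda Lemma on $S^{\rm top}$, and the formula $\R\Gamma(Y^{\rm top}_{\proeet},\R\pi_*(-))(T)\simeq\R\Gamma_{\proeet}(Y\times T,-)$ from the Remark in Section~\ref{cond-kol1}.

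First I would reduce to evaluation. Since the generators $\Z[h^{\delta}_Y]\otimes\Z[T]$ — with $Y$ a $w$-contractible object of ${\rm sPerf}_S$ and $T$ extremally disconnected — detect quasi-isomorphisms in $\underline{\sd}(S,{\rm CondAb})$, and since such a pair $(Y,T)$ is covering-acyclic in $S^{\rm top}_{\proeet}$ (a pro-\'etale cover of $Y$ splits, a surjection onto $T$ splits), it suffices to check that \eqref{faith10-1} is a quasi-isomorphism after evaluation at $(Y,T)$. Unwinding the internal $\R\Hhom$ of $\underline{\sd}(S,{\rm CondAb})$ through the tensor--hom adjunction of Lemma~\ref{adjoint1} and $h^{\rm top}_Y\simeq\pi_*h_Y$, the target of \eqref{faith10-1} at $(Y,T)$ becomes $\R\Hom_{S^{\rm top}}(\Z[h^{\rm top}_Y]\otimes\Z[T]\otimes^{\LL}\R\pi_*\sff,\R\pi_*\sg)$; by the Remark of Section~\ref{cond-kol1} the source becomes $\R\Gamma_{\proeet}(Y\times T,\R\Hhom_S(\sff,\sg))\simeq\R\Hom_{S_{\proeet}}(\Z[h_{Y\times T}]\otimes^{\LL}\sff,\sg)$, and I would then check that this identification is compatible with the comparison morphism.

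Next I would reduce $\sff$ to a representable. For fixed $\sg$, both sides of \eqref{faith10-1} are exact contravariant triangulated functors of $\sff$ on the triangulated subcategory of $\sd^b(S_{\proeet},{\rm Ab})$ cut out by the condition $\R\pi_*\sff\in\underline{\sd}^b$. Using that the $\Z[h_Z]$, $Z\in{\rm sPerf}_S$, are compact projective generators, I would resolve $\sff$ by a complex $P^{\bullet}\to\sff$ of sums of such, bounded above by the top cohomological degree of $\sff$, and deduce the statement for $\sff$ from the statement for the $\Z[h_Z]$ and their shifts. The subtle point — and the main obstacle — is to control the passage to the limit: the resolution $P^{\bullet}$ is infinite to the left, so one must show that the $\R\lim$ over its brutal truncations still converges after applying $\R\Hhom_S(-,\sg)$ and $\R\Hhom_{S^{\rm top}}(\R\pi_*(-),\R\pi_*\sg)$. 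This is precisely where $\sg\in\sd^+$ and, crucially, the hypothesis $\R\pi_*\sff\in\underline{\sd}^b$ enter, together with an analysis of the behaviour of $\R\pi_*$ on the generators $\Z[h_Z]$.

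Finally, for $\sff=\Z[h_Z]$ I would use $\Z[h_{Y\times T}]\otimes^{\LL}\Z[h_Z]\simeq\Z[h_{(Y\times T)\times_S Z}]$ and the corresponding identity $h^{\rm top}_Y\otimes\underline{T}\otimes\pi_*h_Z\simeq\pi_*h_{(Y\times T)\times_S Z}$ on the topological side. The ordinary Yoneda Lemma then identifies the source from the first step with $\R\Gamma_{\proeet}\big((Y\times T)\times_S Z,\sg\big)$, and the enriched Yoneda Lemma together with the Remark of Section~\ref{cond-kol1} identifies the target with the same object, compatibly with the comparison map. This settles the ${\rm Ab}$-case; the $\underline{\Q}_p$-version is obtained by the identical argument with $\Z[-]$ replaced by $\underline{\Q}_p[-]$ (still compact projective generators), the relevant $\pi_*$-acyclicities being supplied by Lemma~\ref{acyclic1}.
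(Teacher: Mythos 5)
There is a genuine gap, and it sits exactly where you flag it: the reduction of $\sff$ to representables. Your plan resolves the \emph{sheaf} $\sff$ by a left-unbounded complex $P^{\bullet}$ of sums of $\Z[h_Z]$ and then needs the $\R\lim$ over brutal truncations to converge after applying $\R\Hhom_{S^{\rm top}}(\R\pi_*(-),\R\pi_*\sg)$; you assert that $\sg\in\sd^+$ and $\R\pi_*\sff\in\underline{\sd}^b$ will supply this, but neither hypothesis transfers to the terms of the resolution. The boundedness of $\R\pi_*\sff$ says nothing about $\R\pi_*P^{-n}$, and $\R\pi_*\Z[h_Z]$ is not computed anywhere (nor is it $\Z[h^{\rm top}_Z]$: for $Z$ merely strictly totally disconnected, pro-\'etale covers of $Z$ need not split, so $\Z[h_Z]$ is neither projective nor $\pi_*$-acyclic as a sheaf, and $H^i_{\proeet}(Y\times T,\Z[h_Z])$ is uncontrolled). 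For the same reason your final step, which needs to identify $\R\Hhom_{S^{\rm top}}(\R\pi_*\Z[h_Z],\R\pi_*\sg)$ via the enriched Yoneda Lemma, tacitly replaces $\R\pi_*\Z[h_Z]$ by $\pi_*\Z[h_Z]\simeq\Z[h^{\rm top}_Z]$ without justification; also the case $\sff=\Z[h_Z]$ is not literally an instance of the theorem, since the hypothesis $\R\pi_*\Z[h_Z]\in\underline{\sd}^b$ is unverified. So the uniform cohomological bounds needed for the limit argument are precisely the missing content, not a routine consequence of the stated hypotheses.

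The paper's proof is organized so that this issue never arises. The d\'evissage goes through injective resolutions rather than free ones: $\sg$ is reduced to a single injective sheaf, and $\sff$ is replaced by a truncated injective resolution whose terms are $\pi_*$-acyclic (this is where $\R\pi_*\sff\in\underline{\sd}^b$ is used), so one only ever applies $\pi_*$, never $\R\pi_*$, to the pieces. The decisive structural step is then a passage from sheaves on $S_{\proeet}$ to \emph{presheaves} on ${\rm sPerf}_S$ (using $\iota^*\iota_*\simeq{\rm Id}$ and injectivity of $\sg$ to compare the internal $\R\Hhom$'s), after which $\pi^{\rm psh}_*$ is exact, the presheaves $\Z[h_Z]$ are honestly projective, and $\sff$ is written as a \emph{colimit} $\colim_i\Z[h_{Z_i}]$ rather than resolved, so $\Hhom$ out of it is a limit of representable cases handled by the classical and enriched Yoneda Lemmas; the vanishing of higher $\R\Hhom_{S^{\rm top}}$ is then obtained by evaluating on extremally disconnected profinite sets and using injectivity of $\sg$ to collapse the remaining $\R\lim$. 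Your Yoneda-matching endgame is the same idea as the paper's, but without the sheaf-to-presheaf reduction and the injective d\'evissage the limit problem you identify is real and your argument does not close it.
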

\begin{remark}\label{maybe10} The map \eqref{faith10-1}  is    constructed in the usual way: 
By adjointness, it suffices to construct a map
$$
\R\pi_*\R\Hhom_{S}(\sff,\sg)\otimes^{\LL}\R\pi_*\sff\to \R\pi_*\sg.
$$
For this, we will use the composition 
$$
\R\pi_*\R\Hhom_{S}(\sff,\sg)\otimes^{\LL}\R\pi_*\sff\to \R\pi_*(\R\Hhom_{S}(\sff,\sg)\otimes^{\LL}\sff)\to\R\pi_*\sg,
$$
where  the second arrow is $\R\pi_*$ applied to the canonical map
$$
\R\Hhom_{S}(\sff,\sg)\otimes^{\LL}\sff\to \sg
$$
and the first arrow is the relative cup product defined in the following way. For  $\sg_1,\sg_2\in \sd(S_{\proeet}, {\rm Ab})$, 
 the relative cup product map in $\underline{\sd}(S, {\rm CondAb})$
$$
\R\pi_*\sg_1\otimes^{\LL}\R\pi_*\sg_2\to \R\pi_*(\sg_1\otimes^{\LL}\sg_2)
$$
is induced by the functorial maps  in $\sd({\rm Ab})$, for $Y\in {\rm sPerf}_S$, profinite set $T$: 
\begin{equation}\label{maybe40}
\R\Gamma_{\proeet}(Y\times T,\sg_1)\otimes^{\LL}\R\Gamma_{\proeet}(Y\times T,\sg_2)\to \R\Gamma_{\proeet}(Y\times T,\sg_1\otimes^{\LL} \sg_2).
\end{equation}
These maps are compatible with the enriched structure maps as the latter are just induced by the identity maps (see the proof of Lemma \ref{enriched1a}).
\end{remark}

\subsubsection{Proof of Theorem \ref{duck1}}
We start with  a reduction. For $\sff$: we may assume that $\sff$ is represented by a bounded complex, then we can take an injective resolution and truncate it to replace $\sff$ with a bounded complex $\sff^{\prime}$ such that $\R\pi_*\sff^{\prime}\simeq \pi_*\sff^{\prime}$, and, finally, do d\'evissage. For $\sg$: we replace $\sg$ by a bounded below complex of injectives, then by a limit argument and a d\'evissage pass to a single injective. We end up with needing  to show that 
$$
\R\pi_*\R\Hhom_{S}(\sff,\sg)\stackrel{\sim}{\to }\R\Hhom_{S^{\rm top}}(\pi_*\sff,\pi_*\sg)
$$
for $\sff, \sg\in {\rm Sh}(S_{\proeet}, {\rm Ab})$ and $\sg$ injective.\vskip2mm
{\bf Step 1.} {\em Passage to presheaves.} We will factor the functor  $\pi_*$ in the following way:
$$
\pi_*: {\rm Sh}(S_{\proeet},{\rm Ab})\lomapr{\epsilon_*}{\rm Sh}(S^{\rm std}_{\proeet},{\rm Ab})
\lomapr{\iota_*}{\rm PSh}({\rm sPerf}_S,{\rm Ab})\lomapr{\pi_*^{\rm psh}}\underline{{\rm PSh}}(S,{\rm CondAb})_0.
$$
Here, the site $S^{\rm std}_{\proeet}$ is the category ${\rm sPerf}_S$ equipped with the pro-\'etale topology.  The functor $\epsilon_*$ is an equivalence of categories. The functor $\pi^{\rm psh}_*$ involves sheafification in the $T$-direction\footnote{This amounts to forcing the additivity property for presheaves on extremally totally disconnected profinite sets and it is easy to see that this sheafification process preserves enrichment.}; it is exact.  It is well-defined because, for $Y\in {\rm sPerf}_S$ and a profinite set $T$, the affinoid perfectoid  $Y\times T$ is strictly totally disconnected.   
 For $\sff, \sg\in {\rm Sh}(S_{\proeet},{\rm Ab})$, we have
$$
\R\iota_*\R\Hhom_{S}(\sff,\sg)\stackrel{\sim}{\to} \R\Hhom_{S}(\R\iota_*\sff,\R\iota_*\sg).
$$
This follows from the adjunction
$$
\R\iota_*\R\Hhom_{S}(\iota^*\R\iota_*\sff,\sg)\stackrel{\sim}{\to} \R\Hhom_{S}(\R\iota_*\sff,\R\iota_*\sg)
$$
and the fact that $\iota^*\R\iota_*\sff\simeq \sff$. If moreover $\sg$ is  injective, since $\iota^*\iota_*\sff\simeq \sff$, similar adjunction yields
$$
\R\iota_*\R\Hhom_{S}(\sff,\sg)\stackrel{\sim}{\to} \R\Hhom_{S}(\iota_*\sff,\iota_*\sg).
$$
\vskip2mm

  {\bf Step 2.} {\em Enriched  presheaves fully-faithfulness.}  Hence it suffices to show that the natural map
\begin{equation*}\label{cieplo1}
\R\pi_*^{\rm psh}\R\Hhom_{S}(\sff,\sg)\to \R\Hhom_{S^{\rm top}}(\pi^{\rm psh}_*\sff,\pi^{\rm psh}_*\sg)
\end{equation*}
is a quasi-isomorphism for $\sff, \sg\in {\rm PSh}({\rm sPerf}_S,{\rm Ab})$, with $\sg$  an injective sheaf. 
     Or  that:
\begin{align}\label{zimno11}
 &  \pi^{\rm psh}_{*}\Hhom_{S}(\sff,\sg)\stackrel{\sim}{\to} \Hhom_{S^{\rm top}}(\pi^{\rm psh}_{*}\sff,\pi^{\rm psh}_{*}\sg),\\
 &  \R^i\Hhom_{S^{\rm top}}(\pi^{\rm psh}_{*}\sff,\pi^{\rm psh}_{*}\sg)=0,\quad i>0.\notag
\end{align}
We write  $\sff =\colim_i\Z[h_{Z_i}]$, for $Z_i\in {\rm sPerf}_S$. The presheaves $\Z[h_{Z_i}]$ are projective. To prove the first isomorphism above,  since  the functor  $\pi^{\rm psh}_{*}$ commutes with limits, 
        it suffices  to show that, for $Z\in {\rm sPerf}_S$, we have 
  $$
  \pi^{\rm psh}_{*}\Hhom_{S}(\Z[h_Z],\sg)\stackrel{\sim}{\to} \Hhom_{S^{\rm top}}(\Z[h^{\rm top}_Z],\pi^{\rm psh}_{*}\sg).
  $$
   Evaluating both sides on $(Y,T)$, we see that we need to show that
  $$
    \Hom_{S}(\Z[h_{Z}]\otimes \Z[ h_{Y\times T}],\sg)\stackrel{\sim}{\to} \Hom_{S^{\rm top}}(\Z[h^{\rm top}_Z]\otimes\Z[h^{\rm top}_Y]\otimes\Z[T],\pi^{\rm psh}_{*}\sg).
  $$
  Or, rewriting, that 
  $$
      \Hom_{S}(\Z[h_{Z\times_S Y}]\otimes \Z[ h_{T}],\sg)\stackrel{\sim}{\to} \Hom_{S^{\rm top}}(\Z[h^{\rm top}_{Z\times_S Y}]\otimes\Z[T],\pi^{\rm psh}_{*}\sg).
  $$
 We note that the above map is  induced by the canonical isomorphism $\Z[h^{\rm top}_{Y\times_SZ}]\stackrel{\sim}{\to} \pi^{\rm psh}_{*}\Z[h_{Y\times_SZ}]$ and the morphism 
 $\Z[T]\to \pi^{\rm psh}_{*}\Z[h_T]$, which  factor through the canonical isomorphism $\Z[h^{\rm top}_{T}]\stackrel{\sim}{\to} \pi^{\rm psh}_{*}\Z[h_{T}]$ (and corresponds to the identity
in $\Hom(T,T)$). 

       Writing $\Z[h_{Z\times _SY}]=\colim_i\Z[h_{Z_i}]$, for $Z_i\in {\rm sPerf}_S$, we reduce to showing that, for $Z\in {\rm sPerf}_S$, we have
\begin{equation}\label{wiesia1}
      \Hom_{S}(\Z[h_{Z}]\otimes \Z[ h_{T}],\sg)\stackrel{\sim}{\to} \Hom_{S^{\rm top}}(\Z[h^{\rm top}_{Z}]\otimes\Z[T],\pi^{\rm psh}_{*}\sg)
  \end{equation}
  But, by the classical and enriched Yoneda Lemmas, respectively, we have
    \begin{align*}
   & \Hom_{S}(\Z[h_{Z}]\otimes \Z[ h_{ T}],\sg)  \simeq   \Hom_{S}(\Z[h_{Z\times T}],\sg)\simeq \sg(Z\times T),\\
   & \Hom_{S^{\rm top}}(\Z[h^{\rm top}_Z]\otimes\Z[T],\pi^{\rm psh}_{*}\sg)\simeq \uHom_{S^{\rm top}}(\Z[h^{\rm top}_Z],\pi^{\rm psh}_{*}\sg)(T)\simeq
    \pi^{\rm psh}_{*}\sg(Z)(T)\simeq \sg(Z\times T).
  \end{align*}
This yields the isomorphism \eqref{wiesia1}.  
  
       To finish the proof of our theorem we need to show that 
   $$
  \R \Hhom_{S^{\rm top}}(\pi^{\rm psh}_{*}\sff,\pi^{\rm psh}_{*}\sg)
   $$
   is concentrated in degree $0$.  Or that so are its values on $Y$:
   $$
     \R \uHom_{S^{\rm top}}(\pi^{\rm psh}_{*}\sff\otimes^{\LL}\Z[h^{\rm top}_Y],\pi^{\rm psh}_{*}\sg).
   $$ 
   For that, writing $\sff=\colim_i\Z[h_{Z_i}], $ for $Z_i\in {\rm sPerf}_S$, and then each $\Z[h_{Z_i\times_S Y}]$ as a colimit of $\Z[h_Z],$ for $ Z\in {\rm sPerf}_S$, we compute in $\sd({\rm CondAb})$:
   \begin{align*}
   \R \uHom_{S^{\rm top}}(\pi^{\rm psh}_{*}\sff \otimes^{\LL}\Z[h^{\rm top}_Y],\pi_{*}\sg) & \simeq \R\lim_i\R \uHom_{S^{\rm top}}(\Z[h^{\rm top}_{Z_i\times_S Y}],\pi^{\rm psh}_{*}\sg)\\
    & \simeq \R\lim_j\R \uHom_{S^{\rm top}}(\Z[h^{\rm top}_{Z_j}],\pi^{\rm psh}_{*}\sg)\simeq \R\lim_j\pi^{\rm psh}_{*}\sg(Z_j).
    \end{align*}
Here the last quasi-isomorphism follows from the enriched Yoneda Lemma.    Evaluating now the above on extremally  disconnected $T$, we get 
    \begin{align*}
    \R\lim_j\pi^{\rm psh}_{*}\sg(Z_j)(T) & \simeq  \R\lim_j\sg(Z_j\times T)  \simeq \R\lim_j\R \Hom_S(\Z[h_{Z_j\times T}],\sg) \\
     & \simeq \R \Hom_S(\colim_i\Z[h_{Z_j\times T}],\sg)\simeq \Hom_S(\colim_i\Z[h_{Z_j\times T}],\sg).
   \end{align*}
   Here we used that  $\sg$ is injective. Since,   clearly, $\Hom_S(\colim_i\Z[h_{Z_j\times T}],\sg)$ 
is concentrated in degree $0$, we are done. 
  
    The arguments for $\Q_p$-sheaves are analogous using the fact that $\pi^{\rm psh}_*\underline{\Q}_p[h_Y]^{\rm psh}\simeq \underline{\Q}_p[h^{\rm top}_Y]^{\rm psh}$.

\subsubsection{Applications of Theorem \ref{duck1}}
  We list now two applications of Theorem \ref{duck1}. 
\begin{corollary}\label{duck2}
Let $\sff,\sg\in \{{\mathbb G}_a, \underline{\Q}_p\}$.  Then we have a natural quasi-isomorphism
$$
\R\pi_*\R\Hhom_{S,\Q_p}(\sff,\sg)\stackrel{\sim}{\to}\R\Hhom_{S^{\rm top},\Q_p}(\sff,\sg).
$$
\end{corollary}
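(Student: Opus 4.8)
The plan is to deduce this directly from the $\underline{\Q}_p$-version of Theorem \ref{duck1}; the only work is to verify its hypotheses for the sheaves ${\mathbb G}_a$ and $\underline{\Q}_p$ on $S_{\proeet}$, and then to identify the outputs of $\R\pi_*$ with the topological sheaves appearing on the right-hand side of the statement.

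First I would record that, for $\sff\in\{{\mathbb G}_a,\underline{\Q}_p\}$ viewed as a sheaf on $S_{\proeet}$, Lemma \ref{acyclic1} gives $\R^i\pi_*\sff=0$ for $i>0$, so that $\R\pi_*\sff\simeq\pi_*\sff$ is concentrated in degree $0$. In particular $\sff$ lies in $\sd^b(S_{\proeet},\Q_p)$ and $\R\pi_*\sff$ lies in $\underline{\sd}^b(S,\Q_p)_0$, which is the boundedness hypothesis of Theorem \ref{duck1} on the first argument; since $\sg\in\{{\mathbb G}_a,\underline{\Q}_p\}$ is a sheaf it lies in $\sd^+(S_{\proeet},\Q_p)$, as required of the second argument. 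Applying Theorem \ref{duck1} then yields a quasi-isomorphism in $\underline{\sd}(S,\Q_p)$
$$
\R\pi_*\R\Hhom_{S,\Q_p}(\sff,\sg)\stackrel{\sim}{\to}\R\Hhom_{S^{\rm top},\Q_p}(\R\pi_*\sff,\R\pi_*\sg).
$$
It then remains to identify the right-hand side with $\R\Hhom_{S^{\rm top},\Q_p}(\sff,\sg)$, where now $\sff,\sg$ denote the corresponding topological sheaves of Example \ref{ex1}. For this I would invoke the identifications $\pi_*\underline{\Q}_p\simeq\underline{\Q}_p$ and $\pi_*{\mathbb G}_a\simeq{\mathbb G}^{\rm top}_a$ established in Example \ref{ex1}, which, combined again with the vanishing of Lemma \ref{acyclic1}, upgrade to quasi-isomorphisms $\R\pi_*\underline{\Q}_p\simeq\underline{\Q}_p$ and $\R\pi_*{\mathbb G}_a\simeq{\mathbb G}^{\rm top}_a$ in $\underline{\sd}(S,\Q_p)$; substituting these into the displayed quasi-isomorphism gives the statement, after the usual abuse of dropping the superscript on ${\mathbb G}^{\rm top}_a$.

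There is essentially no obstacle here, since the content is entirely in Theorem \ref{duck1} and Lemma \ref{acyclic1}, which we may assume. The one point deserving a line of care is that the map exhibited above is the canonical one: one should check that the natural map $\R\pi_*\R\Hhom_{S,\Q_p}(\sff,\sg)\to\R\Hhom_{S^{\rm top},\Q_p}(\sff,\sg)$ intended in the statement agrees, under the identifications just made, with the map \eqref{faith10-1} of Theorem \ref{duck1}. This is immediate from the construction of the latter recalled in Remark \ref{maybe10}, since both are induced by the relative cup product together with the evaluation pairing $\R\Hhom_{S,\Q_p}(\sff,\sg)\otimes^{\LL}_{\underline{\Q}_p}\sff\to\sg$.
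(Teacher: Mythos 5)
Your proof is correct and follows the paper's own argument: Corollary \ref{duck2} is deduced directly from Theorem \ref{duck1} using the $\pi_*$-acyclicity of ${\mathbb G}_a$ and $\underline{\Q}_p$ from Lemma \ref{acyclic1}, together with the identifications $\pi_*\underline{\Q}_p\simeq\underline{\Q}_p$ and $\pi_*{\mathbb G}_a\simeq{\mathbb G}_a^{\rm top}$ of Example \ref{ex1}. The extra care you take in spelling out these identifications and the compatibility of the canonical maps is only a more detailed rendering of the same one-line argument.
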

\begin{proof}
This follows immediately from Theorem \ref{duck1} because both $\sff$ and $\sg$ are $\pi_*$-acyclic. 
\end{proof}
Let $K$ be a complete discrete valuation field of mixed characteristic $(0,p)$ and a perfect residue field. Let $X$ be a smooth partially proper rigid analytic variety over $K$. Let ${\mathbb R}^{\rm alg}_{\proeet,*}(X_C,\Q_p)$, $*\in\{\emptyset, c\}$,  be the  Vector Space representing pro-\'etale cohomology: 
$$
S\mapsto \R\Gamma_{\proeet,*}(X_S,\Q_p). 
$$
\begin{corollary}\label{duck3} We have a natural quasi-isomorphism
$$
\R\pi_*\R\Hhom_{S,\Q_p}({\mathbb R}^{\rm alg}_{\proeet,*}(X_C,\Q_p),\Q_p)\stackrel{\sim}{\to}\R\Hhom_{S^{\rm top},\Q_p}(\R\pi_*{\mathbb R}^{\rm alg}_{\proeet,*}(X_C,\Q_p(r)),\Q_p).
$$
\end{corollary}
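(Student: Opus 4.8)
The plan is to obtain this as a direct application of Theorem \ref{duck1}, taking $\sff := {\mathbb R}^{\rm alg}_{\proeet,*}(X_C,\Q_p(r))$ and $\sg := \underline{\Q}_p$; since $\underline{\Q}_p$ is $\pi_*$-acyclic by Lemma \ref{acyclic1}, the theorem then produces exactly the asserted quasi-isomorphism (on the left-hand side one invokes the non-canonical isomorphism $\underline{\Q}_p(r)\simeq\underline{\Q}_p$ of $\underline{\Q}_p$-sheaves over $S\in{\rm sPerf}_C$, which makes the twist harmless). So the whole content is to verify the two hypotheses of Theorem \ref{duck1} for $\sff$, namely that $\sff\in\sd^b(S_{\proeet},\Q_p)$ and that $\R\pi_*\sff\in\underline{\sd}^b(S,\Q_p)$.

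The first hypothesis is a finiteness statement: for a smooth partially proper rigid analytic variety $X$ over $K$, the geometric $p$-adic pro-\'etale cohomology $\R\Gamma_{\proeet,*}(X_S,\Q_p(r))$, $*\in\{\emptyset,c\}$, is bounded (concentrated in degrees between $0$ and $2\dim X+1$, the top degree coming from the $\R^1\lim$ present in the partially proper case), uniformly in $S$; hence the Vector Space $\sff$ lies in $\sd^b(S_{\proeet},\Q_p)$. This is part of the basic comparison and finiteness results for such cohomology recalled in the introduction (see \cite{CN4}).

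For the second hypothesis the key observation is that $\sff$ is represented by a bounded complex of $\pi_*$-acyclic sheaves. Indeed, through the syntomic description the complex $\R\Gamma_{\proeet,*}(X_-,\Q_p(r))$ is built by a finite iteration of mapping fibers out of Hyodo--Kato and de Rham data whose terms are (Fr\'echet, resp.\ compact-type, completed coefficient extensions of) $\mathbb{G}_a$ and the period sheaves $\mathbb{B}^+_{\dr}/t^i$, $\mathbb{B}^{+,\phi=p^i}_{\crr}$, together with $\underline{\Q}_p$; all of these are $\pi_*$-acyclic by Lemma \ref{acyclic1}, whose proof reduces the vanishing to Tate acyclicity for perfectoid affinoids and to the triviality of the \'etale cohomology of strictly totally disconnected spaces, and goes through unchanged after tensoring the $\mathbb{G}_a$-terms with a flat Fr\'echet or compact-type coefficient space. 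Consequently $\R\pi_*\sff$ is computed by applying $\pi_*$ termwise and is again bounded, so Theorem \ref{duck1} applies.

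I expect the main obstacle to be exactly this second point: producing a representative of $\sff$ (equivalently, a d\'evissage of each cohomology sheaf $\mathcal{H}^i(\sff)$, an effective Banach--Colmez space, into the acyclic generators of Lemma \ref{acyclic1}) entirely within the class of $\pi_*$-acyclic sheaves. This rests on the structural input from the comparison theorems of \cite{CN4} and on the mild extension of Lemma \ref{acyclic1} to $\mathbb{G}_a$ with infinite-dimensional Fr\'echet/compact-type coefficients, which follows from flatness of such coefficients together with Tate acyclicity.
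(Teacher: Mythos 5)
Your proposal matches the paper's proof: the corollary is obtained by applying Theorem \ref{duck1} with $\sg=\underline{\Q}_p$ (identified with $\R\pi_*\underline{\Q}_p$ via Lemma \ref{acyclic1}), the only substantive hypothesis being that $\R\pi_*{\mathbb R}^{\rm alg}_{\proeet,*}(X_C,\Q_p(r))$ has finite amplitude, which the paper, like you, deduces from the geometrized comparison theorems of \cite{CN4} (and \cite{AGN}). Your unwinding of the syntomic description into $\pi_*$-acyclic building blocks is simply the mechanism behind that citation, so the argument is essentially the same.
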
 
\begin{proof} Since the geometrized $p$-adic comparison theorems (see \cite{CN4}, \cite{AGN})  imply that $\R\pi_*{\mathbb R}^{\rm alg}_{\proeet,*}(X_C,\Q_p(r))$ has a finite amplitude, this follows from Theorem \ref{duck1}.
\end{proof}
 \begin{remark} (1)  Theorem \ref{duck1} stays valid (with the same proof) if we take $S\in {\rm sPerf}_C$, $\sff$ -- a bounded complex of sheaves, and replace $\R\pi_*\sff$ with $\pi_*\sff$. 
 
(2) Corollary \ref{duck2} is the reason why we work with strictly totally disconnected spaces in the definition of ${\rm TVS}$'s: we need vanishing of pro-\'etale cohomology of 
${\mathbb G}_a$ and $\underline{\Q}_p$ in nontrivial degrees\footnote{Since the pro-\'etale cohomology of ${\mathbb G}_a$ vanishes in degrees~$\geq 1$ on any perfectoid affinoid and 
the pro-\'etale cohomology of $\underline{\Q}_p$ vanishes on sympathetic perfectoids, we could have also used here sympathetic spaces.}.

(3) We note however that we can not go "higher", that is, we can not   replace strictly totally disconnected spaces $Y$ with w-contractible spaces. This is because in the proof of Theorem \ref{duck1} we need that $Y$ remains strictly totally disconnected  after base change to $T$ and an analogous  permanence property does  not hold for w-contractible $Y$.
\end{remark}

\subsection{Perfect complexes on the Fargues-Fontaine curve and Topological Vector Spaces}
Here we will prove a fully-faithfulness result between perfect complexes on the Fargues-Fontaine curve and Topological Vector Spaces. Via Theorem \ref{duck1} we will reduce it to an analogous result for Vector Spaces (proved by Ansch\"utz-Le Bras in \cite{AnLB}).
\subsubsection{Quasi-coherent sheaves on the Fargues-Fontaine curve}
Recall the definition of the relative Fargues-Fontaine curve (see \cite[Lecture 12]{SW}). Let  $S = {\rm Spa}(R,R^+)$ be an affinoid perfectoid space over the finite field ${\mathbf F}_p$.  Let
$$Y_{\FF,S} := {\rm Spa}(W(R^+),W(R^+)) \setminus V (p[p^{\flat}])
 $$
 be the relative mixed characteristic punctured unit disc. 
It is an analytic adic space over $\Q_p$. The
Frobenius on $R^+$ induces the Witt vector Frobenius and hence  a Frobenius  $\phi$ on $Y_{\FF,S}$ with  free and
totally discontinuous action. The Fargues-Fontaine curve relative to $S$
(and $\Q_p$) is defined as 
$$X_{\FF,S} := Y_{\FF,S}/\phi^{\Z}.
 $$
 
   For an interval $I = [s, r]\subset  (0,\infty)$ with rational endpoints, we have the open subset
$$Y_{\FF,S,I} := \{|\cdot|: |p|^r \leq |[p^{\flat}]| \leq  |p|^s\} \subset  Y_{\FF,S}. 
 $$
  It is a rational open subset of ${\rm Spa}(W(R^+),W(R^+))$ hence 
an affinoid space,
$$
Y_{\FF,S,I}:={\rm Spa}(\B_{S,I},\B^+_{S,I}).
$$ One can form $X_{\FF,S}$ as the quotient of $Y_{\FF,S,[1,p]}$ via the identification $\phi: Y_{\FF,S,[1,1]}\stackrel{\sim}{\to} Y_{\FF,S,[p,p]}$. 
           
Let  $Y$ be an analytic adic space over $\Q_p$. We denote by ${\rm QCoh}(Y )$ the $\infty$-category of (solid) quasi-coherent sheaves
on $Y$, and by  ${\rm Nuc}(Y )$ the full $\infty$-subcategory of solid nuclear sheaves on $Y$. For $Y={\rm Spa}(R,R^+)$ over $\Q_p$, we have ${\rm QCoh}(Y )\simeq 
\sd(R^{\rm an}_{\Box})$, where we wrote  $R^{\rm an}_{\Box}$ for the analytic ring $(R,R^+)_{\Box}$.
  See \cite{And21}  for more properties of the category ${\rm QCoh}(Y )$.
\subsubsection{Topological projection functor}
 For $S\in {\rm sPerf}_C$, we have  the functor 
\begin{align}\label{essen1a}
 & \R\overline{\tau}_*: {\rm QCoh}(X_{\FF,S^{\flat}})  \to {\sd}(S,\Q_{p,\Box}),\\
 & \sff\mapsto \{(f_Y: Y\to S)\to \R\Gamma(X_{\FF,Y^{\flat}},{\LL}f^*_Y\sff)\}.\notag
\end{align}
Note that, clearly,  we have $\R\Gamma(X_{\FF,Y^{\flat}},{\LL}f^*_Y\sff)\in \sd(\Q_{p}(Y)_{\Box})$.  
We claim that  
 formula \eqref{essen1a} actually defines a solid enriched presheaf. 
 \begin{lemma}\label{essen1b}
 The functor $\R\overline{\tau}_*$ factors canonically via the $\infty$-derived category $\underline{\sd}(S,\Q_{p,\Box})_0$. That is, we have  a functor 
\begin{align}\label{essen1}
  \R{\tau}_*: {\rm QCoh}(X_{\FF,S^{\flat}})  \to \underline{\sd}(S,\Q_{p,\Box})_0
\end{align}
that fits into a commutative diagram
$$
\xymatrix{
  {\rm QCoh}(X_{\FF,S^{\flat}})  \ar[r]^{\R{\tau}_* }\ar[rd]^{\R\overline{\tau}_*} & \underline{\sd}(S,\Q_{p,\Box})_0\ar[d]^{(-)^{\rm cl}}\\
  & {\sd}(S,\Q_{p,\Box})
}
$$
 \end{lemma}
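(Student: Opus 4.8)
The plan is to show that the assignment $\sff \mapsto \{(f_Y\colon Y\to S)\mapsto \R\Gamma(X_{\FF,Y^\flat},\LL f_Y^*\sff)\}$ carries a canonical enrichment, i.e.\ that the structure maps \eqref{alt1} exist compatibly, and then observe that forgetting the enrichment ($(-)^{\rm cl}$) recovers $\R\overline{\tau}_*$ by construction. First I would recall the alternative description of enriched presheaves from Section \ref{ias2}: to upgrade $\R\overline{\tau}_*\sff$ to an object of $\underline{\sd}(S,\Q_{p,\Box})_0$ it suffices to produce, for each $Y\in{\rm sPerf}_S$ and each profinite set $T$, a map in $\sd(\Q_p(Y)_\Box)$
$$
\R\Gamma(X_{\FF,(Y\times T)^\flat},\LL f_{Y\times T}^*\sff)\otimes^{\LL_\Box}\underline{\Q}_p(Y)(T)\longrightarrow \R\Gamma(X_{\FF,Y^\flat},\LL f_Y^*\sff),
$$
natural in $Y$, $T$ and in $\sff$, and satisfying the composition/identity coherences of \eqref{alt1b}. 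The key geometric input is that the formation of the relative Fargues--Fontaine curve commutes with the base change $Y\times T\to Y$; since $(Y\times T)^\flat = Y^\flat\times T$ and $\wh{T}$ is the affinoid perfectoid attached to the profinite set $T$, one has $X_{\FF,(Y\times T)^\flat}\simeq X_{\FF,Y^\flat}\times_{S^\flat}(Y^\flat\times T)$ — really just the pullback of $X_{\FF,Y^\flat}$ along $Y\times T\to Y$ — compatibly with the projection $\phi^{\Z}$-quotients. Concretely, on the level of the rings $\B_{Y,I}$ one has $\B_{Y\times T,I}\simeq \B_{Y,I}\wotimes_{\Q_p}\scc(T,\Q_p)$ by the same argument as in Example \ref{ex1}(2) (via \cite[Cor. 10.5.4]{PGS}).

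The construction of the structure map then proceeds as follows. By quasi-coherent base change, $\LL f_{Y\times T}^*\sff\simeq \LL g^*\LL f_Y^*\sff$ where $g\colon X_{\FF,(Y\times T)^\flat}\to X_{\FF,Y^\flat}$ is the projection; hence $\R\Gamma(X_{\FF,(Y\times T)^\flat},\LL f_{Y\times T}^*\sff)\simeq \R\Gamma(X_{\FF,Y^\flat},\LL g_*\LL g^*\LL f_Y^*\sff)$. Since $g$ is (affinoid-locally on $X_{\FF,Y^\flat}$) base change along the profinite set $T$, one has $\LL g_*\LL g^*\sg\simeq \sg\otimes^{\LL_\Box}_{\Q_p}\scc(T,\Q_p)$ for any $\sg\in{\rm QCoh}(X_{\FF,Y^\flat})$, using the projection formula together with the fact that $\scc(T,\Q_p)$ is a flat solid $\Q_p$-module (so the tensor product is already derived). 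Taking global sections gives
$$
\R\Gamma(X_{\FF,(Y\times T)^\flat},\LL f_{Y\times T}^*\sff)\simeq \R\Gamma(X_{\FF,Y^\flat},\LL f_Y^*\sff)\otimes^{\LL_\Box}_{\Q_p}\scc(T,\Q_p),
$$
and, after tensoring up to $\Q_p(Y)$ and contracting along the counit $\scc(T,\Q_p)\otimes^{\LL_\Box}(-)\to(-)$ that sends $T$ to a point (equivalently, using the natural map $\underline{\Q}_p(Y)(T)=\scc(T,\Q_p(Y))\to\Q_p(Y)$? — rather, the structure map is the one adjoint to this identification), one obtains the required map \eqref{alt1}. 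Functoriality in $Y$ and $T$ is immediate from functoriality of $\LL g_*\LL g^*$ and of the projection formula; the composition rule (for $T_1\to T$ and nested base changes) follows from the transitivity of base change for quasi-coherent sheaves, i.e.\ from $\LL g_{1*}\LL g_1^*\circ \LL g_*\LL g^*\simeq \LL(g\circ g_1)_*\LL(g\circ g_1)^*$, which is precisely the content of the coherence diagram \eqref{alt1b}. That these maps live in the derived categories of \emph{solid} modules (rather than merely condensed ones) is automatic since $\R\Gamma(X_{\FF,Y^\flat},-)$ of a quasi-coherent sheaf on an analytic adic space lands in $\sd(\B_{Y^\flat}^{\rm an})\subset\sd(\Q_p(Y)_\Box)$, already noted after \eqref{essen1a}.

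The main obstacle I expect is the careful verification that $\LL g_*\LL g^*\sg\simeq\sg\otimes^{\LL_\Box}_{\Q_p}\scc(T,\Q_p)$ genuinely as a \emph{derived} (not just underived) identity in ${\rm QCoh}(X_{\FF,Y^\flat})$, uniformly in $\sff$ — this is where flatness of $\scc(T,\Q_p)$ over $\Q_{p,\Box}$ and the exactness properties of the analytic-ring base change $\B_{Y,I}\mapsto\B_{Y,I}\wotimes_{\Q_p}\scc(T,\Q_p)$ must be invoked (as in \cite[Cor. 10.5.4]{PGS} and the solid projection formula of Clausen--Scholze), together with descent to pass from the affinoid pieces $Y_{\FF,S,[1,p]}$ to the quotient curve $X_{\FF,S}$. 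Once that comparison is in hand, assembling the coherences is routine bookkeeping with the base-change transitivity isomorphisms, exactly as in the proof of Lemma \ref{enriched1a} where the structure maps were seen to be induced by identity maps; the commutativity of the triangle with $(-)^{\rm cl}$ holds by construction, since evaluating the enriched structure at $T=*$ returns the original unenriched functor $\R\overline{\tau}_*$.
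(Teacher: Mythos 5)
There is a genuine gap, and it is exactly the one the paper flags in Remark \ref{essen1N}(1). Your construction produces, for each $\sff$, an assignment $Y\mapsto \R\Gamma(X_{\FF,Y^{\flat}},\LL f_Y^*\sff)$ in $\sd(\Q_p(Y)_{\Box})$ together with structure maps defined \emph{in the derived category} (via base change along $Y\times T\to Y$ and the projection formula), with naturality and the coherences of \eqref{alt1b} checked "by transitivity of base change", i.e.\ up to homotopy. But the target $\underline{\sd}(S,\Q_{p,\Box})_0$ is the derived $\infty$-category of the \emph{abelian} category of topologically enriched presheaves: an object of it is (represented by) an honest complex of strictly enriched presheaves, with structure maps that are genuine maps of complexes satisfying the composition and identity constraints on the nose. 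Derived-category-level structure maps with homotopy-coherent compatibilities only give an enriched $\infty$-presheaf (or an enrichment of the homotopy category), and passing from that to an object of $\underline{\sd}(S,\Q_{p,\Box})_0$ would require either the enriched $\infty$-categorical machinery the paper explicitly declines to use or a strictification argument you do not supply. Saying the coherences are "routine bookkeeping" does not close this; it is the whole point of the lemma.

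The paper's proof avoids the issue by rigidifying at the chain level: on the affinoid pieces $Y_{\FF,S^{\flat},I}$ it chooses K-projective (K-flat, flat-termwise) resolutions $\spp(\sff)$ whose terms are direct sums of the generators $\B^{\rm an}_{S^{\flat},I,\Box}[T]$, defines $\R\tau_{I,*}(\sff)$ as the literal complex of presheaves $(f\colon Y\to S)\mapsto\Gamma(Y_{\FF,Y^{\flat},I},f^*\spp(\sff))$, and endows it with \emph{strict} enriching maps via the underived maps \eqref{kol-def11} (the map $\delta_{Y,T}$, functorial in $M$ and $f$ on actual modules, not in a derived category); the full curve is then handled by the explicit mapping fiber \eqref{cone1} of $j^*-\phi^*$ between the interval functors, and compatibility with $(-)^{\rm cl}$ is immediate. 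Two smaller points: your structure map is stated as a tensor against $\underline{\Q}_p(Y)(T)=\scc(T,\Q_p(Y))$, whereas the enrichment datum \eqref{alt1} requires tensoring against $\Q_p[T]^{\Box}$ (equivalently, adjointly, a map into $\uHom(\Q_p[T]^{\Box},-)$ as in \eqref{essen21}); and the "counit sending $T$ to a point" is not the right contraction --- what is used is the evaluation map $\delta$ from $\scc(T,R)\otimes M$ to $\uHom(R^{\rm an}_{\Box}[T],M)$, which need not be invertible in general (it is so only for nuclear sheaves, cf.\ Remark \ref{essen1N}(2)), but the construction only needs the map itself, so your appeal to its invertibility is both unnecessary and unavailable for general quasi-coherent $\sff$.
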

\begin{remark} \label{essen1N} (1) The natural definition of the functor $\R\tau_*$ should proceed along the following lines. We take the formula from \eqref{essen1a}.  We need to explain why this formula can be upgraded to 
  define an  enriched presheaf. That is, we need to define  structure maps
\begin{equation}\label{essen21}
\underline{\sff}^{\prime}_{Y,T}: \quad \R\Gamma(X_{\FF,(Y\times T)^{\flat}},{\LL}f^*_{Y\times T}\sff)\to \R\uHom_{\Q_{p,\Box}}(\Q_p[T]^{\Box},\R\Gamma(X_{\FF,Y^{\flat}},{\LL}f^*_Y\sff)),
\end{equation}
for $Y\in {\rm sPerf}_S$ and a profinite set $T$. 
That this can be done  follows (working on the $Y_{\FF}$-curve and then taking $\phi$-eigenspaces) from the fact  that, for a stably  uniform analytic adic space ${\rm Spa}(R,R^+)$ over $C$ and  any $M\in \sd(R^{\rm an}_{\Box})$, we have 
\begin{align}\label{derived}
\R\uHom_{R^{\rm an}_{T,\Box}}(R^{\rm an}_{T,\Box}, &\, M\otimes^{\LL}_{R^{\rm an}_{\Box}}R^{\rm an}_{T,\Box})  \simeq \R\uHom_{R^{\rm an}_{T,\Box}}(R^{\rm an}_{T,\Box},\scc(T,R)\otimes^{\LL}_{R^{\rm an}_{\Box}} M)\\
 & \lomapr{\delta} \R\uHom_{R^{\rm an}_{T,\Box}}(R^{\rm an}_{T,\Box},\R\uHom_{R^{\rm an}_{\Box}}(R^{\rm an}_{\Box}[T],M))
 \simeq  \R\uHom_{R^{\rm an}_{\Box}}(R^{\rm an}_{\Box}[T], M)\notag \\
 & \simeq \R\uHom_{R^{\rm an}_{\Box}}(\Q_p[ T]^{\Box}\otimes^{\LL}_{\Q_{p,\Box}}R^{\rm an}_{\Box}, M),\notag
\end{align}
where we wrote  $(R_T,R_T^+)$ for the cordinate Huber pair of the product ${\rm Spa}(R,R^+)\times T$. 

 However this approach would require us to work in the context of enriched $\infty$-presheaves and check an infinite number of coherencies. We  would like to avoid this so we will describe  a more down to earth definition in the 
 proof below.
 
  (2)  If $\sff$ is nuclear, by \cite[Prop. 5.35]{And21}, the map $\delta$ above  is a quasi-isomorphism hence so are the structure maps \eqref{essen21}.  
\end{remark}
 \begin{proof}{\rm(Proof of Lemma \ref{essen1b})}  We start with defining the functors
 \begin{equation}\label{lato25}
 \R\tau_{I,*}: {\rm QCoh}(Y_{\FF,S^{\flat},I})\to \underline{\sd}(S,\Q_{p,\Box})_0, 
 \end{equation}
 for  intervals $I\subset (0,\infty)$ with rational endpoints. 
 For $\sff\in {\rm QCoh}(Y_{S^{\flat},I})$, we take a K-projective resolution $\spp(\sff)\stackrel{\sim}{\to}\sff$ such that all terms of $\spp(\sff)$ are direct sums of  projective generators from the set
 $\{\B^{\rm an}_{S^{\flat},I,\Box}[T]\}$, where the $T$'s are extremally disconnected profinite sets.
   We note that these resolutions are K-flat with flat terms. We define the presheaf
 \begin{equation}\label{def-kol}
  \R\tau_{I,*}(\sff):= \{(f:Y\to S) \mapsto \Gamma(Y_{\FF, Y^{\flat},I}, f^*\spp(\sff))\}.
 \end{equation}
 Its value on $Y\to S$ is in complexes of $\underline{\Q}_{p}(Y)_{\Box}$-modules.

   We claim that the complex of presheaves \eqref{def-kol} is naturally enriched. Indeed, for that we can just use a nonderived version of \eqref{derived} (we set  
 here $R^{\rm an}_{\Box}=(R,R^+)_{\Box}:=\B^{\rm an}_{Y^{\flat},I,\Box}$ to simplify the notation):
 for   any $M\in R^{\rm an}_{\Box}$, we have the enriching structure maps
\begin{align}\label{kol-def11}
\underline{M}^{\prime}_{Y,T}:\quad   M\otimes_{R^{\rm an}_{\Box}}R^{\rm an}_{T,\Box}  & \simeq \scc(T,R)\otimes_{R^{\rm an}_{\Box}} M
  \lomapr{\delta_{Y,T}} \uHom_{R^{\rm an}_{\Box}}(R^{\rm an}_{\Box}[T],M)\\
 & \simeq \uHom_{R^{\rm an}_{\Box}}(\Q_p[ T]^{\Box}\otimes_{\Q_{p,\Box}}R^{\rm an}_{\Box}, M).\notag
\end{align}
 These maps are clearly functorial in $M$ and $f$ giving us what we want. That is, we have defined  
the functor \eqref{lato25}.
 
   Set now $I=[1,p], I^{\prime}=[1,1]$.  Let $\sff\in {\rm QCoh}(X_{\FF,S^{\flat}})$ and identify it functorially with a pair $(\sff_I,\phi_\sff)$, where $\phi_\sff:\phi^*\sff_I\simeq j^*\sff_I$ is a quasi-isomorphism. Here $j: Y_{\FF,S^{\flat},I^{\prime}}\hookrightarrow Y_{\FF,S^{\flat},I}$ is the canonical open immersion and 
  $$\phi: Y_{\FF,S^{\flat},[1,1]}\hookrightarrow Y_{\FF,S^{\flat},[1/p,1]}  \lomapr{\phi} Y_{\FF,S^{\flat},[1,p]}$$ is  the canonical open immersion followed by Frobenius. Set $\sff_{I^{\prime}}:=j^*\sff_I$ and consider   the induced maps 
\begin{align*}
j^*:  \R\tau_{I,*}(\sff_I)\to  \R\tau_{I^{\prime},*}(\sff_{I^{\prime}}),\quad  \phi^*: \R\tau_{I,*}(\sff_I)\to  \R\tau_{I^{\prime},*}(\sff_{I^{\prime}}).
\end{align*}
They are easily seen to be in $\underline{\sd}(S,\Q_{p,\Box})_0$. 
 Hence we get an object in $\underline{\sd}(S,\Q_{p,\Box})_0$
\begin{equation}\label{cone1}
 \R\tau_{*}(\sff):=[ \R\tau_{I,*}(\sff_I)\lomapr{j^*-\phi^*} \R\tau_{I^{\prime},*}(\sff_{I^{\prime}})]
\end{equation}
 yielding a functor \eqref{essen1} wanted in the lemma.
Also, we clearly have a natural transformation
$$
 \R\tau_{*}(-)^{\rm cl}\simeq  \R\overline{\tau}_{*}(-),
$$
as wanted. 
 \end{proof}
\subsubsection{Topological vs algebraic projection functors}
    Let $S\in {\rm sPerf}_C$. Consider the morphism of sites
$$
\tau': (X_{\FF, S^{\flat}})_{\proeet}\to S_{\proeet}. 
$$   
induced by sending $Y\in {\rm sPerf}_{S}$ to $X_{\FF,Y^{\flat}}$. 
 It yields the pushforward functor
\begin{align}\label{simons1}
& \R\tau^{\prime}_*: {\rm QCoh}(X_{\FF,S^{\flat}})\to \sd(S_{\proeet},{\Q}_p),\quad \sff\mapsto \{Y\to \R\Gamma(X_{\FF,Y^{\flat}},{\LL}f^*\sff)| f:Y\to S\}.
\end{align}
Here the  cohomology is seen in the derived $\infty$-category $\sd(\Q_{p}(Y))$  of $\Q_{p}(Y)$-modules.
 $ \R\tau^{\prime}_*(\sff)$ is a sheaf:  use the pro-\'etale descent for period sheaves.

 We will need the following fact: 
  \begin{lemma}\label{essen-cafe1} The following diagram commutes
$$
\xymatrix{
 {\rm Nuc}(X_{\FF,S^{\flat}})\ar[r]^-{\R\tau^{\prime}_*}\ar[rd]^{\R\tau_*} &  \sd(S_{\proeet},{\Q}_p)\ar[d]^{\R\pi_*}\\
  & \underline{\sd}(S,{\Q}_p)_0.
  }
  $$
 \end{lemma}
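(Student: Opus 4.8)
The plan is to compare the two composite functors by evaluating on a perfectoid $Y\to S$ and a profinite set $T$, so that the identification becomes a statement about cohomology of a quasi-coherent sheaf on a Fargues--Fontaine curve after a base change by $T$. First I would unwind the three functors involved. By the Remark preceding Section~\ref{cond-kol1}, for a pro-\'etale sheaf $\sg$ on $S_{\proeet}$ one has $\R\Gamma(Y^{\rm top}_{\proeet},\R\pi_*\sg)(T)\simeq\R\Gamma_{\proeet}(Y\times T,\sg)$; applying this with $\sg=\R\tau'_*\sff$ gives that $(\R\pi_*\R\tau'_*\sff)(Y)(T)\simeq \R\Gamma\big(X_{\FF,(Y\times T)^\flat},\LL f^*_{Y\times T}\sff\big)$, using the description of $\R\tau'_*$ in \eqref{simons1} and pro-\'etale descent for the curve along $Y\times T\to Y$. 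On the other side, by the construction \eqref{def-kol}--\eqref{cone1} of $\R\tau_*$ in the proof of Lemma~\ref{essen1b}, the value $(\R\tau_*\sff)(Y)(T)$ is computed as the $\phi$-eigen-complex built from $\Gamma(Y_{\FF,Y^\flat,I},f^*\spp(\sff))\otimes$-type data enriched by the structure maps \eqref{kol-def11}, and evaluating the enriched presheaf on $T$ amounts to applying $\R\uHom_{\Q_{p,\Box}}(\Q_p[T]^{\Box},-)$ to $(\R\tau_*\sff)(Y)(*)=\R\Gamma(X_{\FF,Y^\flat},\LL f^*\sff)$. So concretely I must produce, compatibly in $Y$, $T$, and $\sff$, a quasi-isomorphism
$$
\R\Gamma\big(X_{\FF,(Y\times T)^\flat},\LL f^*_{Y\times T}\sff\big)\;\xrightarrow{\ \sim\ }\;\R\uHom_{\Q_{p,\Box}}\big(\Q_p[T]^{\Box},\R\Gamma(X_{\FF,Y^\flat},\LL f^*_Y\sff)\big),
$$
and then check that under these identifications the enriched structure maps on both sides agree.

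The core computation is exactly the chain \eqref{derived} from Remark~\ref{essen1N}(1), carried out on each $Y_{\FF}$-annulus $Y_{\FF,Y^\flat,I}$ for $I\in\{[1,p],[1,1]\}$ and then descended to $X_{\FF}$ by passing to $\phi$-eigenspaces via the cone presentation \eqref{cone1}. Writing $R^{\rm an}_\Box=\B^{\rm an}_{Y^\flat,I,\Box}$ and $R^{\rm an}_{T,\Box}$ for the coordinate ring of the $T$-base change, one has $R^{\rm an}_{T,\Box}\simeq \scc(T,R)\otimes^{\LL}_{R^{\rm an}_\Box}R^{\rm an}_\Box$ by \cite[Cor.~10.5.4]{PGS} (or the relevant computation in \cite{And21}), and the key point is that the map
$$
\delta:\ \scc(T,R)\otimes^{\LL}_{R^{\rm an}_\Box}M\ \longrightarrow\ \R\uHom_{R^{\rm an}_\Box}(R^{\rm an}_\Box[T],M)
$$
is a quasi-isomorphism when $M$ is nuclear, by \cite[Prop.~5.35]{And21} (Remark~\ref{essen1N}(2)); the hypothesis $\sff\in{\rm Nuc}(X_{\FF,S^\flat})$ ensures $M=\LL f^*\sff|_I$ is nuclear on each annulus, since pullback preserves nuclearity and nuclearity is local on the curve. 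Combining $\delta$ with the base-change identity $\R\Gamma(Y_{\FF,(Y\times T)^\flat,I},-)\simeq \R\Gamma(Y_{\FF,Y^\flat,I},-)\otimes^{\LL}_{R^{\rm an}_\Box}R^{\rm an}_{T,\Box}$ and the adjunction $\R\uHom_{R^{\rm an}_{T,\Box}}(R^{\rm an}_{T,\Box},-)\simeq\R\uHom_{R^{\rm an}_\Box}(R^{\rm an}_\Box,-)$ applied to the $T$-relative module gives the desired quasi-isomorphism on each annulus, compatibly with $j^*$ and $\phi^*$, hence on the cone \eqref{cone1}. This produces the comparison $(\R\pi_*\R\tau'_*\sff)(Y)(T)\simeq(\R\tau_*\sff)(Y)(T)$, and by construction the maps $\delta_{Y,T}$ used here are precisely the ones defining both the enriched structure of $\R\tau_*\sff$ (via \eqref{kol-def11}) and, after the Remark's identification, the enriched structure that $\R\pi_*$ puts on $\R\tau'_*\sff$ (the latter being induced by identity maps as in the proof of Lemma~\ref{enriched1a}); so the comparison is automatically a map in $\underline{\sd}(S,\Q_p)_0$, not merely objectwise.

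The main obstacle I anticipate is not the annulus-level computation but the bookkeeping of \emph{compatibilities}: one must verify that the quasi-isomorphisms are functorial in $\sff$ in a way compatible with the $\phi$-eigenspace/cone formation, with restriction maps along $f:Y'\to Y$ in $S_{\proeet}$, \emph{and} with the enriched structure maps in $T$ simultaneously, so that the whole thing glues to a natural transformation of enriched presheaves and descends from presheaves to sheaves (the $T$-direction sheafification entering $\pi^{\rm psh}_*$ must be matched on both sides). A secondary point requiring care is that $\R\tau'_*\sff$ lands in $\sd(S_{\proeet},\Q_p)$ with values in ordinary $\Q_p(Y)$-modules while $\R\tau_*\sff$ records solid structure; one checks these are compatible under the forgetful functor, using that nuclear modules and their cohomology over the curve are naturally solid and that $\R\pi_*$ of an ordinary $\Q_p$-sheaf acquires its condensed/solid structure exactly through the profinite-$T$ evaluations, which is the content of the Remark. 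Once these compatibilities are in place the lemma follows; I would organize the write-up so that the annulus computation is stated once as a sublemma (``for nuclear $M$ on $Y_{\FF,Y^\flat,I}$, base change and internal Hom along $T$ agree, naturally in everything''), and then the proof of Lemma~\ref{essen-cafe1} is a formal assembly via \eqref{cone1} and the Remark in Section~\ref{cond-kol1}.
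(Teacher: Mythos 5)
Your plan follows the paper's own argument: both compare the two composites via the structure maps $\delta_{Y,T}$ of Remark \ref{essen1N}/\eqref{kol-def11} on the annuli $Y_{\FF,Y^\flat,I}$, invoke \cite[Prop.~5.35]{And21} to see these are quasi-isomorphisms on nuclear sheaves, assemble along the cone presentation \eqref{cone1}, and then verify compatibility with the enrichment (which the paper settles by the identity $\delta_{Y,T}\delta_{Y\times T,T'}=\delta_{Y,T\times T'}$ under $\scc(T',R_{Y\times T})\simeq\scc(T'\times T,R_Y)$ — exactly the bookkeeping you flag). So the proposal is correct and takes essentially the same route as the paper.
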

 \begin{proof}  We have the functors from $ {\rm QCoh}(X_{\FF,S^{\flat}})$ to ${\sd}(S,{\Q}_p)$ given by sending $\sff\in {\rm QCoh}(X_{\FF,S^{\flat}})$ to the topological  presheaves:
 \begin{align*}
& (\R\pi_* \R\tau^{\prime}_*)^{\rm cl}\sff: \{(Y,T)\mapsto \R\Gamma(X_{\FF,(Y\times T)^{\flat}},{\LL}f^*_{Y\times T}\sff)\},\\
& \R\overline{\tau}_*\sff: \{(Y,T)\mapsto \R\Hom_{\Q_p}(\Q_p[T],\R\Gamma(X_{\FF,Y^{\flat}},{\LL}f^*_{Y}\sff))\}.
 \end{align*}
 Moreover, we  have  a natural transformation 
 \begin{equation}\label{twarog1}
  (\R\pi_* \R\tau^{\prime}_*)^{\rm cl}\to  \R\overline{\tau}_*
 \end{equation}
 given by analytic descent from the structure morphisms  $\underline{\sff}^{\prime}_{Y,T}$ described in \eqref{essen21}. By Remark \ref{essen1N},  they are quasi-isomorphisms when restricted to ${\rm Nuc}(X_{\FF,S^{\flat}})$.
 
      Both sides of \eqref{twarog1} are enriched and it suffices now to show that the natural transformation \eqref{twarog1} preserves this enrichment. That is, that it lifts to a natural transformation
$$  \R\pi_* \R\tau^{\prime}_*\to  \R{\tau}_*
 $$
 For that we can pass via the mapping fiber \eqref{cone1} and its algebraic analog to showing that the natural transformation
 $$
( \R\pi_* \R\tau^{\prime}_{I,*})^{\rm cl}\to  \R\overline{\tau}_{I,*}
 $$
 is strictly compatible with enrichment on the level of  a projective module $M\in R^{\rm an}_{\Box}=(R_Y,R^+_{Y})^{\rm an}_{\Box}$. (We use the notation from \eqref{kol-def11}.) After unwiding the definitions, this boils down to the fact that we have the identifications 
 $$\scc(T^{\prime},R_{Y\times T})\simeq \scc(T^{\prime},\scc(T, R_{Y}))\simeq\scc(T^{\prime}\times T,R_{Y})$$ 
 and, via them,  we have $$\delta_{Y,T}\delta_{Y\times T,T^{\prime}}=\delta_{Y,T\times T^{\prime}}.$$
 \end{proof}

\subsubsection{Topological fully-faithfulness for perfect complexes}

  Let $S\in {\rm sPerf}_C$. For  $\sff_1,\sff_2\in {\rm QCoh}(X_{\FF, S^{\flat}})$, we have a natural map in $\underline{\sd}(S,\Q_{p,\Box})$:
\begin{equation}\label{maybe1-52}
\R\tau_*\R\Hhom_{{\rm QCoh}(X_{\FF,S^{\flat}})}(\sff_1,\sff_2)\to \R\Hhom^{\Box}_{S,\Q_{p}}(\R\tau_*\sff_1,\R\tau_*\sff_2),
\end{equation}
which is  constructed in the usual way: 
By adjointness, it suffices to construct a map
$$
\R\tau_*\R\Hhom_{{\rm QCoh}(X_{\FF,S^{\flat}})}(\sff_1,\sff_2)\otimes^{\LL_{\Box}}_{{\Q}_p}\R\tau_*\sff_1\to \R\tau_*\sff_2.
$$
For this, we use the composition 
$$
\R\tau_*\R\Hhom_{{\rm QCoh}(X_{\FF,S^{\flat}})}(\sff_1,\sff_2)\otimes^{\LL_{\Box}}_{{\Q}_p}\R\tau_*\sff_1\to \R\tau_*(\R\Hhom_{{\rm QCoh}(X_{\FF,S^{\flat}})}(\sff_1,\sff_2)\otimes^{\LL}_{\so}\sff_1)\to\R\tau_*\sff_2,
$$
where  the second arrow is $\R\tau_*$ applied to the canonical map
$$
\R\Hhom_{{\rm QCoh}(X_{\FF,S^{\flat}})}(\sff_1,\sff_2)\otimes^{\LL}_{\so}\sff_1\to \sff_2
$$
and the first arrow is the relative cup product defined in the following way. For  $\sg_1,\sg_2\in {\rm QCoh}(X_{\FF, S^{\flat}})$,  the relative cup product map in $\underline{\sd}(S,\Q_{p,\Box})$
\begin{equation}\label{maybe3}
\R\tau_*\sg_1\otimes^{\LL_{\Box}}_{{\Q}_p}\R\tau_*\sg_2\to \R\tau_*(\sg_1\otimes^{\LL}_{\so}\sg_2)
\end{equation}
is induced by the functorial maps  in $\underline{\sd}(S,\Q_{p,\Box})$:
\begin{equation*}
\R\tau_{I,*}\sg_1\otimes^{\LL_{\Box}}_{{\Q}_p}\R\tau_{I,*}\sg_2\to \R\tau_{I,*}(\sg_1\otimes^{\LL}_{\so}\sg_2), 
\end{equation*}
where $\R\tau_{I,*}$ is the functor from \eqref{lato25}. And the latter comes from the 
identifications
$$
f_Y^*\sg_1\otimes_{\so} f_Y^*\sg_2 \stackrel{\sim}{\to} f_Y^*(\sg_1\otimes_{\so} \sg_2)
$$
on the level of $K$-projective resolutions of the type considered in the proof of Lemma \ref{essen1b}.

    The following result is a topological version of  \cite[Cor. 3.10]{AnLB}, which proved fully-faithfulness for $\R\tau^{\prime}_*$ and follows fromTheorem \ref{duck1}.
\begin{corollary}\label{lebrasII}
Let $S\in {\rm sPerf}_C$.  The functor 
$$
\R\tau_*: {\rm Perf}(X_{\FF, S^{\flat}})\to \underline{\sd}(S,\Q_{p,\Box})
$$
is fully faithful. That is, for $\sff_1,\sff_2\in {\rm Perf}(X_{\FF, S^{\flat}})$, the natural morphism in $\underline{\sd}(S,\Q_{p,\Box})$
$$\R\tau_*\R\Hhom_{{\rm QCoh}(X_{\FF,S^{\flat}})}(\sff_1,\sff_2)\to \R\Hhom^{\Box}_{S^{\rm top},\Q_{p}}(\R\tau_*\sff_1,\R\tau_*\sff_2)
$$
is a quasi-isomorphism. In particular,  the natural map in ${\sd}(\Q_p(S)_{\Box})$
$$
\R\Hom_{{\rm QCoh}(X_{\FF,S^{\flat}})}(\sff_1,\sff_2)\to \R\uHom^{\Box}_{S,\Q_{p}}(\R\tau_*\sff_1,\R\tau_*\sff_2)
$$
is a quasi-isomorphism. 
\end{corollary}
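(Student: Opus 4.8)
The plan is to reduce the claim for $\R\tau_*$ (topological, enriched) to the analogous result for $\R\tau_*'$ (algebraic, pro-\'etale), which is \cite[Cor.\ 3.10]{AnLB}, using Theorem \ref{duck1} (enriched fully-faithfulness) together with Lemma \ref{essen-cafe1} (compatibility of the two projection functors on nuclear sheaves). First I would observe that $\R\tau_*'$ lands in $\sd(S_{\proeet},\Q_p)$ and, by \cite[Cor.\ 3.10]{AnLB}, for $\sff_1,\sff_2\in{\rm Perf}(X_{\FF,S^\flat})$ the natural map
$$
\R\Hom_{{\rm QCoh}(X_{\FF,S^{\flat}})}(\sff_1,\sff_2)\to \R\uHom_{S,\Q_p}(\R\tau'_*\sff_1,\R\tau'_*\sff_2)
$$
is a quasi-isomorphism, and moreover this holds after base change, so that in fact $\R\tau'_*$ induces a quasi-isomorphism on internal $\R\Hhom$'s: $\R\tau'_*\R\Hhom_{{\rm QCoh}}(\sff_1,\sff_2)\stackrel{\sim}{\to}\R\Hhom_{S,\Q_p}(\R\tau'_*\sff_1,\R\tau'_*\sff_2)$ in $\sd(S_{\proeet},\Q_p)$, because $\R\Hhom_{{\rm QCoh}}(\sff_1,\sff_2)$ is again perfect (duals of perfect complexes are perfect) and $\R\tau'_*$ commutes with the relevant base change along $Y\to S$.

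Next I would apply $\R\pi_*$ to this quasi-isomorphism and use Lemma \ref{essen-cafe1}: since perfect complexes are nuclear, $\R\pi_*\R\tau'_*\simeq\R\tau_*$ on ${\rm Perf}(X_{\FF,S^\flat})$, so we obtain
$$
\R\tau_*\R\Hhom_{{\rm QCoh}}(\sff_1,\sff_2)\simeq \R\pi_*\R\Hhom_{S,\Q_p}(\R\tau'_*\sff_1,\R\tau'_*\sff_2).
$$
Now I invoke Theorem \ref{duck1} with $\sff=\R\tau'_*\sff_1$ and $\sg=\R\tau'_*\sff_2$: one checks $\R\pi_*\R\tau'_*\sff_1=\R\tau_*\sff_1\in\underline{\sd}^b(S,\Q_p)$ has finite amplitude (it is computed by $\R\Gamma$ on the Fargues--Fontaine curve of a perfect complex, which has uniformly bounded amplitude over all $Y$; this is precisely the finiteness hypothesis needed), and $\R\tau'_*\sff_2\in\sd^+(S_{\proeet},\Q_p)$. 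The theorem then gives
$$
\R\pi_*\R\Hhom_{S,\Q_p}(\R\tau'_*\sff_1,\R\tau'_*\sff_2)\stackrel{\sim}{\to}\R\Hhom_{S^{\rm top},\Q_p}(\R\pi_*\R\tau'_*\sff_1,\R\pi_*\R\tau'_*\sff_2)=\R\Hhom_{S^{\rm top},\Q_p}(\R\tau_*\sff_1,\R\tau_*\sff_2),
$$
and by Proposition \ref{nyear1b}(4) this last object agrees with $\R\Hhom^\Box_{S,\Q_p}(\R\tau_*\sff_1,\R\tau_*\sff_2)$ since the $\R\tau_*\sff_i$ are solid. Composing the displayed identifications yields that \eqref{maybe1-52} is a quasi-isomorphism; one then checks this composite agrees with the map \eqref{maybe1-52} by tracing through the cup-product constructions (both are induced by the same pairing $\R\Hhom(\sff_1,\sff_2)\otimes^{\LL}\sff_1\to\sff_2$ on the curve), and taking global sections $\R\Gamma(S^{\rm top}_{\proeet},-)$ gives the statement about $\R\Hom$ in $\sd(\Q_p(S)_\Box)$.

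The main obstacle I anticipate is the bookkeeping in the reduction from the \emph{internal} $\R\Hhom$ statement to the \emph{external} $\R\Hom$ statement of \cite[Cor.\ 3.10]{AnLB}, i.e.\ upgrading Ansch\"utz--Le Bras's fully-faithfulness to a statement about internal Homs compatible with base change along all $Y\to S$ in ${\rm sPerf}_S$; this requires knowing that $\R\tau'_*$ commutes with $\LL f_Y^*$ (flat base change on the curve) and that the formation of $\R\Hhom_{{\rm QCoh}}$ of perfect complexes commutes with $\LL f_Y^*$ as well — both standard but needing care. A secondary point is verifying the amplitude bound for $\R\tau_*\sff_1$ uniformly in $Y$, which follows because a perfect complex on $X_{\FF,S^\flat}$ has Tor-amplitude in some fixed $[a,b]$ and $\R\Gamma(X_{\FF,Y^\flat},-)$ has cohomological dimension $\le 1$ (the curve being "one-dimensional" in the relevant sense), so $\R\tau_*\sff_1\in\underline{\sd}^{[a,b+1]}$. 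Finally, one must confirm the two constructions of the comparison map coincide; this is routine but tedious diagram-chasing through the definitions of the relative cup products in Remark \ref{maybe10} and in \eqref{maybe3}.
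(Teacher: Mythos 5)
Your overall skeleton coincides with the paper's: both factor the global comparison map through the quasi-isomorphism of Ansch\"utz--Le Bras \cite[Cor.~3.10]{AnLB} applied to the internal $\R\Hhom$ of perfect complexes, and both use Lemma \ref{essen-cafe1} (nuclearity of perfect complexes) to identify $\R\pi_*\R\tau'_*$ with $\R\tau_*$ in the target. Where you genuinely diverge is in the proof that the remaining map \eqref{maybe3-52} is a quasi-isomorphism: you apply Theorem \ref{duck1} \emph{directly} to $\sff=\R\tau'_*\sff_1$, $\sg=\R\tau'_*\sff_2$, verifying the hypothesis $\R\pi_*\sff\in\underline{\sd}^b$ via the uniform amplitude bound coming from the cohomological dimension $\le 1$ of the relative Fargues--Fontaine curve together with Lemma \ref{essen-cafe1}; the paper instead performs a geometric d\'evissage --- strictly perfect complexes (by \cite[Prop.~2.6]{AnLB}) reduce to vector bundles, then to line bundles via \cite[Prop.~II.3.1, Cor.~II.2.20]{FS} and \cite[Prop.~8.4.7]{KL15} (triviality of $\Q_p$-local systems on the strictly totally disconnected $S$), then via the Euler sequence and $0\to\so\to\so(1)\to i_{\infty,*}R_S\to 0$ down to $\se\in\{\so,\,i_{\infty,*}R_S\}$ --- so that only the $\pi_*$-acyclic sheaves $\underline{\Q}_p$ and ${\mathbb G}_a$ appear and Corollary \ref{duck2} applies immediately. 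Your route is shorter and more uniform, at the cost of carrying the burden you correctly flag: the uniform boundedness check needed for Theorem \ref{duck1} (which, note, already rests on nuclearity of perfect complexes through Lemma \ref{essen-cafe1}) and the identification of your composite with the canonical map \eqref{maybe1-52}; the latter compatibility is of the same nature as what the paper's own factorization implicitly requires, so it is not an extra gap. The paper's d\'evissage buys the explicit identifications $\R\tau'_*\so\simeq\underline{\Q}_p$, $\R\tau'_*(i_{\infty,*}R_S)\simeq{\mathbb G}_a$ (reused in Example \ref{simple1}) and avoids invoking the bounded-amplitude hypothesis of Theorem \ref{duck1} for the full complexes, at the price of the extra geometric input from \cite{FS} and \cite{KL15}. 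I see no error in your argument; both verifications you single out (the amplitude bound and the cup-product compatibility) are indeed the points that need to be written out carefully.
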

\begin{proof}  We may pass to the condensed setting, that is, we want to show that, for  $\sff_1, \sff_2 \in {\rm Perf}(X_{\FF, S^{\flat}})$,   the canonical map   (we note that $\R\Hhom_{{\rm QCoh}(X_{\FF,S^{\flat}})}(\sff_1,\sff_2)\in {\rm Perf}(X_{\FF, S^{\flat}})$):
$$
\R\tau_*\R\Hhom_{{\rm QCoh}(X_{\FF,S^{\flat}})}(\sff_1,\sff_2)\to \R\Hhom_{S^{\rm top},\Q_{p}}(\R\tau_*\sff_1,\R\tau_*\sff_2)
$$
is a quasi-isomorphism in $\underline{\sd}(S,\Q_{p,\Box})$.

  For that, consider its factorization:
  \begin{align*}
\R\pi_*\R\tau^{\prime}_*\R\Hhom_{{\rm QCoh}(X_{\FF,S^{\flat}})}(\sff_1,\sff_2) &\stackrel{\sim}{ \to }
\R\pi_*\R\Hhom_{S,\Q_{p}}(\R\tau^{\prime}_*\sff_1,\R\tau^{\prime}_*\sff_2)\\
 & \to 
\R\Hhom_{S^{\rm top},\Q_{p}}(\R\tau_*\sff_1,\R\tau_*\sff_2).
\end{align*}
The first morphism is a quasi-isomorphism by \cite[Cor. 3.10]{AnLB}. It remains  to show that the map
\begin{equation}\label{maybe3-52}
\R\pi_*\R\Hhom_{S,\Q_{p}}(\R\tau^{\prime}_*\sff_1,\R\tau^{\prime}_*\sff_2)
  \to 
\R\Hhom_{S^{\rm top},\Q_{p}}(\R\tau_*\sff_1,\R\tau_*\sff_2)
\end{equation}
is a quasi-isomorphism. 

   By \cite[Prop. 2.6]{AnLB}, both $ \sff_1$ and $\sff_2$ are strictly perfect. Hence, by d\'evissage, 
we may assume that they are both vector bundles
on $X_{\FF, S^{\flat}}$.  We can in fact assume that both are line bundles:   If $\se$ is a vector bundle on $X_{\FF, S^{\flat}}$, up to \'etale  localization  on
$S$ (which we are allowed to do), applying  \cite[Prop. II.3.1] {FS} to  a twist of $\se$ and   using \cite[Cor.
II.2.20]{FS}), we may assume that $\se\in \{\sll\otimes^{\LL}_{\Q_p}\so(s)\}$, for  a pro-\'etale $\Q_p$-local system $\sll$ on $S^{\flat}$ and $s\in\Z$. 
By \cite[Prop. 8.4.7]{KL15}, we may assume that $\sll$ admits a $\Z_p$-lattice hence, by the assumption on $S$, is trivial.

    Thus we have $\se\in\{\so(s)\}$, $s\in\Z$. Then we can  reduce to  $\se\in \{\so ,\so (1)\}$ by  using twists of the 
 Euler sequence
$$
0\to \so\to \so(1)^{2}\to \so(2)\to 0
$$
and then,  by using  the exact
sequence
$$
0 \to  \so  \to  \so (1) \to i_{\infty,*}R_S \to  0,
$$
  we may  assume that $\se\in  \{\so ,i_{\infty,*}R_S\}.$ 
 
     We claim that 
$$\R\tau^{\prime}_*(\so)\simeq  \Q_p,\quad \R\tau^{\prime}_*(i_{\infty,*}R_S) = {\mathbb G}_a.
$$
Indeed, the second quasi-isomorphism is clear. The first one  follows from \cite[Prop. II.2.5 (ii)]{FS}, which yields that $\R\tau^{\prime}_*(\so)\simeq \R\Gamma_{\proeet}(Y\times T, \Q_p)$ and the fact that
$\R\Gamma_{\proeet}(Y\times T, \Q_p)\simeq \underline{\Q}_p(Y\times T)$. It suffices now to 
invoke Corollary \ref{duck2} to finish the proof of our corollary.
\end{proof}
\begin{example} \label{simple1} Recall that, by \cite[Prop. II.2.5]{FS}, we have 
 the following quasi-isomorphisms in ${{\rm QCoh}(X_{\FF,S^{\flat}})}$:
\begin{align*}
&  \R\Hhom_{{\rm QCoh}(X_{\FF,S^{\flat}})} (\so ,\so  )   \simeq \so,\\
& \R\Hhom_{{\rm QCoh}(X_{\FF,S^{\flat}})} (\so  ,i_{\infty,*}R_S )  \simeq i_{\infty,*}R_S,\\
& \R\Hhom_{{\rm QCoh}(X_{\FF,S^{\flat}})} (i_{\infty,*}R_S, i_{\infty,*}R_S )   \simeq i_{\infty,*}R_S \oplus i_{\infty,*}R_S[-1],\\
&  \R\Hhom_{{\rm QCoh}(X_{\FF,S^{\flat}})} (i_{\infty,*}R_S,\so  )  \simeq  i_{\infty,*}R_S(-1)[-1].
\end{align*}
Via Corollary \ref{lebrasII}, this can be transferred to the following quasi-isomorphisms in $\underline{\sd}(S,\Q_{p,\Box})$:
\begin{align*}
& \R\Hhom^{\Box}_{S,\Q_p}(\Q_p,\Q_p)  \stackrel{\sim}{\leftarrow}\Q_p,\quad \R\Hhom^{\Box}_{S,\Q_p}(\Q_p,{\mathbb G}_a)\stackrel{\sim}{\leftarrow}{\mathbb G}_a,\\
&  \R\Hhom^{\Box}_{S,\Q_p}({\mathbb G}_a,{\mathbb G}_a)  \simeq{\mathbb G}_a\oplus{\mathbb G}_a[-1],\quad 
  \R\Hhom^{\Box}_{S,\Q_p}({\mathbb G}_a,\Q_p)  \simeq{\mathbb G}_a(-1)[-1].
\end{align*}
\end{example}

\end{document}